\newcounter{mt}
\newtheorem*{MainTheorem}{Main theorem}
\newtheorem{Proposition}{Proposition}[section]
\newtheorem{Definition}[Proposition]{Definition}
\newtheorem{Lemma}[Proposition]{Lemma}
\newtheorem{Theorem}[Proposition]{Theorem}
\newtheorem{Corollary}[Proposition]{Corollary}
\newtheorem{Remark}[Proposition]{Remark}
\DeclareMathOperator{\Val}{Val}
\DeclareMathOperator{\nc}{nc}
\DeclareMathOperator{\Gr}{Gr}
\DeclareMathOperator{\Geod}{Geod}
\DeclareMathOperator{\Jac}{Jac}
\DeclareMathOperator{\glob}{glob}
\DeclareMathOperator{\id}{Id}
\DeclareMathOperator{\tr}{tr}
\renewcommand{\Re}{\mathrm {Re}}
\newcommand{\p}{\mathbb{P}}
\DeclareMathOperator{\vol}{vol}
\DeclareMathOperator{\Dens}{Dens}
\DeclareMathOperator{\Span}{Span}
\DeclareMathOperator{\Image}{Image}
\DeclareMathOperator{\Supp}{Supp}
\DeclareMathOperator{\Hom}{Hom}
\DeclareMathOperator{\Stab}{Stab}
\DeclareMathOperator{\Sym}{Sym}
\DeclareMathOperator{\Ker}{Ker}
\DeclareMathOperator{\Cr}{Cr}
\DeclareMathOperator{\WF}{WF}
\DeclareMathOperator{\LC}{LC}
\DeclareMathOperator{\GL}{GL}
\DeclareMathOperator{\OO}{O}
\DeclareMathOperator{\SO}{SO}
\DeclareMathOperator{\sign}{sign}
\DeclareMathOperator{\Isom}{Isom}
\newcommand{\bi}{\mathbf i}
\newcommand{\R}{\mathbb{R}}
\newcommand{\C}{\mathbb{C}}
\def\moverlay{\mathpalette\mov@rlay}
\def\mov@rlay#1#2{\leavevmode\vtop{%
		\baselineskip\z@skip \lineskiplimit-\maxdimen
		\ialign{\hfil$\m@th#1##$\hfil\cr#2\crcr}}}
\newcommand{\charfusion}[3][\mathord]{
	#1{\ifx#1\mathop\vphantom{#2}\fi
		\mathpalette\mov@rlay{#2\cr#3}
	}
	\ifx#1\mathop\expandafter\displaylimits\fi}
\newcommand{\largewedge}{\mbox{\Large $\wedge$}}
\def\note#1{\ifvmode\leavevmode\fi\vadjust{\vbox to0pt{\vss
			\hbox to 0pt{\hskip\hsize\hskip1em
				\vbox{\hsize3.5cm\small\raggedright\pretolerance10000
					\noindent #1\hfill}\hss}\vbox to8pt{\vfil}\vss}}}
\date{}
\begin{document}
	
	\title{Crofton formulas in pseudo-Riemannian space forms}
	
	\author{Andreas Bernig}
	\author{Dmitry Faifman}
	\author{Gil Solanes}
	
	\email{bernig@math.uni-frankfurt.de}
	\email{faifmand@tauex.tau.ac.il} 
	\email{solanes@mat.uab.cat}
	\address{Institut f\"ur Mathematik, Goethe-Universit\"at Frankfurt,
		Robert-Mayer-Str. 10, 60629 Frankfurt, Germany}
	\address{School of Mathematical Sciences, Tel Aviv University, Tel Aviv 6997801, Israel}
	\address{Departament de Matem\`atiques, Universitat Aut\`onoma de Barcelona, 08193 Bellaterra, Spain, and \newline\indent Centre de Recerca Matem\`atica, Campus de Bellaterra, 08193 Bellaterra, Spain }

	\date{}

	\begin{abstract}
		Crofton formulas on simply-connected Riemannian space forms allow to compute the volumes, or more generally the Lipschitz-Killing curvature integrals of a submanifold with corners, by integrating the Euler characteristic of its intersection with all geodesic submanifolds. We develop a framework of Crofton formulas with distributions replacing measures, which has in its core Alesker's Radon transform on valuations. We then apply this framework, and our recent Hadwiger-type classification, to compute explicit Crofton formulas for all isometry-invariant valuations on all pseudospheres, pseudo-Euclidean and pseudohyperbolic spaces. We find that, in essence, a single measure which depends analytically on the metric, gives rise to all those Crofton formulas through its distributional boundary values at parts of the boundary corresponding to the different indefinite signatures. In particular, the Crofton formulas we obtain are formally independent of signature.
	\end{abstract}
	
	\thanks{{\it MSC classification}:  53C65, %Integral geometry [See also 52A22, 60D05]; differential forms, currents, etc. [See mainly 58Axx] 
		53B30, 53C50, %Lorentz manifolds, manifolds with indefinite metrics
		52A22,  %Random convex sets and integral geometry (aspects of convex geometry)
		52A39  	%Mixed volumes and related topics in convex geometry
		\\ A.B. was supported
		by DFG grant BE 2484/5-2\\
		D.F. was supported by ISF Grant 1750/20.
		\\
		G.S. was supported by FEDER/MICINN grant PGC2018-095998-B-I00 and the Serra H\'unter Programme}
	\maketitle
	%-------------------------------------------------------------------------
	
	\tableofcontents
	
	\section{Introduction}

	\subsection{Crofton formulas}
	
	The classical Crofton formula computes the length of a rectifiable curve $\gamma$ in $\R^2$ as 
	\begin{equation} \label{eq_crofton_2dim}
	\mathrm{Length}(\gamma)=\frac{\pi}{2} \int_{\overline{\Gr}_1(\R^2)} \#(\gamma \cap \overline L) d\overline L,
	\end{equation} 
	where $\overline{\Gr}_1(\R^2)$ is the space of lines in $\R^2$ with a rigid motion invariant measure (which is normalized in a suitable way). 
	
	A higher dimensional version states that for $M \subset \R^n$ a compact submanifold with boundary, we have 
	\begin{displaymath}
	\mu_k(M)=c_{n,k} \int_{\overline{\Gr}_{n-k}(\R^n)} \chi(M \cap \overline E)d\overline E,
	\end{displaymath}
	where $\overline{\Gr}_{n-k}(\R^n)$ is the Grassmann manifold of affine $(n-k)$-planes equipped with a rigid motion invariant measure, $\chi$ is the Euler characteristic, and $\mu_k(M)$ is the $k$-th intrinsic volume of $M$, which can be defined via Weyl's tube formula \cite{weyl_tubes}. The same formula also holds with the submanifold $M$ replaced by a compact convex body $K$, in which case the $k$-th intrinsic volume can be defined via Steiner's tube formula \cite{steiner}. We refer to \cite{klain_rota, schneider_book14} for more information about intrinsic volumes of convex bodies.
	
	More generally, we can take an arbitrary translation-invariant measure $m$ on $\overline{\Gr_{n-k}}(\R^n)$ and consider the integral
	\begin{displaymath}
	\mu(K):=\int_{\overline{\Gr_{n-k}}(\R^n)} \chi(K \cap \overline E) dm(\overline E).
	\end{displaymath}
	By the additivity of the Euler characteristic, we have 
	\begin{displaymath}
	\mu(K \cup L)+\mu(K \cap L)=\mu(K)+\mu(L)
	\end{displaymath}
	whenever $K,L, K \cup L$ are compact convex bodies, hence $\mu$ is a valuation. Clearly $\mu$ belongs to the space $\Val$ of translation-invariant valuations which are continuous with respect to the Hausdorff metric. Additionally, $\mu$ is $k$-homogeneous and even, that is invariant under $-\id$. We thus get a map 
	\begin{displaymath}
	\Cr:\mathcal M(\overline{\Gr_{n-k}}(\R^n))^{tr} \to \Val_k^+,
	\end{displaymath}
	where $\mathcal M^{tr}$ denotes the space of translation-invariant measures. 
	
	Alesker \cite{alesker_mcullenconj01} has shown that the image of this map is dense with respect to the natural Banach space topology on $\Val_k^+$. Therefore Crofton formulas are a central tool in the study of valuations and in integral geometry. 
	
	When restricted to smooth measures and valuations (see Section \ref{sec_preliminaries} for the notion of smoothness of valuations), the map $\Cr$ is in fact a surjection 
	\begin{displaymath}
	\Cr:\mathcal M^\infty(\overline{\Gr_{n-k}}(\R^n))^{tr} \twoheadrightarrow \Val_k^{\infty,+},
	\end{displaymath}
	the kernel of which coincides with the kernel of the cosine transform \cite{alesker_bernstein04}.
	
	Among the many applications in integral geometry of such formulas, we mention the construction of a basis of the space of unitarily invariant valuations on $\C^n$ by Alesker \cite{alesker03_un}, the interpretation of Alesker's product of smooth and even valuations in terms of Crofton measures \cite{bernig07}, the Holmes-Thompson intrinsic volumes of projective metrics \cite{alvarez_fernandes}. Applications outside integral geometry include isoperimetric inequalities in Riemannian geometry \cite{croke84}, symplectic geometry \cite{oh90}, systolic geometry \cite{treibergs85}, algebraic geometry \cite{akhiezer_kazarnovskii} and more. Crofton formulas are employed also outside of pure mathematics, in domains such as microscopy and stereology, see \cite{Kiderlen_Vedel_Jensen}.

	Crofton formulas do not only exist on flat spaces, but also on manifolds. In this case, we need a family of sufficiently nice subsets, endowed with a measure. Then the Crofton integral is given by the integral of the Euler characteristic of the intersection with respect to the measure. Under certain conditions which are given in \cite{fu_alesker_product}, it yields a smooth valuation on the manifold in the sense of Alesker \cite{alesker_val_man2}.  
	
	On spheres and hyperbolic spaces, a natural class of subsets are the totally geodesic submanifolds of a fixed dimension, endowed with the invariant measure. On the $2$-dimensional unit sphere, we have a formula similar to \eqref{eq_crofton_2dim}, with affine lines replaced by equators. This formula is the main ingredient in the proof of the F\'ary-Milnor theorem that the total curvature of a knot in $\R^3$ is bigger than $4\pi$ if the knot is non-trivial. 
	
	In higher dimensions, the formula becomes slightly more complicated. On the $n$-dimensional unit sphere we have 
	\begin{equation} \label{eq_spherical_crofton}
	\int_{\Geod_{n-k}(S^{n})} \chi(M \cap E)dE=\sum_j \frac{1}{\pi \omega_{k+2j-1}} \binom{-\frac{k}{2}}{j} \mu_{k+2j}(M).
	\end{equation}
Here $\Geod_{n-k}(S^{n})$ denotes the totally geodesic submanifolds of dimension $(n-k)$, $\mu_j(M)$ is the $j$-th intrinsic volume of $M$ (which can be defined as the restriction of the $j$-th intrinsic volume on $\R^{n+1}$ under the isometric embedding $S^n \hookrightarrow \R^{n+1}$), and $\omega_n$ denotes the volume of the $n$-dimensional unit ball. A similar formula holds on hyperbolic space. See \cite{fu_wannerer, fu_barcelona} for more on the integral geometry of real space forms. 
	
	Moving on to Lorentzian signature, few results are available. The main challenge to overcome is the non-compactness of the isotropy group, which in general renders the Crofton integral divergent. Some special Crofton-type formulas in Lorentzian spaces of constant curvature, applicable under certain rather restrictive geometric conditions, appeared in \cite{birman84, langevin_chaves_bianconi03, solanes_teufel,  ye_ma_wang16}.
	\subsection{Results}
	
	We are going to prove Crofton formulas on flat spaces, spheres and hyperbolic spaces of arbitrary signatures. Let us recall the definition of these manifolds, referring to  \cite{oneill,wolf61} for more information. 
	\begin{Definition}\label{def:space_forms}
		\begin{enumerate}
			\item The \emph{pseudo-Euclidean space of signature $(p,q)$} is $\R^{p,q}=\R^{p+q}$ with the quadratic form $Q=\sum_{i=1}^p dx_i^2-\sum_{i=p+1}^{p+q} dx_i^2$. 
			\item The \emph{pseudosphere of signature $(p,q)$ and radius $r>0$} is 
			\begin{displaymath}
			S_r^{p,q}=\{v \in \R^{p+1,q}:Q(v)=r^2\},
			\end{displaymath}
			equipped with the induced pseudo-Riemannian metric. Its sectional curvature equals $\sigma=\frac{1}{r^2}$.
			The pseudosphere $S_1^{n,1}\subset \R^{n+1,1}$ is called \emph{de Sitter space}. 
			\item The \emph{pseudohyperbolic space of signature $(p,q)$ and radius $r>0$} is 
			\begin{displaymath}
			H_r^{p,q}=\{v \in \R^{p,q+1}:Q(v)=-r^2\}.
			\end{displaymath}
			Its sectional curvature equals $\sigma=-\frac{1}{r^2}$. The pseudohyperbolic space $H_1^{n,1}$ is called the \emph{anti-de Sitter space}.
		\end{enumerate}
	\end{Definition}
	We will colloquially call these spaces \emph{generalized pseudospheres}. The isometry groups of generalized pseudospheres are given by 
	\begin{align*}
	\Isom(\R^{p,q}) & \cong \overline{\OO}(p,q)=\OO(p,q) \ltimes \R^{p,q},\\
	\Isom(S_r^{p,q}) & \cong \OO(p+1,q),\\
	\Isom(H_r^{p,q}) & \cong \OO(p,q+1).
	\end{align*}
	In each case the action is transitive, and the isotropy group is conjugate to $\OO(p,q)$. These spaces are \emph{isotropic} in the sense that the isotropy group acts transitively on the level sets of the metric in the tangent bundle.

	\begin{Definition}
		A complete connected pseudo-Riemannian manifold of constant sectional curvature is called a \emph{space form}.
	\end{Definition}
	Up to taking connected components and universal coverings, any space form is a generalized pseudosphere (cf. \cite[Chapter 8, Corollary 24]{oneill}).
	
	On a generalized pseudosphere $M$, we will formulate Crofton formulas using the space $\Geod_{n-k}(M)$ of totally geodesic subspaces. However, there is no isometry-invariant Radon measure on this space. Therefore we will use an isometry-invariant \emph{generalized} measure (also called distribution). This causes some technical problems, as the function that we want to integrate is not smooth. Nevertheless, in many cases the integral can still be evaluated. The result is not a valuation anymore, but a \emph{generalized} valuation in the sense of Alesker \cite{alesker_val_man4}. The Crofton map is then a map 
	\begin{displaymath}
	\Cr:\mathcal M^{-\infty}(\Geod_{n-k}(M)) \to \mathcal V^{-\infty}(M).
	\end{displaymath}

In the Riemannian case, any isometry invariant valuation admits an invariant Crofton measure. The corresponding statement in other signatures is also true, but much harder to prove. The second named author proved in \cite{faifman_crofton} the statement first for certain signatures by an explicit, but difficult, computation and then used the behavior of Crofton formulas under restrictions and projections to handle the general case.

 Furthermore, with the exception of Riemannian and Lorentzian signatures, the space of isometry-invariant generalized measures is of greater dimension than the space of isometry-invariant valuations. Thus we are forced to choose a distribution, and must take care to avoid the kernel of the Crofton map.
	
 Using results by Muro \cite{muro99} on analytic families of homogeneous generalized functions on the space of symmetric matrices, one can construct such an invariant generalized measure on $\Geod_{n-k}(M)$. We construct a particular generalized measure $m_k$ with the distinguishing property that it behaves well under restrictions of Crofton measures (see subsection \ref{sec:restriction_crofton}), and is independent (in a precise sense) of signature and dimension.
 
There is some freedom in the normalization of a Crofton measure. We choose the normalization in such a way that the first coefficient in the Crofton formulas will always be $1$. 
 
 Our Crofton formula will evaluate 
\begin{displaymath}
	\Cr_k^M:=\Cr_M(\widehat m_k),
	\end{displaymath}
	where $\sigma \neq 0$ is the curvature of $M$ and $\widehat m_k:=\pi\omega_{k-1}\sqrt{\sigma^{-1}}^k m_k$. The flat case $\sigma=0$ appears through a careful limiting procedure. 
	
	The right hand side of the Crofton formula will be expressed in terms of the recently introduced intrinsic volumes on pseudo-Riemannian manifolds \cite{bernig_faifman_solanes}, which are complex-valued generalized valuations on $M$. They satisfy a Hadwiger-type classification \cite{bernig_faifman_solanes_part2}, which allows us to use the template method to compute the coefficients in this formula. However, the resulting computations lead to distributional integrals on the space of symmetric matrices, that can be evaluated directly essentially only for the Lorentzian signature. To conclude the general case, we use techniques of meromorphic continuation and distributional boundary values of analytic functions. 
	
Due to the functorial properties of the constructed Crofton distribution mirroring those of the intrinsic volumes, namely their adherence to the Weyl principle, the resulting Crofton formulas are signature independent. Remarkably, they are also holomorphic; i.e. they involve the intrinsic volumes only and not their complex conjugates.

	\begin{MainTheorem}
		Let $M$ be a generalized pseudosphere of sectional curvature $\sigma$. Then
		\begin{displaymath}
		\Cr_k^M= \sum_{j \geq 0} \frac {\omega_{k-1}} {\omega_{k+2j-1}} {-\frac k 2 \choose j}\sigma^{j} \mu_{k+2j}.
		\end{displaymath}
	\end{MainTheorem}
	
	The Crofton formulas should be understood formally, namely as the correspondence between distributions on the Grassmannian and the intrinsic volumes through an abstractly defined Crofton map. However they can also be interpreted as explicit Crofton-type formulas applicable to sufficiently nice subsets of the generalized pseudospheres. 
	
By a strictly convex subset of a non-flat generalized pseudosphere $M^m$ we mean its intersection with a strictly convex cone in $\R^{m+1}$, with $M\subset\R^{m+1}$ embedded as in Definition \ref{def:space_forms}. For the Riemannian round sphere and hyperbolic space, this coincides with the standard definition of strict convexity.
	
	\begin{Corollary}
	 Let $A \subset M$ be a smooth and strictly convex domain in $M$. Then the generalized measure $\widehat m_k$ can be applied to the function $E \mapsto \chi(A \cap E), E \in \Geod_{n-k}(M)$, and
		\begin{displaymath}
		 \int_{\Geod_{n-k}(M)} \chi(A \cap E) d\widehat m_k(E)=\sum_{j \geq 0} \frac{\omega_{k-1}} {\omega_{k+2j-1}} {-\frac k 2 \choose j}\sigma^{j} \mu_{k+2j}(A).
		\end{displaymath}
	\end{Corollary}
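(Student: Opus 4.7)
The plan is to deduce the Corollary from the Main Theorem by identifying the abstract equality of generalized valuations with a concrete integral pairing once the set $A$ is sufficiently regular. Concretely, both sides of the identity in the Main Theorem are generalized valuations on $M$; by Alesker's theory \cite{alesker_val_man4}, a generalized valuation $\phi$ can be evaluated on a compact subset $K$ whose conormal cycle $N(K)$ satisfies $\WF(\phi) \cap \WF(N(K)) = \emptyset$. So the first task is to check that this wavefront transversality holds for $\widehat m_k$, for each intrinsic volume $\mu_{k+2j}$, and for the conormal cycle of a smooth strictly convex domain $A$; then both sides of the desired identity become well-defined numbers and coincide.

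For the right-hand side, the pseudo-Riemannian intrinsic volumes $\mu_j$ constructed in \cite{bernig_faifman_solanes} have wavefront set contained in a prescribed closed cone in the cotangent bundle of the Grassmann bundle (determined by isotropic directions); strict convexity of $A$ ensures that the conormal cycle of $A$ lives in the locus of non-degenerate second fundamental form, which is precisely the region transverse to this cone. Hence each $\mu_{k+2j}(A)$ is well-defined and the sum on the right-hand side is finite (in fact the binomial coefficients truncate the range to $k + 2j \leq m := \dim M$).

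For the left-hand side, I would show that when $A$ is smooth and strictly convex, the function $f_A(E) := \chi(A \cap E)$ on $\Geod_{n-k}(M)$ is the indicator of a smooth domain: strict convexity forces $A \cap E$ to be either empty or connected and contractible, and its boundary in $\Geod_{n-k}(M)$ consists of those $E$ tangent to $\partial A$, which is a smooth hypersurface by the strict convexity assumption. Thus $\WF(f_A)$ is the conormal of this tangency hypersurface. I would then compare it with the wavefront set of $\widehat m_k$, which was constructed in the paper from the Muro-type homogeneous distribution on symmetric matrices and whose wavefront is controlled by the locus of degenerate restrictions of $Q$. The analogous transversality to that used for the $\mu_j$ ensures the pairing $\langle \widehat m_k, f_A \rangle$ is well-defined. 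Finally, by the definition of the Crofton map on generalized valuations as a pull-push through the incidence variety, this pairing equals $(\Cr_k^M)(A)$, giving the claimed integral formula.

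The main obstacle I anticipate is the wavefront set analysis: one must verify simultaneously that $\widehat m_k$, constructed via meromorphic continuation and distributional boundary values from a signature-analytic family, has a wavefront set compatible with both the conormal cycles of strictly convex smooth domains and with the tangency hypersurface in $\Geod_{n-k}(M)$. For non-Riemannian signatures, strict convexity in the sense defined before the Corollary (intersection of $M$ with a strictly convex cone in $\R^{m+1}$) is exactly tailored so that the relevant incidence loci intersect transversally, but checking this cleanly and extending the resulting pairing to the limiting flat case $\sigma = 0$ will require care, most likely using continuity of the wavefront set along the analytic family of invariant Crofton distributions.
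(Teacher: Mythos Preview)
Your overall strategy matches the paper's: both sides of the Main Theorem are generalized valuations, and the Corollary follows once one checks the two wave-front conditions needed to (a) evaluate the right-hand side on $A$ and (b) interpret the left-hand side as a genuine distributional integral. The paper carries this out via Corollary~\ref{cor:can_compute_on_LC} and Corollary~\ref{cor:can_compute_mu1}(ii), which are exactly the ingredients you sketch.

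Two points where your outline is too quick, though they are precisely the steps the paper fills in:

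\emph{First}, the phrase ``the analogous transversality to that used for the $\mu_j$'' hides the main technical content. LC-transversality of $A$ (which governs evaluability of $\mu_{k+2j}(A)$) lives in $\mathbb P_M$, whereas the condition needed for the Crofton integral is that the tangency hypersurface $B_{\partial A}\subset\Gr_{n+1-k}(V)$ meets each stratum $\Lambda^\nu_{n+1-k}(V)$ transversally. That these are related is the substance of Proposition~\ref{prop:LC_regularity_implies_WF}, whose proof is the longest in the paper and is not a formal consequence of LC-regularity. Strict convexity enters twice: it makes $\partial A$ LC-regular (so the proposition applies) and it forces every tangent subspace to be \emph{regular}, so that Proposition~\ref{prop:base_of_induction} gives $\Cr\WF^k(A)\subset N^*B_{\partial A}$.

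\emph{Second}, the identification of $\Cr_k^M(A)$ with $\langle \widehat m_k,\chi(A\cap\bullet)\rangle$ is not ``by definition'' of the Crofton map: for distributional $m_k$ one only has $\langle\Cr(m_k),\psi\rangle=\langle m_k,\widehat\psi\rangle$ for \emph{smooth} $\psi$. Passing to $\psi=\chi_A$ requires the approximation argument of Proposition~\ref{prop:apply_crofton_general}, which in turn needs both wave-front conditions simultaneously. Your flat-case worry is not an issue here: the Corollary as stated concerns non-flat generalized pseudospheres, and the $\sigma\to 0$ limit is handled only at the level of the Main Theorem itself.
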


	Note that the spherical Crofton formula \eqref{eq_spherical_crofton} is a special case of our theorem. We also note that we will prove a slightly more general statement in Corollary \ref{cor:can_compute_on_LC} using the notion of LC-regularity from \cite{bernig_faifman_solanes}.

	\subsection{Plan of the paper}
	
	After covering the preliminaries, we turn in section \ref{sec:crofto} to study general Crofton formulas with a distributional Crofton measure, utilizing the Alesker-Radon transform on valuations. In particular, we study under which conditions such formulas can be applied directly to a given subset.
	
	In Section \ref{sec:LC_crofton_WF} we consider LC-regular domains and hypersurfaces of space forms, and deduce that they would be in good position for the evaluation of intrinsic volumes through Crofton integrals, once the corresponding distributions are constructed. The latter construction is carried out in Section \ref{sec:opq_distributions}. Moreover, these distributions are embedded in a meromorphic family of measures on a complex domain as a distributional boundary value, and some delicate - though central to our analysis - convergence questions are investigated and settled. Finally in Section \ref{sec:template}, the Hadwiger-type description of intrinsic volumes combined with the template method are applied to yield the explicit Crofton formulas in all cases.
	
	\subsection*{Acknowledgments}
	We wish to thank Gautier Berck for several inspiring talks and discussions, and the referee for numerous valuable comments which helped improve the exposition.
	
	\section{Preliminaries}
	\label{sec_preliminaries}
	
	\subsection{Notations}
	
	By 
	\begin{displaymath} 
	 \omega_n=\frac{\pi^\frac{n}{2}}{\Gamma\left(\frac{n}{2}+1\right)}
	\end{displaymath}
we denote the volume of the $n$-dimensional unit ball. The space of smooth complex valued $k$-forms on a manifold is denoted by $\Omega^k(M)$. The space of smooth complex valued measures on $M$ is $\mathcal M^\infty(M)$. The space of generalized measures, also called distributions, is denoted by
	\begin{displaymath}
	\mathcal M^{-\infty}(M):=(C^\infty_c(M))^*,
	\end{displaymath}
	where here and in the following the subscript $c$ denotes compactly supported objects. Similarly, for $m=\dim M$, we denote the space of $k$-dimensional currents on $M$ by  $$\Omega_{-\infty}^{m-k}(M):=(\Omega^{k}_c(M))^*.$$ The elements of this space can also be thought of as generalized $(m-k)$-forms.
	
	For an oriented $k$-dimensional submanifold $X \subset M$, we let $\llbracket X\rrbracket$ be the $k$-current which is integration over $X$. 
	
	By $\p_M:=\mathbb P_+(T^*M)$ we denote the cosphere bundle of $M$, which consists of all pairs $(p,[\xi]), p \in M, \xi \in T_p^*M \setminus 0$, where $[\xi]=[\xi']$ if there is some $\lambda>0$ with $\xi=\lambda \xi'$. When no confusion can arise, we use the same notation for subsets of $\mathbb P_M$ and their lifts to $T^*M$. The natural involution on $\p_M$ is the fiberwise antipodal map $s(p,[\xi]):=(p,[-\xi])$.
	
	The wave front set of a generalized form $\omega\in\Omega_{-\infty}(M)$ is a closed subset of $\p_M$, denoted by $\WF(\omega)$, and we refer to \cite{hoermander_pde1} or \cite{duistermaat_book96} for details.
	
	For a generalized pseudo-sphere $M$, we denote by $\Geod_k(M)$ the space of totally geodesic $k$-dimensional submanifolds of $M$. 
	
	\subsection{Smooth valuations}
	
	Let $M$ be a smooth manifold of dimension $m$, which we assume oriented for simplicity. 
	
	Let $\mathcal P(M)$ be the set of compact differentiable polyhedra on $M$. To $A\in\mathcal P(M)$ we associate two subsets of $\mathbb P_M$. The conormal cycle, denoted $\nc(A)$, is the union of all conormal cones to $A$. It is an oriented closed Lipschitz submanifold of dimension $(m-1)$, and naturally stratified by locally closed smooth submanifolds corresponding to the strata of $A$.
	
	The conormal bundle, denoted $N^*A$, is the union of the conormal bundles to all smooth strata of $A$. It holds that $\nc(A) \subset N^*A$. By definition, two stratified spaces intersect transversally if all pairs of smooth strata are transversal. 
	
	A smooth valuation is a functional $\mu:\mathcal P(M)\to \R$ of the form 
	\begin{displaymath}
	\mu(A)=\int_A \phi+\int_{\nc(A)} \omega, \quad \phi \in \Omega^m(M), \omega \in \Omega^{m-1}(\p_M).
	\end{displaymath}
	We will write $\mu=[[\phi,\omega]]$ in this case. 
	
	The space of smooth valuations is denoted by $\mathcal V^\infty(M)$. It admits a natural filtration 
	\begin{displaymath}
	\mathcal V^\infty(M)=\mathcal W_0^\infty(M) \supset \mathcal W_1^\infty(M) \supset \ldots \supset \mathcal W_m^\infty(M) =\mathcal M^\infty(M).
	\end{displaymath}
	It is compatible with the Alesker product of valuations. 
	
	\subsection{Generalized valuations}
	
	The space of generalized valuations is 
	\begin{displaymath}
	\mathcal V^{-\infty}(M):=(\mathcal V^\infty_c(M))^*.
	\end{displaymath}
	By Alesker-Poincar\'e duality we have a natural embedding $\mathcal V^\infty(M) \hookrightarrow \mathcal V^{-\infty}(M)$.
	
	There is a natural filtration 
	\begin{displaymath}
	\mathcal V^{-\infty}(M)=\mathcal W_0^{-\infty}(M) \supset \mathcal W_1^{-\infty}(M) \supset \ldots \supset \mathcal W_m^{-\infty}(M) =\mathcal M^{-\infty}(M).
	\end{displaymath}
	In particular, we may consider a generalized measure as a generalized valuation. 
	
	A compact differentiable polyhedron $A$ defines a generalized valuation $\chi_A$ by 
	\begin{displaymath}
	\langle \chi_A,\mu\rangle=\mu(A), \quad \mu \in \mathcal V_c^\infty(M).
	\end{displaymath}
	
	A generalized valuation $\psi$ can be represented by two generalized forms $\zeta \in C^{-\infty}(M), \tau \in \Omega_{-\infty}^m(\p_M)$ such that 
	\begin{displaymath}
	\langle \psi,[[\phi,\omega]]\rangle=\langle \zeta,\phi\rangle+\langle \tau,\omega\rangle.
	\end{displaymath}
	
	We refer to $\zeta$ and $\tau$ as the \emph{defining currents}. For instance, the defining currents of $\chi_A, A \in \mathcal P(M)$ are $\zeta=\mathbbm 1_A, \tau=\llbracket \nc(A) \rrbracket$. The wave front set of $\psi$ is defined as the pair $\Lambda \subset \p_M, \Gamma \subset \p_{\p_M}$ of the wave front sets of $\zeta$ and $\tau$. The space of all generalized valuations with wave front sets contained in $\Lambda,\Gamma$ is denoted by $\mathcal V^{-\infty}_{\Lambda,\Gamma}(M)$.
	
	Consider $\psi \in \mathcal V^{-\infty}(M)$. We say that $A \in \mathcal P(M)$ is WF-transversal to $\psi$, denoted $A \pitchfork \psi$, if the conditions of \cite[Theorem 8.3]{alesker_bernig} hold for 
	\begin{align*}
	(\Lambda_1, \Gamma_1) & :=\WF(\chi_A),\\ 
	(\Lambda_2,\Gamma_2) & :=\WF(\psi).
	\end{align*}
	
	These conditions imply that Alesker's product of smooth valuations can be extended to a jointly sequentially continuous product 
	\begin{displaymath}
	\mathcal{V}_{\Lambda_1,\Gamma_1}^{-\infty}(M) \times \mathcal{V}_{\Lambda_2,\Gamma_2}^{-\infty}(M) \to \mathcal{V}^{-\infty}(M),
	\end{displaymath}
	and in particular the pairing $\psi(A):=\langle \psi, \chi_A\rangle=\int_M \psi \cdot \chi_A$ is well-defined.

	Let us write a sufficient set of conditions in a particular case. 
		\begin{Proposition} \label{prop:WF_transversal}
		Assume $\WF(\psi) \subset(N^*D, N^*L)$ for some submanifolds with corners $D\subset M$, $L\subset\p_M$, and take $A \in \mathcal P(M)$. Assume further
		\begin{enumerate}
			\item $A\pitchfork D$.
			\item $\nc(A)\pitchfork \pi^{-1}D$, where $\pi:\mathbb P_M\to M$ is the natural projection.
			\item $\pi^{-1}A\pitchfork L$.
			\item $\nc(A) \pitchfork s(L)$, where $s:\mathbb P_M\to\mathbb P_M$ is the antipodal map. 
		\end{enumerate}
		Then $A\pitchfork \psi$.
	\end{Proposition}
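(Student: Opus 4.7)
The plan is to unpack \cite[Theorem 8.3]{alesker_bernig} in the present setting and reduce each of its four wave-front hypotheses to one of the assumptions (i)--(iv).

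First I would identify the wave front sets of the defining currents of $\chi_A$. Since these are $(\mathbbm 1_A, \llbracket \nc(A)\rrbracket)$, standard results on conormal distributions give
\[
\WF(\mathbbm 1_A)\subset N^*A,\qquad \WF(\llbracket \nc(A)\rrbracket)\subset N^*(\nc(A)),
\]
viewed respectively in $\p_M$ and in the cosphere bundle of $\p_M$. By hypothesis the defining currents $(\zeta,\tau)$ of $\psi$ satisfy $\WF(\zeta)\subset N^*D$ and $\WF(\tau)\subset N^*L$.

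Second, I would recall that the conditions in \cite[Theorem 8.3]{alesker_bernig} amount to four wave-front compatibility requirements, one for each way of pairing a defining current of $\chi_A$ against a defining current of $\psi$. In the cross pairs between $M$-level and $\p_M$-level data, the projection $\pi\colon \p_M\to M$ is used to pull the $M$-level current back to $\p_M$, while in the pairing of two $\p_M$-level currents the antipodal map $s$ appears, reflecting the fact that wave fronts must not meet at opposite covectors in the contact structure of $\p_M$.

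Third, I would match the four assumptions to these requirements. Condition (i), $A\pitchfork D$, gives that $N^*A$ and $N^*D$ meet only at the zero section, which is precisely the disjointness needed for the pairing of $\mathbbm 1_A$ against $\zeta$. Condition (ii), $\nc(A)\pitchfork \pi^{-1}D$, ensures that $N^*\nc(A)$ is disjoint from $N^*(\pi^{-1}D)$; since the wave front set of the pullback $\pi^*\zeta$ is contained in $N^*(\pi^{-1}D)$, this settles the compatibility between $\llbracket\nc(A)\rrbracket$ and the lift of $\zeta$. Condition (iii) plays the symmetric role for $\pi^*\mathbbm 1_A$ against $\tau$. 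Finally, condition (iv), $\nc(A)\pitchfork s(L)$, handles the pairing of $\llbracket \nc(A)\rrbracket$ with $\tau$, with the antipodal map $s$ accounting for the orientation flip built into the pairing of currents on $\p_M$.

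The main obstacle is essentially bookkeeping: making precise the dictionary between the abstract wave-front hypotheses of \cite{alesker_bernig} and the four geometric transversality statements, and in particular keeping careful track of the roles of $\pi$ and $s$. Once this dictionary is laid out, each of the four conditions of Theorem 8.3 follows directly from the hypothesis of the same index.
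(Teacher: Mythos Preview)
Your approach is exactly the paper's: the paper's proof consists of the single sentence ``These conditions imply the conditions in \cite[Theorem 8.3]{alesker_bernig},'' and you are simply unpacking that citation, matching each of (i)--(iv) to one of the wave-front hypotheses there. Your elaboration is more detailed than what the paper provides, though you should double-check the precise form of the conditions in \cite[Theorem 8.3]{alesker_bernig} (which are somewhat more elaborate than four plain disjointness statements) to make sure your informal description of the role of $\pi$ and $s$ lines up exactly.
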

	
	\proof
These conditions imply the conditions in \cite[Theorem 8.3]{alesker_bernig}.
	\endproof
	
	\subsection{Intrinsic volumes on pseudo-Riemannian manifolds}
	
	In \cite{bernig_faifman_solanes} we constructed a sequence of complex-valued generalized valuations $\mu_0^M,\ldots,\mu_m^M$ naturally associated to a pseudo-Riemannian manifold $M$ of dimension $m$. They are invariant under isometries and called the \emph{intrinsic volumes of $M$}. The intrinsic volume $\mu_0$ equals the Euler characteristic, while $\mu_m$ is the volume measure of $M$, multiplied by $\mathbf i^q$ where $q$ is the negative index of the signature. For other values of $k$, $\mu_k$ is typically neither real nor purely imaginary.  
	
	The wave front set of $\mu_k$ is contained in $(\emptyset,N^*(\LC^*_M))$, where $\LC^*_M \subset \p_M$ is the dual light cone of the metric, i.e. the set of all pairs $(p,[\xi]) \in \p_M$ such that $g|_p(\xi,\xi)=0$. Here we use the metric to identify $TM$ and $T^*M$. 
	
	A subset $A\in\mathcal P(M)$ is \emph{LC-transversal} if $\nc(A)\pitchfork \LC^*_M$.
	
	\begin{Lemma}
		Assume $D=\emptyset, L=\LC^*_M \subset \p_M$, and $A \in \mathcal P(M)$. Then the conditions of Proposition \ref{prop:WF_transversal} are equivalent to the LC-transversality of $A$. In particular, the intrinsic volume $\mu_k$ may be evaluated at LC-transversal $A$.
	\end{Lemma}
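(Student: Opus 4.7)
The plan is to verify each of the four conditions of Proposition \ref{prop:WF_transversal} separately and observe that, under the choices $D=\emptyset$ and $L=\LC^*_M$, three of them become automatic while only one carries the actual content.

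First, since $D=\emptyset$, conditions (i) and (ii) hold vacuously. For condition (iv), I would note that the fibrewise quadratic form $g|_p(\cdot,\cdot)$ is invariant under $\xi\mapsto -\xi$, so $\LC^*_M$ is invariant under the antipodal map $s$. Hence $s(\LC^*_M)=\LC^*_M$, and condition (iv) reads exactly $\nc(A)\pitchfork \LC^*_M$, which is the definition of LC-transversality.

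The substance of the proof is then to show that condition (iii), $\pi^{-1}A\pitchfork \LC^*_M$ in $\p_M$, holds automatically for every $A\in\mathcal P(M)$. For this I would show that $\pi|_{\LC^*_M}\colon \LC^*_M\to M$ is a submersion. In local coordinates $(x,\xi)$ on $T^*M$, the defining equation $g_x(\xi,\xi)=0$ has vertical derivative $\eta\mapsto 2g_p(\xi,\eta)$, which is a nonzero linear functional on $T_p^*M$ by non-degeneracy of $g$ and the assumption $\xi\neq 0$. Hence every horizontal $v\in T_pM$ lifts to some $(v,\eta)\in T_{(p,[\xi])}\LC^*_M$, so $T\LC^*_M$ projects onto the full horizontal subspace $T_pM$. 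On the other hand, $\pi^{-1}A$ is $\pi$-saturated, so its tangent space at every point (on every stratum of $A$) contains the full vertical subspace. Combining these two facts yields $T\pi^{-1}A+T\LC^*_M=T\p_M$, establishing condition (iii) at every point of intersection.

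The only substantive point is the submersion claim for $\pi|_{\LC^*_M}$, which is an immediate consequence of non-degeneracy of $g$. Once the four conditions of Proposition \ref{prop:WF_transversal} are reduced to LC-transversality, the last assertion follows by applying that proposition to $\mu_k$, whose wave front estimate $\WF(\mu_k)\subset(\emptyset,N^*(\LC^*_M))$ was recalled immediately above the lemma.
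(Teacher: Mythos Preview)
Your proof is correct and follows essentially the same approach as the paper: conditions (i) and (ii) are vacuous, condition (iii) holds for all $A$ because $T\pi^{-1}A$ contains the vertical directions while $T\LC^*_M$ surjects onto the horizontal ones, and condition (iv) reduces to LC-transversality. You supply more detail than the paper does---in particular the explicit use of $s$-invariance of $\LC^*_M$ for (iv) and the local-coordinate verification that $\pi|_{\LC^*_M}$ is a submersion for (iii)---but the structure and ideas are the same.
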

	
\proof
The first two conditions are empty. The third condition is satisfied for arbitrary $A$, since the tangent space to $\pi^{-1}A$ contains all vertical directions, while the tangent space of $\LC^*_M$ contains all horizontal directions. The fourth condition is precisely LC-transversality.
\endproof

We will also need the notion of LC-regularity, which was introduced in \cite{bernig_faifman_solanes}. 
\begin{Definition}
 Let $X$ be a smooth manifold equipped with a smooth field $g$ of quadratic forms over $TX$. We say that $(X,g)$ is LC-regular if $0$ is a regular value of $g \in C^\infty(TX \setminus \underline{0})$. 
\end{Definition}

It was shown in \cite[Proposition 4.9]{bernig_faifman_solanes} that the extrinsic notion of LC-transversality and the intrinsic notion of LC-regularity coincide: a submanifold of a pseudo-Riemannian manifold, equipped with the field of quadratic forms induced from the metric, is LC-regular if and only if it is LC-transversal.

The most important property of the intrinsic volumes is that they satisfy a Weyl principle: for any isometric immersion $M \looparrowright \widetilde M$ of pseudo-Riemannian manifolds we have  
	\begin{displaymath}
	\mu_k^{\widetilde M}|_M=\mu_k^M,
	\end{displaymath}
	in particular the restriction on the left hand side is well-defined. Conversely, we have shown in \cite{bernig_faifman_solanes_part2} that any family of valuations associated to pseudo-Riemannian manifolds that satisfies the Weyl principle must be a linear combination of intrinsic volumes.

	\section{Distributional Crofton formulas}\label{sec:crofto}
	
	Let $M^m$ be a manifold. A Crofton formula for a smooth valuation $\phi\in\mathcal V^\infty(M)$ has the form $\phi(A)=\int_{S}\chi(X(s)\cap A)d\mu(s)$, where $S$ is a smooth manifold parametrizing a smooth family of submanifolds of $M$, and $\mu$ a smooth measure on $S$. Similarly, a distributional Crofton formula has $\phi\in\mathcal V^{-\infty}(M)$, and $\mu$ is a distribution.
	
	In this section we study some general properties of such formulas, when $M\subset V\setminus\{0\}$ is a submanifold without boundary in a $d$-dimensional linear space $V$, and $S=\Gr_{d-k}(V), k <d, X(s)=s \cap M, s \in S$.
	
	We utilize the Radon transform on valuations, introduced in \cite{alesker_intgeo}. Loosely speaking, the Crofton map is but the Radon transform of a measure with respect to the Euler characteristic. However, there are technical difficulties in applying this formalism directly to distributions, and a large part of this section is concerned with resolving those difficulties. The main results to this end are Propositions \ref{prop:smooth_on_grassmannian} and \ref{prop:yomdin}. In the last part, we describe the Crofton wave front of sufficiently nice sets in Proposition \ref{prop:base_of_induction}, which controls the applicability of an explicit Crofton integral to the given set.
	
	\subsection{The general setting}\label{sec:crofton_functorial}
	
	\label{Preliminaries} For a submanifold with corners $X\subset M$, define $Z_X\subset X\times \Gr_{d-k}(V)$ by $Z_X=\{(x,E): x\in X\cap E\}$. Then $Z_X$ is a manifold with corners, more precisely it is the total space of the fiber bundle over $X$ with fiber $\Gr_{d-k-1}(V/\R x)$ at $x\in X$. Write 	
\begin{displaymath}
	 \xymatrix@=2em{X & Z_X \ar[l]_-{\pi_X} \ar[r]^-{\tau_X} & \Gr_{d-k}(V)}
	\end{displaymath}

	for the natural projections. 
 	
	Denote by $W_X\subset\Gr_{d-k}(V)$ the set of subspaces intersecting $X$ transversally in $V$. 
	
	We will need a simple fact from linear algebra, which we state in a rather general form that will be useful for us in several places.
	
	\begin{Lemma}\label{lem:pair_of_spaces}
	Let $V$ be a vector space, $L_0 \in \Gr_l(V), E_0 \in \Gr_k(V)$ with $L_0 \subset E_0$. Denote by $i:L_0\hookrightarrow E_0$ the inclusion, and $\pi: V/L_0\to V/E_0$ the projection.
            \begin{enumerate}
             \item Let $E(t)\in\Gr_k(V)$ be a smooth path with $E(0)=E_0$ and $A:L_0\to V/L_0$ a linear map. Then there is a smooth path $L(t) \in \Gr_l(E(t))$ with $L(0)=L_0$ and $L'(0)=A$ if and only if the following diagram commutes:
		\begin{displaymath}
		\xymatrix{L_0 \ar[r]^-{A} \ar[d]^{i} & V/L_0 \ar[d]^{\pi}\\
			E_0 \ar[r]^-{E'(0)} & V/E_0\\
		}
		\end{displaymath}
		\item Let $L(t) \in \Gr_l(V)$ be a smooth path with $L(0)=L_0$. Let $B:E_0\to V/E_0$ be a linear map. Then there is a smooth path $E(t) \in \Gr_k(V)$ with $L(t) \subset E(t)$, $E(0)=E_0$ and $E'(0)=B$ if and only if the following diagram commutes:
		\begin{displaymath}
		\xymatrix{L_0 \ar[r]^-{L'(0)} \ar[d]^{i} & V/L_0 \ar[d]^{\pi}\\
			E_0 \ar[r]^-{B} & V/E_0\\
		}
		\end{displaymath} 
            \end{enumerate}
	\end{Lemma}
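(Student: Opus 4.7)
The plan is to trivialize both Grassmannians locally around $L_0$ and $E_0$ by choosing complements, and then verify both directions by explicit linear-algebraic constructions in these coordinates. Fix a complement $H$ of $L_0$ in $E_0$ and a complement $F$ of $E_0$ in $V$, so that $V = L_0 \oplus H \oplus F$ with canonical identifications $V/L_0 \cong H \oplus F$ and $V/E_0 \cong F$, under which the projection $\pi$ becomes the coordinate projection $H \oplus F \to F$. Near $E_0$ every $k$-plane is uniquely the graph of some $\phi \in \Hom(E_0, F)$, and the tangent vector of a path $E(t)$ is identified with $\phi'(0)$ under $T_{E_0}\Gr_k(V) \cong \Hom(E_0, V/E_0)$; analogously for $l$-planes near $L_0$ via $\psi \in \Hom(L_0, H \oplus F)$.

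Necessity in both parts comes from one observation: given $L(t) \subset E(t)$ and $v \in L_0$, pick a smooth lift $v(t) \in L(t)$ with $v(0) = v$. Its derivative $v'(0) \in V$ descends to $A(v) = L'(0)(v)$ in $V/L_0$ and to $E'(0)(v)$ in $V/E_0$ (since $v = i(v) \in E_0$), and these two classes are related by $\pi$, giving $\pi \circ A = E'(0) \circ i$.

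For sufficiency in (1), decompose $A = A_H + A_F$ via $V/L_0 \cong H \oplus F$; the hypothesis forces $A_F = E'(0)|_{L_0}$. With $\phi(t)$ parametrizing $E(t)$, I define $L(t)$ as the graph of $\psi(t)(v) := t A_H(v) + \phi(t)(v + t A_H(v))$. By construction the vector $v + \psi(t)(v)$ sits on the graph of $\phi(t)$, so $L(t) \subset E(t)$; differentiating at $t=0$ and using $\phi(0)=0$ yields $\psi'(0) = A_H + \phi'(0)|_{L_0} = A_H + A_F = A$. For (2), write $\psi(t) = \psi_H(t) + \psi_F(t)$; the hypothesis reads $\psi_F'(0) = B|_{L_0}$. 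Set $\beta(t) := t B|_H$ and define $\phi(t) : L_0 \oplus H \to F$ by $\phi(t)(v+w) := \psi_F(t)(v) - \beta(t)(\psi_H(t)(v)) + \beta(t)(w)$. The identity $\phi(t)(v + \psi_H(t)(v)) = \psi_F(t)(v)$ then shows $L(t) \subset E(t)$, while $\phi(0) = 0$ and a direct differentiation give $\phi'(0) = B$. I do not expect a serious obstacle; the whole argument is short linear algebra, and the only delicate point is the bookkeeping in identifying tangent spaces to Grassmannians with the appropriate $\Hom$-spaces.
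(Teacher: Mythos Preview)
Your proof is correct. The necessity argument is essentially the same as the paper's, but the sufficiency arguments take a genuinely different route.

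For sufficiency, the paper works on the partial flag variety $Z=\{L\subset E\}\subset\Gr_l(V)\times\Gr_k(V)$, lifts any path in $Z$ to a path in $\GL(V)$ through the identity, and observes that both $L'(0)$ and $E'(0)$ are then restrictions of a single endomorphism $g'(0)\in\mathfrak{gl}(V)$; this immediately gives commutativity of the diagram. For the converse it does not construct $L(t)$ explicitly but instead argues by dimension: the achievable velocities $L'(0)$ form an affine subspace of $\Hom(L_0,V/L_0)$ of dimension $l(k-l)$, which equals the dimension of the affine space of maps $A$ making the diagram commute, so the two coincide. Part (ii) is then deduced from part (i) by passing to orthogonal complements. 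Your approach, by contrast, fixes a splitting $V=L_0\oplus H\oplus F$ and writes down explicit graph maps $\psi(t)$ and $\phi(t)$ realizing the required paths. This is more hands-on and entirely self-contained; it avoids both the dimension count and the duality trick, at the cost of a bit more bookkeeping. Either argument is short; yours has the minor advantage of producing the path concretely rather than via an existence argument.
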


	\begin{Remark}
		The 'only if' statement obviously remains true if instead of $L(t)\subset E(t)$, we have $\measuredangle (L(t), E(t))=o(t)$ with respect to any Euclidean structure. 
	\end{Remark}
	
	\proof
	Consider the partial flag manifold $Z=\{ L\subset E\}\subset \Gr_l(V)\times\Gr_k(V)$. The group $\GL(V)$ acts transitively on $Z$, and any smooth path $F(t)=(L(t) \subset E(t))\in Z$ can be lifted to a smooth curve $g(t)\in\GL(V)$ with $g(0)=\id$ and $F(t)=g(t)F(0)$. Thus $E'(0):E_0\to V/E_0$ and $L'(0):L_0\to V/L_0$ are both projections of $g'(0):V\to V$, and the diagram commutes. 
	
	In the other direction, write $\pi_W:V\to V/W$ for the natural projection. it follows by the above that the set of velocity vectors $L'(0)$ for all curves $L(t)\subset E(t)$ is the affine space $\{\pi_{L_0}\circ T|_{L_0}\in\mathrm{Hom}(L_0, V/L_0): T\in\mathfrak{gl}(V), \pi_{E_0}\circ T|_{E_0}=E'(0)\}$, which is of dimension $\binom{k}{2}-\binom{l}{2}-\binom{ k-l}{2}=l(k-l)$. This is also the dimension of the affine space of all $A$ such that the diagram commutes, which finishes the proof of the first part. The second part follows from the first one by taking orthogonal complements.
	\endproof
	
	We need the following technical statement appearing in \cite[Proposition 5.1.3]{alesker_intgeo}.
	\begin{Lemma}\label{lem:radon_condition}
		The natural projection $\pi:N^*Z_M\setminus 0\to T^*M\setminus 0$ is a submersion.
	\end{Lemma}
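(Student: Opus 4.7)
The plan is to prove the submersion property by a direct computation in an affine chart on the Grassmannian. Fix $(x_0, E_0) \in Z_M$ and a linear complement $F_0$ of $E_0$ in $V$, so that $\Gr_{d-k}(V)$ near $E_0$ is parametrized by $\phi \in \Hom(E_0, F_0)$ via $E \leftrightarrow \mathrm{graph}(\phi)$. Decomposing $x \in V$ as $x = x_{E_0} + x_{F_0}$, the equation defining $Z_M$ reads $f(x, \phi) := \phi(x_{E_0}) - x_{F_0} = 0$ in $F_0$. Since $x_0 \in E_0 \setminus \{0\}$, already the partial $\dot\phi \mapsto \dot\phi(x_0)$ surjects onto $F_0$, so $Z_M$ is a smooth submanifold of codimension $k$. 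I would read off the tangent space $T_{(x_0, E_0)}Z_M = \{(v, \dot\phi) \in T_{x_0}M \oplus \Hom(E_0, F_0) : \dot\phi(x_0) = v_{F_0}\}$ and the conormal fiber $N^*_{(x_0, E_0)}Z_M = \{(\xi_\ell, \eta_\ell) : \ell \in F_0^*\}$, with $\xi_\ell(v) = -\ell(v_{F_0})$ for $v \in T_{x_0}M$ and $\eta_\ell(\dot\phi) = \ell(\dot\phi(x_0))$ for $\dot\phi \in \Hom(E_0, F_0)$.

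Next, taking $(x, \phi, \ell)$ as local coordinates on $N^*Z_M$, I would differentiate the projection $\pi\colon (x, \phi, \ell) \mapsto (x, \xi(x, \phi, \ell))$ at $(x_0, 0, \ell_0)$. Trivializing $T^*M$ by the ambient $V^*$, a tangent vector $(v, \dot\phi, \dot\ell)$ subject to the constraint $\dot\phi(x_0) = v_{F_0}$ is sent to $(v, \delta\xi) \in T_{x_0}M \oplus T^*_{x_0}M$ with $\delta\xi(u) = \ell_0(\dot\phi(u_{E_0})) - \dot\ell(u_{F_0})$ for $u \in T_{x_0}M$. Given an arbitrary target $(w, \zeta)$, set $v := w$ and split $\dot\phi = \dot\phi' + \dot\phi''$ with $\dot\phi'$ any fixed lift of $\dot\phi'(x_0) = w_{F_0}$ and $\dot\phi'' \in \Hom(E_0/\R x_0, F_0)$. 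The surjectivity of $d\pi$ then reduces to surjectivity of the auxiliary map $\Psi\colon \Hom(E_0/\R x_0, F_0) \oplus F_0^* \to T^*_{x_0}M$, $(\dot\phi'', \dot\ell) \mapsto [u \mapsto \ell_0(\dot\phi''(u_{E_0})) - \dot\ell(u_{F_0})]$. A brief transpose computation gives $\ker \Psi^* = T_{x_0}M \cap \R x_0$.

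The main obstacle is to verify that $T_{x_0}M \cap \R x_0 = \{0\}$, which is what guarantees the surjectivity of $\Psi$. This is not automatic from the bare hypothesis $M \subset V \setminus \{0\}$, but is automatic in every geometric situation of interest in the paper: for pseudospheres $S^{p,q}_r \subset \R^{p+1, q}$ and pseudohyperbolic spaces $H^{p,q}_r \subset \R^{p,q+1}$, the tangent space at $x_0$ equals $x_0^{\perp_Q}$, so $x_0 \in T_{x_0}M$ would force $Q(x_0, x_0) = 0$, contradicting $Q(x_0, x_0) = \pm r^2 \neq 0$. Under this non-radial transversality, $\Psi$ is surjective at every point and $\pi$ takes nonzero conormals to nonzero covectors, so $\pi\colon N^*Z_M \setminus 0 \to T^*M \setminus 0$ is a well-defined submersion as claimed.
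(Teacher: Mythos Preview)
Your proof is correct and takes a genuinely different route from the paper's. The paper argues by curve-lifting: given a smooth path $(p_t,\xi_t)$ in $T^*M\setminus 0$, it picks $E_t\ni p_t$ with $T_{p_t}(M\cap E_t)\subset\Ker(\xi_t)$ and then builds $\eta_t$ using an auxiliary Euclidean structure, producing a lift in $N^*Z_M$. You instead fix an affine chart on the Grassmannian, parametrize the conormal fiber explicitly by $\ell\in F_0^*$, compute $d\pi$ directly, and reduce its surjectivity to that of the linear map $\Psi$, whose transpose you show has kernel exactly $T_{x_0}M\cap\R x_0$.

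Your approach makes the needed hypothesis $T_{x_0}M\cap\R x_0=\{0\}$ completely explicit; the paper's proof also tacitly uses it (the step ``choose $E_t$ with $T_{p_t}(M\cap E_t)\subset\Ker\xi_t$, which evidently can be done'' fails when $p_t\in T_{p_t}M$ and $\xi_t(p_t)\neq 0$), but does not isolate it. The paper's argument is coordinate-free and ties in with the tangent-space description in the preceding lemma; yours is more hands-on and verifies submersivity pointwise rather than through lifting of curves. One small refinement: your final assertion that $\pi$ sends nonzero conormals to nonzero covectors requires not merely $T_{x_0}M\cap\R x_0=\{0\}$ but $T_{x_0}M+E_0=V$ for every $(x_0,E_0)\in Z_M$. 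In the pseudosphere case this follows because $\dim V=\dim M+1$ forces $T_{x_0}M\oplus\R x_0=V$, hence $T_{x_0}M+E_0\supset T_{x_0}M+\R x_0=V$; but in the general setting of Section~3.1 (arbitrary codimension of $M$ in $V$) the condition $T_{x_0}M+E_0=V$ can fail for some $E_0\ni x_0$, so your concluding sentence is accurate for the intended applications rather than in full generality.
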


\proof
Let $(p_t,\xi_t)$ be a smooth path in $T^*M\setminus 0$. We will lift it to a smooth path $(p_t,E_t,\xi_t,\eta_t)\in T^*(M\times\Gr_{d-k}(V))$ such that $p_t \in E_t$,  and $(\xi_t,\eta_t)\in N^*_{p_t,E_t}Z_M$. 
	Now for $v\in T_pM$,  $B \in T_E\Gr_{d-k}(V)=\mathrm{Hom}(E,V/E)$, we have by Lemma \ref{lem:pair_of_spaces} (applied with $l=1, L_0=\R p$) that $(v,B)\in T_{p,E}Z_M$ if and only if $v+E=B(p)$. 
	
	Hence 
	\begin{align*}
	N^*_{p,E} Z_M=\{&(\xi,\eta) \in T_p^*M \times T_E^*\Gr_{d-k}(V):\\& \langle \xi,v\rangle+\langle \eta,B\rangle=0\text{ whenever } v+E=B(p)\}.
	\end{align*}
	
	Fix a Euclidean structure on $V$, inducing Euclidean structures on the spaces $\mathrm{Hom}(E_t, V/E_t)$.   
	Let us choose some $E_t$ such that $p_t\in E_t$, and $T_{p_t}M\cap E_t\subset \Ker(\xi_t)$, which evidently can be done. Consider the linear subspace 
	\begin{displaymath}
	W_t=\{ B \in T_{E_t}\Gr_{d-k}(V): B(p_t)\in (T_{p_t}M+E_t)/E_t\},
	\end{displaymath} 	
	and recall the natural isomorphism $q_t:(T_{p_t}M+E_t)/E_t \xrightarrow{\sim}  T_{p_t}M/(T_{p_t}M\cap E_t)$.
	We now may define $\eta_t\in W_t^*$ by $\langle \eta_t,B\rangle=-\langle \xi_t,q_t(B(p_t))\rangle$ for each $B \in W_t$, as  $T_{p_t}M\cap E_t \subset \ker \xi_t$. Extend $\eta_t$ by zero to $W_t^\perp$. It follows that $(\xi_t,\eta_t)\in N^*_{p_t,E_t}Z_M$, completing the proof.
	\endproof
	
	It follows by \cite[Corollary 4.1.7]{alesker_intgeo} that the Radon transforms with respect to the Euler characteristic,  $\mathcal R_M=(\tau_M)_*\pi_M^*:\mathcal V_c^{-\infty}(M)\to \mathcal V^{-\infty}(\Gr_{d-k}(V))$ and $\mathcal R_M^T=(\pi_M)_*\tau_M^*:\mathcal V^\infty(\Gr_{d-k}(V))\to \mathcal V^\infty(M)$, are well-defined and continuous.

	\begin{Definition}	
		For any $\phi\in\mathcal V_c^{-\infty}(M)$, let $\widehat \phi\in C^{-\infty}(\Gr_{d-k}(V))$ be the defining current of $\mathcal R_M\phi$ (on the base manifold). Equivalently, using \cite[Proposition 7.3.6]{alesker_val_man4} we have 	
		\begin{displaymath}	
		\widehat\phi=[\mathcal R_M\phi]\in\mathcal W_0^{-\infty}(\Gr_{d-k}(V))/\mathcal W_1^{-\infty}(\Gr_{d-k}(V))=C^{-\infty}(\Gr_{d-k}(V)).	
		\end{displaymath}	
	\end{Definition}	
	
	\begin{Remark}\label{rem:radon_fail1}	
		It is false in general that $\widehat \phi$ is a smooth function when $\phi$ is a smooth valuation, see Remark \ref{rem:radon_fail2}.	
	\end{Remark}	
	
	\begin{Definition}\label{def:smooth_crofton}	
		The Crofton map $\Cr_M: \mathcal M^{\infty}(\Gr_{d-k}(V))\to \mathcal W_k^\infty(M)$ is the restriction of $\mathcal R_M^T$ to $\mathcal M^\infty(\Gr_{d-k}(V))$. More explicitly,  	
		\begin{displaymath}	
		\Cr_M(\mu)(X)=\int_{\Gr_{d-k}(V)}\widehat \chi_X(E)d\mu(E), \quad X \in \mathcal P(M).	
		\end{displaymath}	
	\end{Definition}	
	
	 We will see in Proposition \ref{prop:yomdin} below that $\widehat \chi_X(E)=\chi(X \cap E)$. 
		
	\subsection{Distributional Crofton measures}
	To allow distributional Crofton measures, it seems essential to require that all intersections $E\cap M$ are transversal, for $E\in\Gr_{d-k}(V)$. This is easily seen to be equivalent, for any $k>0$, to having $\R x\oplus T_xM=V$ for all $x\in M$. In particular $\dim V=\dim M+1=m+1$. We deduce that $M$ is a hypersurface that is locally diffeomorphic to an open subset of $\mathbb P_+(V)$ through the radial projection. In other words, $M$ is locally a strictly star-shaped hypersurface around the origin.
	
	\begin{Proposition}\label{prop:smooth_on_grassmannian}	
	\begin{enumerate}
	 \item For all $0\leq k\leq m$ and $\psi \in\mathcal V^\infty_c(M)$, it holds that $E \mapsto \psi(E\cap M)$ is a smooth function on $\Gr_{m+1-k}(V)$.
	 \item The image in $C^{-\infty}(\Gr_{m+1-k}(V))$ of this function equals $\widehat\psi$.
	\end{enumerate}
	\end{Proposition}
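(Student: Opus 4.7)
The plan is to argue that under our assumptions $\mathcal R_M\psi$ is in fact a \emph{smooth} valuation on $\Gr_{m+1-k}(V)$ whose image in $C^\infty(\Gr_{m+1-k}(V))\subset C^{-\infty}(\Gr_{m+1-k}(V))$ is $E\mapsto\psi(E\cap M)$; this gives both (i) and (ii) simultaneously.

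First I would show that $\tau_M:Z_M\to \Gr_{m+1-k}(V)$ is a submersion everywhere. Applying Lemma \ref{lem:pair_of_spaces} with $L_0=\R p\subset E_0=E$, a pair $(v,B)\in T_pM\times T_E\Gr_{m+1-k}(V)$ belongs to $T_{(p,E)}Z_M$ precisely when $v+E=B(p)$ in $V/E$; surjectivity of $d\tau_M(v,B)=B$ is then equivalent to surjectivity of the reduction $T_pM\to V/E$, which holds because of the standing decomposition $V=\R p\oplus T_pM$. Combined with the fact that $\pi_M$ is a fibration with compact (sub-Grassmannian) fibers and $\psi$ has compact support, the defining forms of $\pi_M^*\psi$ have compact support in $Z_M$ and can be pushed forward under $\tau_M$ by ordinary fiber integration; hence $\mathcal R_M\psi=(\tau_M)_*\pi_M^*\psi$ is represented by smooth forms on $\Gr_{m+1-k}(V)$ and is a smooth valuation.

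Second, for any smooth valuation $\mu=[[\alpha,\beta]]$ on a manifold $N$, the filtration quotient $\mathcal W_0^\infty(N)/\mathcal W_1^\infty(N)\cong C^\infty(N)$ sends $\mu$ to the function $y\mapsto\mu(\{y\})=\int_{\mathbb P_+(T^*_yN)}\beta$, which is smooth by fiber integration along $\mathbb P_N\to N$. Applied with $\mu=\mathcal R_M\psi$, the pushforward identity $(\tau_M)_*(\pi_M^*\psi)(\{E\})=(\pi_M^*\psi)(\tau_M^{-1}\{E\})$, together with the fact that $\pi_M$ restricts to a diffeomorphism $\tau_M^{-1}\{E\}\to E\cap M$ (yielding $(\pi_M^*\psi)(\tau_M^{-1}\{E\})=\psi(E\cap M)$), identifies the function-part of $\mathcal R_M\psi$ with $E\mapsto\psi(E\cap M)$. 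Statement (i) is the smoothness of this function, and (ii) is the tautology that the image of a smooth valuation in $C^{-\infty}$ coincides, as a distribution, with its smooth function-part.

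The main technical point is the smoothness of $\mathcal R_M\psi$, which as noted in Remark \ref{rem:radon_fail1} fails in general; here the everywhere transversality of $E$ with $M$---forced by strict star-shapedness---makes $\tau_M$ a global submersion and allows the abstract Alesker pushforward to be replaced by genuine fiber integration. A secondary subtlety is justifying $(\pi_M^*\psi)(\tau_M^{-1}\{E\})=\psi(E\cap M)$, which uses that the pullback under a submersion is compatible with evaluation on a subset projecting diffeomorphically onto its image of the same dimension.
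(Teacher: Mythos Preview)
Your approach is conceptually sound and offers a more unified alternative to the paper's argument. The key insight---that in the star-shaped setting $\tau_M$ is a submersion, forcing $\mathcal R_M\psi$ to be a \emph{smooth} valuation---is correct and explains precisely why Remark~\ref{rem:radon_fail1} does not bite here. However, two technical steps are under-justified.

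First, the claim that smoothness of $(\tau_M)_*(\pi_M^*\psi)$ follows by ``ordinary fiber integration'' of the defining forms glosses over the cosphere-bundle contribution: for a submersion $f:X\to Y$ the induced map on cosphere bundles is an embedding $f^*\p_Y\hookrightarrow\p_X$, not a submersion, so pushing the boundary form to $\p_Y$ is not literally fiber integration on $\p_X$. The fact that pushforward along a submersion takes $\mathcal V_c^\infty$ to $\mathcal V^\infty$ is true, but you should cite the relevant statement in Alesker's work rather than sketch an inaccurate mechanism.

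Second, and more substantively, the identity $(\pi_M^*\psi)(\tau_M^{-1}\{E\})=\psi(E\cap M)$ is exactly the paper's equation~\eqref{eq:psi_hat}, which the paper establishes via a density argument (approximating $\psi$ by Crofton-type valuations) after a nontrivial wave-front computation. Your justification---``compatibility of submersion pullback with evaluation on a subset projecting diffeomorphically''---is not a general property of submersion pullback. A clean route would be: with $\tilde\jmath_E:E\cap M\to Z_M$, $x\mapsto(x,E)$, one has $\pi_M\circ\tilde\jmath_E=j_E$ (the inclusion $E\cap M\hookrightarrow M$), and then
\[
(\pi_M^*\psi)(\tau_M^{-1}\{E\})=(\tilde\jmath_E^*\pi_M^*\psi)(E\cap M)=(j_E^*\psi)(E\cap M)=\psi(E\cap M),
\]
\emph{provided} one has the functoriality $\tilde\jmath_E^*\circ\pi_M^*=(\pi_M\circ\tilde\jmath_E)^*$ for the composition of an immersion with a submersion. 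This is plausible but not immediate from the separate definitions of pullback for immersions and submersions in Alesker's framework, and needs to be cited or proved.

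By contrast, the paper handles (i) entirely elementarily via Boman's theorem and the $\GL(V)$-action (no pushforward machinery needed), and handles (ii) by the wave-front/density argument culminating in \eqref{eq:psi_hat}. Your route is more conceptual and unified, but trades those self-contained arguments for a reliance on structural facts about Alesker's functorial calculus that, as written, remain unverified.
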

	
	\proof
	\begin{enumerate}
	\item Let us first show $E\mapsto\psi(E\cap M)$ is smooth.   By choosing an open cover of $M$ by star-shaped charts, and using the partition of unity property of smooth valuations \cite{alesker_val_man4}, we may assume $M$ projects diffeomorphically to an open subset of $\mathbb P_+(V)$, which we henceforth identify with $M$. 
	
	By Boman's theorem \cite{boman67}, it suffices to prove that $\psi(E_t \cap M)$ is a smooth function of $t\in (-\epsilon,\epsilon)$ for all smooth curves $E_\bullet:(-\epsilon,\epsilon)\to \Gr_{m+1-k}(V)$. It suffices in fact to show smoothness in some open interval around $0$ for any such given curve. 
	
	Let us lift $E_t$ to a smooth curve $g_t\in \mathrm{GL}(V)$ with $g_0=\id$ and $E_t=g_tE_0$. Then $\psi(E_t\cap M)= g_t^*\psi (E_0 \cap M)$ for sufficiently small $t$ such that $g_t(\Supp(\psi))\subset M$, establishing the first part.
	\item 
	
	Let us check $\psi(\bullet \cap M)=\widehat\psi$ in $C^{-\infty}(\Gr_{m+1-k}(V))$. Take $\mu\in\mathcal M^\infty(\Gr_{m+1-k}(V))$, and write 
	\begin{displaymath}
        \mu=\int_{\Gr_{m+1-k}(V)}\delta_Ed\mu(E)= \int_{\Gr_{m+1-k}(V)}\chi_{\{E\}}d\mu(E)\in\mathcal V^\infty(\Gr_{m+1-k}(V)).
	\end{displaymath}

	\textit{Claim.} $\tau_M^{-1}E\pitchfork \pi_M^*\psi$. 
	
	To see this, write $Z=Z_M$ and identify $W:=Z\times_M\mathbb P_M$ with its image in $\mathbb P_{Z}$ under $d\pi_M^*$. Explicitly,  $W|_{(x,E)}=\mathbb P_+(\mathrm {Ker}(d_{(x,E)}\pi_M)^\perp)$, so $W$ is the union of the conormal bundles to all fibers of $\pi_M$. It follows from \cite[Proposition 3.3.3]{alesker_intgeo} that $\WF(\pi_M^*\psi)\subset(\emptyset, N^*W)$. By Proposition \ref{prop:WF_transversal}, it suffices to check that two intersections in $\mathbb P_{Z}$ are transversal: $\pi^{-1}(\tau_M^{-1}E)\pitchfork W$ and $N^*(\tau_M^{-1}E)\pitchfork W$. 
	
	Denote $z=(x,E)$, let $(z,\zeta)$ be an intersection point. The first intersection is easy to analyze: $T_{z,\zeta}\pi^{-1}(\tau_M^{-1}E)$ contains all vertical directions of $\mathbb P_{Z}$, while $T_{z,\zeta}W$ contains all horizontal directions. 
	
	To analyze the second intersection, we lift all manifolds from $\mathbb P_Z$ to $T^*Z$, and retain all notation for the corresponding objects. As in the previous case, the image of $T_{z,\zeta}W$ under the natural projection $\pi:T_{z,\zeta} T^*Z\to T_zZ$ is all of $T_zZ$, and so it suffices to show $T_{z,\zeta}(T_z^*Z)\subset T_{z,\zeta}W+ T_{z,\zeta}N^*(\tau_M^{-1}E)$.
	
	Since $N_z^*(\pi_M^{-1}x) \subset W$, it suffices to show that
	\begin{displaymath}
	T_{z,\zeta}(T_z^*Z)\subset T_{z,\zeta}N_z^*(\pi_M^{-1}x)+ T_{z,\zeta}N_z^*(\tau_M^{-1}E),
	\end{displaymath}	
	which is the same as 
	\begin{displaymath}
	T_z^*Z\subset N_z^*(\pi_M^{-1}x)+ N_z^*(\tau_M^{-1}E)=N_z^*(T_z\pi_M^{-1}x\cap T_z\tau_M^{-1}E).
	\end{displaymath}
	
	The proof of the claim is completed by noting that the intersection $T_z\pi_M^{-1}x\cap T_z\tau_M^{-1}E$ is trivial.
		
	Consider the set 
	\begin{displaymath}
	X:=\left\{(E,[\xi]): E \in \Gr_{m+1-k}(V), [\xi] \in \WF(\chi_{\tau_M^{-1}E})\right\} \subset \Gr_{m+1-k}(V) \times \p_{\p_Z}.
	\end{displaymath}
	We claim that it is compact. If $X \subset \bigcup_{i \in I} U_i$ is an open cover, then for each $E \in \Gr_{m+1-k}(V)$ we find a finite subcover $X_E \subset \bigcup_{i \in I_E} U_i$ of the compact set
	
\begin{displaymath}
X_E:=X\cap (\{E\} \times \p_{\p_Z})=\{E\} \times \WF(\chi_{\tau_M^{-1}E}).
\end{displaymath}
The map $g \mapsto X_{gE}$ is $\GL(V)$-equivariant, hence there exists some open neighborhood $V_E \subset \Gr_{m+1-k}(V)$  of $E$ such that $X_{E'} \subset \bigcup_{i \in I_E} U_i$ for all $E' \subset V_E$. Now $\Gr_{m+1-k}(V)$ is compact, hence finitely many $V_{E_j}$ cover $\Gr_{m+1-k}(V)$. Then $X \subset \bigcup_j \bigcup_{i \in E_j} U_i$ is a finite subcover, proving the claim. The image of $X$ in $\p_{\p_Z}$ is then a compact set disjoint from $\WF(\pi_M^*\psi)$.

	Thus we can find a closed cone $\Gamma\subset  T^*\mathbb P_{Z}\setminus \underline 0$ such that for all $E\in\Gr_{m+1-k}(V)$, $\chi_{\tau_M^{-1}E}\in\mathcal V^{-\infty}_{(\emptyset, \Gamma)}(Z)$, and $\pi_M^*\psi$ acts as a sequentially continuous functional on the latter space. Thus we can write
	\begin{align*}
	\langle \widehat \psi,\mu\rangle&= \langle \pi_M^*\psi, \tau_M^*\int_{\Gr_{m+1-k}(V)} \chi_{\{E\}}d\mu(E)\rangle\\& = \int_{\Gr_{m+1-k}(V)} \langle \pi_M^*\psi,\chi_{\tau_M^{-1}(E)}\rangle d\mu(E).
	\end{align*}	

It remains to check that 
	\begin{equation}\label{eq:psi_hat}
	\langle \pi_M^*\psi,\chi_{\tau_M^{-1}(E)}\rangle= \psi(E\cap M).
	\end{equation} 	

For a compact submanifold with boundary $A \subset M$ that is transversal to $E\cap M$, we have by \cite[Theorem 5]{alesker_bernig},
\begin{displaymath}
\langle \pi_M^*\chi_A,\chi_{\tau_M^{-1}(E)}\rangle=\chi(\pi_M^{-1}A\cap \tau_M^{-1}(E))=\chi(A\cap E)=\chi_A(E\cap M).
\end{displaymath}
	
It follows by linearity that any smooth valuation of the form $\psi=\int_{\mathcal A} \chi_Ad\nu(A)$, where $\mathcal A$ is a family of submanifolds $A$ as above and $\nu$ a smooth measure, satisfies \eqref{eq:psi_hat}. This family $\mathbf {Cr}_E$ of valuations spans a dense subset in $\mathcal V^\infty(M)$. Indeed, we may approximate $\chi_A$ in $\mathcal V^{-\infty}(M)$ by a sequence in $\mathbf {Cr}_E$ for any $A$ transversal to $E\cap M$. Were $\mathbf {Cr}_E$ not dense, by Alesker-Poincar\'e duality one could find a non-zero smooth valuation $\phi$ annihilating $\mathbf {Cr}_E$, and thus also vanishing on all submanifolds with boundary that are transversal to $E\cap M$. By the genericity of transversality and continuity, $\phi$ would vanish on all submanifolds with boundary. But this is impossible by \cite{bernig_broecker07}. It follows that equality in \eqref{eq:psi_hat} holds for all $\psi$.
\end{enumerate}
\endproof

\begin{Corollary}\label{coro:smooth_on_grassmannian} The map $\Gr_{m+1-k}(V)\to \mathcal V^{-\infty}(M)$, $E\mapsto \chi_{E\cap M}$ is smooth, and for $\mu\in\mathcal M^\infty(\Gr_{m+1-k}(V))$ it holds that $\Cr(\mu)=\int_{\Gr_{m+1-k}(V)}\chi_{E\cap M}d\mu(E)$.	
\end{Corollary}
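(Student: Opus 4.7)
My plan is to deduce both assertions directly from Proposition \ref{prop:smooth_on_grassmannian}, together with the adjointness of the Alesker-Radon transforms.

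For the smoothness of $E \mapsto \chi_{E \cap M}$ viewed as a map $\Gr_{m+1-k}(V) \to \mathcal V^{-\infty}(M)$, I would use that smoothness of a map into the dual space $\mathcal V^{-\infty}(M) = (\mathcal V^\infty_c(M))^*$ is controlled by smoothness of the scalar evaluations $E \mapsto \langle \chi_{E \cap M}, \psi\rangle = \psi(E\cap M)$ for all test valuations $\psi \in \mathcal V^\infty_c(M)$. This is exactly Proposition \ref{prop:smooth_on_grassmannian}(1). Consequently, for $\mu \in \mathcal M^\infty(\Gr_{m+1-k}(V))$, the integral $\int \chi_{E\cap M}\, d\mu(E)$ is well-defined as a Pettis integral in $\mathcal V^{-\infty}(M)$, and its pairing with any $\psi \in \mathcal V^\infty_c(M)$ equals $\int \psi(E\cap M)\, d\mu(E)$ by commuting the (continuous) pairing with the integral over the compact base $\Gr_{m+1-k}(V)$.

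To compare with $\Cr(\mu) = \mathcal R_M^T \mu \in \mathcal W_k^\infty(M)$ (Definition \ref{def:smooth_crofton}), I would test both sides against $\psi \in \mathcal V^\infty_c(M)$. By the adjointness of the Alesker-Radon transforms under Alesker-Poincar\'e duality,
\begin{displaymath}
\langle \mathcal R_M^T \mu,\, \psi\rangle_M \;=\; \langle \mu,\, \mathcal R_M\psi\rangle_{\Gr_{m+1-k}(V)}.
\end{displaymath}
Since $\mu$ lies in the deepest filtration piece $\mathcal M^\infty = \mathcal W_{\dim \Gr}^\infty$, the Alesker-Poincar\'e pairing on the right-hand side depends only on the image of $\mathcal R_M\psi$ in $\mathcal W_0^{-\infty}/\mathcal W_1^{-\infty} = C^{-\infty}(\Gr_{m+1-k}(V))$, which by definition is the defining current $\widehat\psi$, and it reduces to the distributional pairing $\int \widehat\psi\, d\mu$. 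Proposition \ref{prop:smooth_on_grassmannian}(2) then identifies $\widehat\psi$ with the smooth function $E \mapsto \psi(E\cap M)$, yielding again $\int \psi(E\cap M)\, d\mu(E)$. Since the two sides agree for every $\psi$, they coincide in $\mathcal V^{-\infty}(M)$.

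The only step requiring care is the reduction of the abstract Alesker-Poincar\'e pairing between a smooth measure and a generalized valuation to the distributional pairing of the measure against the zeroth-filtration defining current. This rests on the compatibility of Alesker-Poincar\'e duality with the valuation filtration, and is the one place where one must remember that a smooth measure sits in the bottom of the filtration on $\mathcal V^\infty(\Gr_{m+1-k}(V))$; everything else is formal bookkeeping on top of Proposition \ref{prop:smooth_on_grassmannian}.
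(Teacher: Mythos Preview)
Your proposal is correct and follows essentially the same reasoning the paper leaves implicit: the corollary is stated without proof, as an immediate consequence of Proposition \ref{prop:smooth_on_grassmannian}, and your argument simply spells out the two steps (scalar smoothness gives weak/Pettis smoothness of $E\mapsto \chi_{E\cap M}$, and adjointness of $\mathcal R_M$ and $\mathcal R_M^T$ together with the filtration identification $\widehat\psi=[\mathcal R_M\psi]$ reduces $\langle \Cr(\mu),\psi\rangle$ to $\int \widehat\psi\,d\mu=\int\psi(E\cap M)\,d\mu(E)$). The only point worth flagging is that ``weak smoothness implies smoothness'' into $\mathcal V^{-\infty}(M)$ relies on standard completeness/nuclearity properties of the valuation spaces, which the paper also takes for granted.
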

	
\begin{Proposition} \label{prop_extension_crofton_map}
	The map $\Cr:\mathcal M^\infty(\Gr_{m+1-k}(V))\to\mathcal W_k^\infty(M)$ extends to a continuous map $\Cr: \mathcal M^{-\infty}(\Gr_{m+1-k}(V))\to \mathcal W_k^{-\infty}(M)$, by setting, for all $\psi \in \mathcal V^\infty_c(M)$,
\begin{displaymath}
\left\langle \Cr(\mu), \psi\right\rangle := \int_{\Gr_{m+1-k}(V)} \psi(E\cap M)d\mu(E).  
\end{displaymath}
\end{Proposition}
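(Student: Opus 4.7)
The plan is to define $\Cr(\mu)\in\mathcal V^{-\infty}(M)$ via the stated pairing, and then to verify three things: well-definedness and continuity in the test valuation $\psi$, continuity of the extension in $\mu$, and that the image lies in the filtration step $\mathcal W_k^{-\infty}(M)$. Agreement with the smooth case will be immediate from Corollary \ref{coro:smooth_on_grassmannian}.

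By Proposition \ref{prop:smooth_on_grassmannian}(i), for each $\psi\in\mathcal V_c^\infty(M)$ the function $T\psi\colon E\mapsto \psi(E\cap M)$ lies in $C^\infty(\Gr_{m+1-k}(V))$. Since the Grassmannian is compact, the pairing $\langle\mu, T\psi\rangle$ makes sense for any $\mu\in\mathcal M^{-\infty}(\Gr_{m+1-k}(V))$, so I set $\langle\Cr(\mu),\psi\rangle := \langle\mu, T\psi\rangle$. To check continuity in $\psi$, I need continuity of the linear map $T\colon\mathcal V_c^\infty(M)\to C^\infty(\Gr_{m+1-k}(V))$. Making the argument of Proposition \ref{prop:smooth_on_grassmannian}(i) quantitative, I choose a local smooth section $g\colon U\to\GL(V)$ of $\GL(V)\to\Gr_{m+1-k}(V)$ over a neighborhood $U$ of $E_0$ with $g(E_0)=\id$; for $\psi$ supported in a star-shaped chart we then have $T\psi(E)=(g_E^*\psi)(E_0\cap M)$ for $E\in U$. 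Hence iterated $E$-derivatives of $T\psi$ at $E_0$ reduce to evaluating at the fixed polyhedron $E_0\cap M$ certain continuous linear operations on $\psi$ built from the $\GL(V)$-action on the Fr\'echet space $\mathcal V_c^\infty(M)$, which together with continuity of evaluation at a fixed $A\in\mathcal P(M)$ yields continuity of $T$. For general $\psi$ one decomposes by a partition of unity.

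The containment $\Cr(\mu)\in\mathcal W_k^{-\infty}(M)$ and the continuity of the extended map then follow together by density. Approximate $\mu$ in the weak-$*$ topology by a net of smooth measures $\mu_\alpha$, for instance by convolution on the compact homogeneous space $\Gr_{m+1-k}(V)$. For every $\psi\in\mathcal V_c^\infty(M)$ one has $\langle\Cr(\mu_\alpha),\psi\rangle=\langle\mu_\alpha,T\psi\rangle\to\langle\mu,T\psi\rangle=\langle\Cr(\mu),\psi\rangle$, so $\Cr(\mu_\alpha)\to\Cr(\mu)$ weakly. By Corollary \ref{coro:smooth_on_grassmannian}, $\Cr(\mu_\alpha)\in\mathcal W_k^\infty(M)\subset\mathcal W_k^{-\infty}(M)$, and since $\mathcal W_k^{-\infty}(M)$ is weakly closed the limit lies there too. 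The same argument gives continuity of $\Cr\colon\mathcal M^{-\infty}(\Gr_{m+1-k}(V))\to\mathcal W_k^{-\infty}(M)$ in the weak-$*$ topologies.

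The main obstacle is the continuity of $T$. While the local representation $T\psi(E)=(g_E^*\psi)(E_0\cap M)$ makes smoothness transparent and underlies Proposition \ref{prop:smooth_on_grassmannian}(i), obtaining quantitative estimates on the $C^\infty$ seminorms of $T\psi$ in terms of the Fr\'echet seminorms of $\psi$ requires explicitly expanding in the defining pair $(\phi,\omega)$ of $\psi$ and carefully tracking the Jacobians of $g_E$ and of its lift to the cosphere bundle.
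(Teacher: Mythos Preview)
Your proof is correct and takes the same approach as the paper's, which is even terser: it simply invokes Proposition~\ref{prop:smooth_on_grassmannian} for well-definedness, Corollary~\ref{coro:smooth_on_grassmannian} for agreement with the smooth case, and declares continuity ``equally evident'' without discussing $T$ or the filtration step at all. The obstacle you flag---continuity of $T:\mathcal V_c^\infty(M)\to C^\infty(\Gr_{m+1-k}(V))$---dissolves via the closed graph theorem: if $\psi_n\to\psi$ and $T\psi_n\to f$ in $C^\infty$, then for each $E$ the continuous evaluation $\psi\mapsto\psi(E\cap M)$ gives $f(E)=\lim\psi_n(E\cap M)=T\psi(E)$, so no explicit estimates are needed.
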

	
\proof
The right hand side is well-defined for a generalized measure $\mu$, since the function $E \mapsto \psi(E \cap M)$ is smooth by Proposition \ref{prop:smooth_on_grassmannian}. Take $\mu\in\mathcal M^\infty(\Gr_{m+1-k}(V))$, $\psi\in\mathcal V_c^\infty(M)$. To verify this new definition extends the smooth one, we ought to check that
\begin{displaymath} 
\langle (\pi_M)_*\tau_M^*\mu, \psi\rangle =  \int_{\Gr_{m+1-k}(V)} \psi(E\cap M)d\mu(E), 
\end{displaymath}
which is the content of Corollary \ref{coro:smooth_on_grassmannian}. Continuity is equally evident. 
\endproof
	
\begin{Remark} \label{rem:radon_fail2}
It is tempting to define $\Cr(\mu)$ as a Radon transform: $\Cr(\mu)=\mathcal R_M^T\mu$, as defined in \cite{alesker_intgeo}. Unfortunately the conditions of \cite[Corollary 4.1.7]{alesker_intgeo},  which guarantee that the transform is well-defined on generalized valuations, do not hold for general $k$, as can be seen by a simple dimension count.
\end{Remark}

	\subsection{Functorial properties of Crofton measures.} \label{sec:restriction_crofton}
	The following is a partial summary of the results of \cite[Appendix B]{faifman_crofton} (adapted from the affine to the linear Grassmannian), whereto we refer the reader for further details. 
	
	Let $j: U^r\hookrightarrow V^d$ be an inclusion of a linear subspace.  There is then a well-defined operation of restriction 
	\begin{displaymath}
	j^*:\mathcal M^\infty(\Gr_k(V)) \to \mathcal M^\infty(\Gr_{k-(d-r)}(U)),
	\end{displaymath}
	which is the pushforward under the (almost everywhere defined) map $J_U: E\mapsto j^{-1}(E)=E\cap U$.
	
	Let $S_U\subset\Gr_{k}(V)$ be the collection of subspaces intersecting $U$ non-generically, and fix a closed cone $\Gamma \subset T^*\Gr_k(V)\setminus 0$ such that $\Gamma\cap N^*S_U=\emptyset$. Given $k\geq d-r$, let $\mathcal M^{-\infty}_\Gamma (\Gr_k(V))$ denote the set of generalized measures (distributions) $\mu$ whose wave front sets lie in $\Gamma$, equipped with the H\"ormander topology. 
	
	The map $j^*$ extends as a sequentially continuous map 
	\begin{displaymath}
	j^*:\mathcal M_\Gamma^{-\infty}(\Gr_k(V)) \to \mathcal M^{-\infty}(\Gr_{k-(d-r)}(U)).
	\end{displaymath}
	
	Similarly, if $\pi :V\to W$ is a quotient map, there is a natural pushforward operation 
	\begin{displaymath}
	\pi_*: \mathcal M^\infty(\Gr_k(V))\to \mathcal M^\infty(\Gr_{k}(W)),
	\end{displaymath}
	which is the pushforward under the  (almost everywhere defined) map $\Pi_W: E\mapsto \pi(E)$. It extends to distributions whose wave front sets are disjoint from the conormal cycle of the collection of subspaces intersecting $\Ker \pi$ non-generically. 
	
	The following proposition captures the intuitively obvious fact that the pullback of distributions/valuations under embeddings commutes with the Crofton map. We prove a weak version which suffices for our purposes.
	
	Recall that $M$ is a locally star-shaped hypersurface around the origin.
	
	\begin{Proposition}\label{prop:restrictions_commute} Take a submanifold $M^r\subset V^d$, a subspace $j: U\hookrightarrow V$ such that $Z:=M\cap j(U)$ is a submanifold, and a distribution $\mu\in\mathcal M_\Gamma^{-\infty}(\Gr_{d-k}(V))$. Assume $\Cr_M(\mu)$ is transversal to $Z$ in the sense of \cite[Definition 3.5.2]{alesker_intgeo}. Then $\Cr_M(\mu)|_Z=\Cr_Z(j^*\mu)$.
	\end{Proposition}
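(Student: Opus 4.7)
The approach is to verify the identity first for smooth Crofton measures by direct calculation, and then extend to the distributional case by a density-plus-continuity argument, with the transversality hypothesis ensuring that the restriction operation on the left-hand side is well-defined and continuous in a neighborhood of $\mu$.

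\textbf{Smooth case.} First I would assume $\mu \in \mathcal M^\infty(\Gr_{d-k}(V))$. In this case both sides are smooth valuations on $Z$, and by Alesker--Poincar\'e duality it suffices to check equality on compact differentiable polyhedra $A \in \mathcal P(Z)$ that are transversal to the relevant strata. By Proposition \ref{prop_extension_crofton_map} and Corollary \ref{coro:smooth_on_grassmannian}, the left-hand side evaluates to
\begin{displaymath}
\Cr_M(\mu)|_Z(A) = \Cr_M(\mu)(A) = \int_{\Gr_{d-k}(V)} \chi(E \cap A)\, d\mu(E),
\end{displaymath}
since $A \subset Z \subset M$. For the right-hand side, by the definition of $j^*$ as pushforward under $J_U \colon E \mapsto E \cap U$, and again by Proposition \ref{prop_extension_crofton_map} applied on $Z \subset U$,
\begin{displaymath}
\Cr_Z(j^*\mu)(A) = \int_{\Gr_{r-k}(U)} \chi(F \cap A)\, d(j^*\mu)(F) = \int_{\Gr_{d-k}(V)} \chi\bigl((E \cap U) \cap A\bigr)\, d\mu(E).
\end{displaymath}
Since $A \subset U$, the inner intersections $E \cap A$ and $(E \cap U) \cap A$ coincide, proving the equality. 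Note that the pushforward formula is valid only off the singular locus $S_U$, but $S_U$ has positive codimension so this is irrelevant for smooth $\mu$.

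\textbf{Extension to distributions.} For the general case, I would use density of smooth measures in $\mathcal M_\Gamma^{-\infty}(\Gr_{d-k}(V))$ in the H\"ormander topology. Pick a sequence $\mu_n \to \mu$ with $\mu_n \in \mathcal M^\infty$ and $\WF(\mu_n) \subset \Gamma$ uniformly. By the functorial properties of $j^*$ recalled in Section \ref{sec:restriction_crofton}, $j^*\mu_n \to j^*\mu$ sequentially in $\mathcal M^{-\infty}(\Gr_{r-k}(U))$. By Proposition \ref{prop_extension_crofton_map} the map $\Cr_Z$ is sequentially continuous, so $\Cr_Z(j^*\mu_n) \to \Cr_Z(j^*\mu)$ in $\mathcal V^{-\infty}(Z)$. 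Similarly $\Cr_M(\mu_n) \to \Cr_M(\mu)$ in $\mathcal V^{-\infty}(M)$. The transversality hypothesis, interpreted through the wave front criterion of \cite[Definition 3.5.2]{alesker_intgeo}, guarantees that restriction to $Z$ is defined and sequentially continuous on the subset of generalized valuations whose wave fronts lie in a suitable cone determined by $\Gamma$ and the geometry of $Z \subset M$. Taking limits on both sides of the identity established in the smooth case yields the result.

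\textbf{Main obstacle.} The delicate point is controlling the wave front set of $\Cr_M(\mu_n)$ uniformly in $n$, and ensuring that this wave front remains transversal to $N^*Z$ in the limit so that restriction $|_Z$ is a sequentially continuous operation on the approximating sequence. This requires tracking how the Radon double fibration $M \xleftarrow{\pi_M} Z_M \xrightarrow{\tau_M} \Gr_{d-k}(V)$ propagates wave fronts: the key estimate is that $\WF(\Cr_M(\nu))$ is controlled by the image of $\WF(\nu) \cap \Gamma$ under the canonical relation of $\mathcal R_M^T$, where Lemma \ref{lem:radon_condition} ensures this transform is microlocally well-behaved. Combined with the hypothesis that $\Cr_M(\mu)$ is already transversal to $Z$, a standard argument on open neighborhoods in the wave front topology ensures the same transversality holds for the approximants, permitting the passage to the limit.
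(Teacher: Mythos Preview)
Your approach is essentially the same as the paper's: establish the identity for smooth measures by direct computation on polyhedra, then approximate $\mu$ by smooth measures and pass to the limit using continuity of restriction in the H\"ormander topology. The paper chooses the approximants concretely as $\mu_i=\mu*\rho_i$ with $\rho_i$ an approximate identity in $\mathcal M^\infty(\GL(V))$, whereas you invoke abstract density; this is the only real difference. The paper likewise does not track wave fronts through the Radon transform but simply notes that the $\Cr_M(\mu_i)$ are smooth (hence have empty wave front set) and cites \cite[Claim 3.5.4]{alesker_intgeo} for the continuity of restriction on $\mathcal V^{-\infty}_{\WF(\Cr_M(\mu))}(M)$, so your ``main obstacle'' paragraph is addressing the same point the paper disposes of by citation.
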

	
	\proof
	Choose an approximate identity $\rho_i\in\mathcal M^\infty(\textrm{GL}(V))$ as $i\to\infty$, and set $\mu_i=\mu\ast\rho_{i}\in\mathcal M^\infty(\Gr_{d-k}(V))$. For all $A\in \mathcal P(Z)$ we have
	\begin{displaymath}
	\Cr_M(\mu_i)(A)= \int_{\Gr_{d-k}(V)} \chi(A \cap E)d\mu_i(E)=\int_{\Gr_{r-k}(U)}  \chi(A \cap E)d((J_U)_*\mu_i)(E),
	\end{displaymath} 
	and therefore $\Cr_M(\mu_i)|_Z=\Cr_Z(j^*\mu_i)$. The restriction of valuations to a submanifold is continuous in the H\"ormander topology on the space of valuations with wave front set contained in $\textrm{WF}(\Cr_M(\mu))$, see \cite[Claim 3.5.4]{alesker_intgeo}. Thus the left hand side weakly converges to $\Cr_M(\mu)|_Z$. The right hand side weakly converges to $\Cr_Z(\mu)$.
	\endproof
	
	\subsection{Applying generalized Crofton formulas to subsets}\label{sec:crofton_functorial2}
	
	Let $M^m\subset V=\R^{m+1}$ be a strictly star-shaped hypersurface around the origin. Given $A\in\mathcal P(M)$ and a Crofton distribution $\mu\in\mathcal M^{-\infty}(\Gr_{m+1-k}(V))$, we would like to evaluate $\Cr(\mu)$ on $A$ using an explicit Crofton integral, whenever $A\pitchfork\Cr(\mu)$.
	
		The following proposition provides some a-priori regularity for $\widehat\chi_A$.
	
	\begin{Proposition}\label{prop:yomdin}	
		For $A\in\mathcal P(M)$, it holds that $\chi(A\cap \bullet)\in L^1(\Gr_{m+1-k}(V))$, is finite and locally constant on $W_A:=\{E:E\pitchfork A\}$. Furthermore, $\widehat \chi_A=\chi(A \cap \bullet)$.
	\end{Proposition}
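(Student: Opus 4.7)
The plan is to establish three facts in the order stated: (i) $\chi(A\cap\bullet)$ is finite and locally constant on $W_A$; (ii) it extends to an $L^\infty$ (hence $L^1$) function on $\Gr_{m+1-k}(V)$; and (iii) this function represents $\widehat\chi_A$ as a generalized function. Parts (i)--(ii) are geometric, relying on transversality and a Yomdin-type complexity bound, while (iii) follows by testing against smooth measures and invoking the adjointness between $\mathcal R_M$ and $\mathcal R_M^T$.

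For (i), fix $E_0\in W_A$ and a smooth path $E_t$ through $E_0$. Lift to $g_t\in\GL(V)$ with $g_0=\id$ and $E_t=g_tE_0$; then $A\cap E_t$ is the image under $g_t$ of $(g_t^{-1}A)\cap E_0$, where $g_t^{-1}A$ varies smoothly through $A$. The stratified Thom isotopy lemma applied inside $E_0$ yields homeomorphisms $A\cap E_t\cong A\cap E_0$ for small $t$. Hence $\chi(A\cap E)$ is locally constant and finite on $W_A$, being the Euler characteristic of a compact differentiable polyhedron.

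For (ii), the complement of $W_A$ has measure zero by stratum-wise Sard. To bound $|\chi(A\cap E)|$ uniformly, pick a finite smooth triangulation of $A$: each closed simplex meets $E\in W_A$ in a transverse slice with a number of cells controlled by the simplex's combinatorics, and summing produces a CW decomposition of $A\cap E$ with at most $C(A)$ cells. Thus $|\chi(A\cap E)|\leq C(A)$ on $W_A$, and extension by zero places $\chi(A\cap\bullet)$ in $L^\infty(\Gr_{m+1-k}(V))$. For (iii), pair $\widehat\chi_A$ with a smooth $\mu\in\mathcal M^\infty(\Gr_{m+1-k}(V))$: by Definition \ref{def:smooth_crofton} and adjointness (noting $\mu$ lies in the deepest filtration piece, so the pairing with $\mathcal R_M\chi_A$ sees only its leading part $\widehat\chi_A$),
\[
\langle\widehat\chi_A,\mu\rangle=\langle\mathcal R_M\chi_A,\mu\rangle=\langle\chi_A,\mathcal R_M^T\mu\rangle=\Cr(\mu)(A).
\]
By Corollary \ref{coro:smooth_on_grassmannian} one has $\Cr(\mu)=\int\chi_{E\cap M}\,d\mu(E)$, and evaluating at $A$ commutes with the integral. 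For $\mu$-almost every $E$ the slice $E\cap M$ is transversal to $A$, so $\chi_{E\cap M}(A)=\chi(A\cap E)$; the $L^\infty$ bound from (ii) justifies Fubini. Thus $\langle\widehat\chi_A,\mu\rangle=\int\chi(A\cap E)\,d\mu(E)$ for every smooth $\mu$, which is the desired identification in $C^{-\infty}(\Gr_{m+1-k}(V))$.

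The main obstacle is the uniform bound in (ii): a compact differentiable polyhedron need not be semialgebraic, so one must either invoke a smooth triangulation theorem (reducing each curved stratum to standard simplices, whose transverse sections are easily counted) or appeal to a Yomdin-type bound on Betti numbers of transverse slices. A secondary subtlety in (iii) is the commutativity of evaluation on $A$ with the integral representing $\Cr(\mu)$, which reduces via dominated convergence to the uniform $L^\infty$ bound from (ii).
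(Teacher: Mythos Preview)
Your step (ii) contains a genuine error: the uniform $L^\infty$ bound on $\chi(A\cap E)$ over $E\in W_A$ is false for general $A\in\mathcal P(M)$. Take $M=S^2\subset\R^3$, $k=1$, and let $A$ be a closed $C^\infty$ curve which, in a chart near some point $p$, agrees with the graph of $t\mapsto e^{-1/t^2}\sin(1/t)$. Great circles $E_c$ approaching the one tangent to $A$ at $p$ meet $A$ transversally (for generic $c$) in a number of points tending to infinity. Neither a smooth triangulation nor a Yomdin-type estimate helps: Yomdin's bounds control complexity of slices in terms of $C^r$ norms of fixed finite order and do not give a uniform bound for arbitrary $C^\infty$ polyhedra. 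The paper accordingly claims only $L^1$ integrability and obtains it by quoting \cite[Lemma A.2]{bernig_fu_solanes} (pushing forward an $L^1$ function on $\SO(V)$ to the Grassmannian), not by any pointwise bound.

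The collapse of (ii) also breaks your step (iii). Your chain $\langle\widehat\chi_A,\mu\rangle=\langle\chi_A,\mathcal R_M^T\mu\rangle=\Cr(\mu)(A)$ is fine, but the last move---evaluating the weak integral $\Cr(\mu)=\int\chi_{E\cap M}\,d\mu(E)$ from Corollary~\ref{coro:smooth_on_grassmannian} at the non-smooth object $\chi_A$ and commuting with the integral---is not a Fubini or dominated-convergence matter. That integral is defined only through pairings with smooth $\psi\in\mathcal V_c^\infty(M)$; plugging in $\chi_A$ requires an approximation of $\chi_A$ by smooth valuations in a topology in which both sides are continuous, and without the (false) $L^\infty$ bound you have no domination to pass to the limit. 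The paper handles exactly this point by convolving $\chi_A$ with an approximate identity $\rho_j$ on $\SO(V)$: the resulting $\phi_j=\chi_A*\rho_j$ are smooth, one has $\widehat{\phi_j}=\widehat{\chi_A}*\rho_j$ by equivariance, and separately $\phi_j(\bullet\cap M)=\chi(A\cap\bullet)*\rho_j$ via the identification $\phi_j=\int\chi(gA\cap\bullet)\,d\rho_j(g)$ from \cite[Theorem A.1]{bernig_fu_solanes}. Passing to the limit then uses only the $L^1$ convergence $\chi(A\cap\bullet)*\rho_j\to\chi(A\cap\bullet)$.
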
	
	
	\proof	
	Let us first check that $\chi(A \cap \bullet)\in L^1 ( \Gr_{m+1-k}(V))$. Fix a Euclidean structure on $V$ and identify $M$ with the unit sphere. By \cite[{Lemma A.2}]{bernig_fu_solanes}, for a fixed $E_0\in\Gr_{m+1-k}(V)$ we have $[g \mapsto \chi(A\cap gE_0)] \in L^1(\SO(V))$. Let $dg, dE$ be the Haar measures on $\SO(V)$ and $\Gr_{m+1-k}(V)$ respectively, and $p:\SO(V)\to \Gr_{m+1-k}(V)$ given by $g\mapsto gE_0$. Then $p_*(\chi(A\cap gE_0)dg)=\chi(A\cap E)dE$, and so $\chi(A\cap E)$ is integrable. 
	It is evidently finite and locally constant on $W_A$
	
	It remains to check that $\widehat \chi_A=\chi(A\cap \bullet)$. Take an approximate identity ${ \rho}_j\in\mathcal M^\infty(\SO(V))$, which for convenience we assume invariant under inversion. 
	
	Consider the convolution $\phi_j:=\chi_A\ast { \rho_j}\in\mathcal V^{-\infty}(M)$. As $\SO(V)$ is transitive on $M$ and $\mathbb P_M$, it follows that the defining currents of $\phi_j$ are smooth, and therefore $\phi_j\in\mathcal V^\infty(M)$.
	
	By \cite[Theorem A.1]{bernig_fu_solanes}, $\tilde \phi_j:=\int_{\SO(V)}\chi(gA\cap \bullet)d{ \rho}_j(g)$ is a well-defined smooth valuation. Let us show that $\tilde\phi_j=\phi_j$. Take $\psi\in\mathcal V_c^\infty(M)$ and compute: 
	\begin{displaymath}
	\langle\tilde\phi_j, \psi\rangle= \int_{\SO(V)}\psi(gA\cap M)d{ \rho}_j(g) =\int_{\SO(V)}\psi(gA)d{ \rho}_j(g)
	\end{displaymath}  
	by \cite{bernig_fu_solanes}, while 
	\begin{displaymath}
	\langle \phi_j, \psi\rangle=\langle \chi_A, \psi\ast { \rho}_j\rangle =(\psi\ast\nu_j)(A)=\int_{\SO(V)}\psi(gA)d{ \rho}_j(g).
	\end{displaymath}	
	
	Equality now follows by Alesker-Poincar\'e duality.  We thus have the following equalities of functions on $\Gr_{m+1-k}(V)$:
	\begin{displaymath}
	\phi_j( { \bullet} \cap M)=\tilde\phi_j({ \bullet}\cap M)=\int_{\SO(V)}\chi(gA\cap { \bullet})d{ \rho}_j(g)=\chi(A\cap \bullet)\ast { \rho}_j,
	\end{displaymath}
	where the right hand side is the convolution of $\chi(A\cap \bullet)\in L^1(\Gr_{m+1-k}(V))$ with ${ \rho}_j$. It follows that $\phi_j(E\cap M)\to \chi(A\cap E)$ in $L^1(\Gr_{m+1-k}(V))$. 
	
	Fix $\mu\in\mathcal M^\infty(\Gr_{m+1-k}(V))$. By Proposition \ref{prop:smooth_on_grassmannian} and $\GL(V)$-equivariance, 
	\begin{align*}
	\langle \widehat \chi_A,\mu \rangle &= \lim_{j\to\infty} \langle \widehat \chi_A \ast { \rho}_j, \mu\rangle = \lim_{j\to\infty} \langle \widehat {\chi_A\ast { \rho}_j}, \mu\rangle\\ &=\lim_{j\to\infty} \int_{\Gr_{m+1-k}(V)} \phi_j(E\cap M) d\mu(E)
	=\int_{\Gr_{m+1-k}(V)} \chi(A\cap E) d\mu(E), 
	\end{align*}	
	and so $\widehat \chi_A=\chi(A\cap \bullet)$.
	\endproof	
	
	\begin{Definition}
		The \emph{$k$-Crofton wave front} of $A\in\mathcal P(M)$ is $\Cr\WF^k(A):=\WF(\widehat\chi_A)\subset T^*\Gr_{m+1-k}(V)$.
	\end{Definition}
	As $\widehat \chi_A$ is real-valued, $\Cr\WF^k(A)$ must be symmetric under the fiberwise antipodal map.
	\begin{Proposition}\label{prop:apply_crofton_general}
		Assume $\Cr\WF^k(A)\cap \WF(\mu)=\emptyset$. 
		Then \begin{equation}\label{eq:crofton_applicable}
		\Cr(\mu)(A)=\int_{\Gr_{m+1-k}(V)}\chi(A\cap E)d\mu(E).
		\end{equation}
	\end{Proposition}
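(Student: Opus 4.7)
The plan is to approximate $\mu$ by smooth measures and verify \eqref{eq:crofton_applicable} by passing to the limit on both sides. Fix an approximate identity $\rho_j \in \mathcal M^\infty(\SO(V))$ and set $\mu_j := \mu \ast \rho_j \in \mathcal M^\infty(\Gr_{m+1-k}(V))$. Standard wave front estimates for convolutions with smoothing kernels along a transitive group action yield $\mu_j \to \mu$ in the H\"ormander topology on distributions whose wave fronts lie in any prescribed closed conic neighborhood $\Gamma'$ of $\WF(\mu)$. Shrinking $\Gamma'$ if necessary, I may arrange that $\Cr\WF^k(A) \cap s^*\Gamma' = \emptyset$, so that the disjointness hypothesis persists for every $\mu_j$.

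For each smooth $\mu_j$, identity \eqref{eq:crofton_applicable} is just Definition \ref{def:smooth_crofton} combined with Proposition \ref{prop:yomdin}: the latter identifies $\widehat\chi_A$ with the locally constant $L^1$-function $E \mapsto \chi(A \cap E)$, whence
\[
\Cr(\mu_j)(A) = \int_{\Gr_{m+1-k}(V)} \widehat\chi_A \, d\mu_j = \int_{\Gr_{m+1-k}(V)} \chi(A\cap E) \, d\mu_j(E).
\]

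For the right-hand side of \eqref{eq:crofton_applicable}, the fixed distribution $\widehat\chi_A$ has wave front $\Cr\WF^k(A)$, disjoint from $s^*\Gamma'$; the pairing of distributions with disjoint wave fronts is sequentially continuous in H\"ormander's topology, so $\int \chi(A\cap E) \, d\mu_j(E) \to \int \chi(A\cap E) \, d\mu(E)$. For the left-hand side, Lemma \ref{lem:radon_condition} combined with the sequential continuity of pullback and pushforward of generalized valuations along a submersion implies that $\Cr(\mu_j) \to \Cr(\mu)$ in the H\"ormander topology on generalized valuations whose wave fronts lie in a controlled closed conic neighborhood of $\WF(\Cr(\mu))$. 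Since $A \pitchfork \Cr(\mu)$ is an open condition on the wave front cones (via the criterion of \cite[Theorem 8.3]{alesker_bernig}), WF-transversality persists for $\Cr(\mu_j)$ when $j$ is large, and continuity of the Alesker product under WF-transversality gives $\Cr(\mu_j)(A) = \langle \chi_A, \Cr(\mu_j)\rangle \to \langle \chi_A, \Cr(\mu)\rangle = \Cr(\mu)(A)$. Combining both limits yields the identity.

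The principal obstacle lies in the left-hand side: one must propagate the wave front control from the approximating sequence $\mu_j$ through the Radon transform $\mathcal R_M^T$ to obtain a uniform enclosure of $\WF(\Cr(\mu_j))$ inside a neighborhood of $\WF(\Cr(\mu))$ still compatible with pairing against $\chi_A$. This is not a new result but a careful bookkeeping within the wave front calculus for generalized valuations established earlier in the paper and in the cited literature, with no fresh computation required.
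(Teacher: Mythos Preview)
Your approach has a genuine gap, and the paper's own Remark \ref{rem:radon_fail2} pinpoints exactly where it fails. You approximate $\mu$ by smooth $\mu_j$ and need $\Cr(\mu_j)(A) \to \Cr(\mu)(A)$; for this you want $\Cr(\mu_j) \to \Cr(\mu)$ in a H\"ormander-type topology on generalized valuations in which pairing against $\chi_A$ is continuous. That requires propagating wave front control through $\Cr = \mathcal R_M^T = (\pi_M)_*\tau_M^*$. But Remark \ref{rem:radon_fail2} states that the hypotheses of \cite[Corollary 4.1.7]{alesker_intgeo} for $\mathcal R_M^T$ on generalized inputs \emph{do not hold} for general $k$; Lemma \ref{lem:radon_condition} checks the submersivity condition for the \emph{other} direction $\mathcal R_M$, not for $\mathcal R_M^T$. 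So the ``careful bookkeeping'' you invoke in your last paragraph is not available: Proposition \ref{prop_extension_crofton_map} only gives weak-$*$ continuity of $\Cr$ against smooth compactly supported valuations, which is insufficient to evaluate at $\chi_A$.

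The paper avoids this obstacle by approximating on the other side: it mollifies $\chi_A$ rather than $\mu$, setting $\psi_j = \int_{\GL(V)} g^*\chi_A\, d\rho_j(g)$. Then $\psi_j \to \chi_A$ in $\mathcal V^{-\infty}_{\WF(\chi_A)}(M)$, and by $\GL(V)$-equivariance of $\phi \mapsto \widehat\phi$ one gets $\widehat\psi_j \to \widehat\chi_A$ in $C^{-\infty}_{\WF(\widehat\chi_A)}(\Gr_{m+1-k}(V))$ for free, without any wave front calculus through the Radon transform. The identity $\langle \Cr(\mu), \psi_j\rangle = \langle \mu, \widehat\psi_j\rangle$ holds for smooth $\psi_j$ by Propositions \ref{prop_extension_crofton_map} and \ref{prop:smooth_on_grassmannian}, and both sides pass to the limit using only the two hypotheses of the proposition. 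The asymmetry between the two directions of the Radon transform is precisely why the paper's choice of which object to smooth is not merely a stylistic preference.
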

	
	\proof
	We identify $M$ with $\mathbb P_+(V)$. Then $\mathcal V^{-\infty}(M)\to C^{-\infty}(\Gr_{m+1-k}(V))$, $\phi\mapsto\widehat \phi$ is $\GL(V)$-equivariant. Consider the sequence of smooth valuations $\psi_j$ given by $\psi_j=\int_{\GL(V)}g^*\chi_A\cdot d \rho_j(g)$, where $ \rho_j$ is a compactly supported approximate identity on $\GL(V)$. Clearly $\psi_j\to\chi_A$ in the H\"ormander topology of $\mathcal V^{-\infty}_{\WF(\chi_A)}(M)$. By $\GL(V)$-equivariance we have that 
	\begin{displaymath}
	\widehat\psi_j= \int_{\GL(V)}g^*\widehat \chi_A \cdot d \rho_j(g) \to\widehat\chi_A
	\end{displaymath}
	in $C^{-\infty}_{\WF(\widehat\chi_A)}(\Gr_{m+1-k}(V))$. 
	
	It holds by Propositions \ref{prop_extension_crofton_map} and \ref{prop:smooth_on_grassmannian} that 
	\begin{displaymath}
	\langle \Cr(\mu), \psi_j\rangle = \int_{\Gr_{m+1-k}(V)} \psi_j(E\cap M)d\mu(E)=\langle \mu,\widehat \psi_j\rangle. 
	\end{displaymath}
	As $j \to \infty$, the left hand side converges to $\langle \Cr(\mu),\chi_A\rangle=\Cr(\mu)(A)$, as $A \pitchfork \Cr(\mu)$. The right hand side converges to $\langle \mu,\widehat \chi_A\rangle$ (since $\WF(\mu) \cap \WF(\widehat \chi_A)=\emptyset$), which is the same as $\int_{\Gr_{m+1-k}(V)}\chi(A\cap E)d\mu(E)$ by Proposition \ref{prop:yomdin}.
	\endproof
	
	Determining $\Cr\WF^k(A)$ precisely appears to be difficult in general. Let us focus on a subset $A\in\mathcal P(M)$ which is either a compact domain with smooth boundary, or a compact hypersurface without boundary.
	
	For the following, we write $H=H(A)$ for $\partial A$ if $A$ is of full dimension, and for $A$ when it is a hypersurface. Write $\widehat E:=E\cap M$, and note that $E$ intersects $H$ transversally in $V$ if and only if $\widehat E$ intersects $H$ transversally in $M$. 
	Denote 
	\begin{align*}
	\widetilde B_H & :=\{(x,E)\in Z_H: T_x\widehat E\subset T_xH\}, \\
	B_H & : =\tau_H(\widetilde B_H)\subset\Gr_{m+1-k}(V).	 
	\end{align*}
It is not hard to see that $\widetilde B_H$ is an embedded submanifold of $Z_H$ of dimension 
\begin{align}\notag
  \dim \widetilde B_H&= \dim H+(m-k)(\dim H-(m-k))\\&=k(m+1-k)-1=\dim \Gr_{m+1-k}(V)-1. \label{eq_dimension_tildebh}
\end{align}

If $(x,E)\in \widetilde B_H$, we say that $x\in H$ is a tangent point for $E$. Observe also that $W_A= B_H^c$. 

Write $\tilde \tau_H$ for the restriction $\tau_H|_{\widetilde B_H}:\widetilde B_H\to \Gr_{m+1-k}(V)$.   We sometimes write $B^{m+1-k}_H$, etc. to specify the dimension.
	
	\begin{Definition}
		We say that $E\in\Gr_{m+1-k}(V)$ is a \emph{regular tangent} to $A$ if $\tilde\tau_H$ is immersive on $\tilde\tau_H^{-1}(E)$.
	\end{Definition}
	Note that if $E\notin B_H$ then it is automatically regular. 
	
	For a subset $A\subset V$ we denote by $\mathbb P(A)$ its image in the projective space $\mathbb P(V)$. The regularity of the tangent is equivalent to the non-vanishing of the Gauss curvature of the corresponding section, as follows. 
	\begin{Lemma}
		Fix $(p,E)\in \widetilde B_H$. Choose any line $N\subset T_pM\setminus T_pH$, and set $F=E\oplus N$. Then $\tilde\tau_H:\widetilde B_H\to \Gr_{m+1-k}(V)$ is an immersion at $(p,E)$ if and only if $\mathbb P(H\cap F)\subset \mathbb P( F)$ has non-degenerate second fundamental form at $p$. 
		
		In particular, all tangents to $A$ are regular if and only if $\mathbb P(H)\subset \mathbb P(V)$ is a strictly convex hypersurface. 
	\end{Lemma}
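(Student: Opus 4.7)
The plan is to identify the fiber of $\tilde\tau_H$ over $E$ near $(p,E)$ with an explicit subset of $\widehat E$ cut out in local coordinates, and then recognize its tangent space at $p$ as the null space of the second fundamental form of $\mathbb P(H\cap F)$ at $[p]$.

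The kernel of $d\tilde\tau_H$ at $(p,E)$ is the tangent space to the fiber $\tilde\tau_H^{-1}(E)=\{p'\in\widehat E\cap H: T_{p'}\widehat E\subset T_{p'}H\}$ at $p$. To compute it, I would choose a basis $\{e_1,\ldots,e_{m-k},f_1,\ldots,f_{k-1},n,p\}$ of $V$ with $T_p\widehat E=\Span(e_i)$, $T_pH=\Span(e_i,f_j)$, $T_pM=\Span(e_i,f_j,n)$, and take $N=\R n$. Parametrizing $M$ near $p$ by $(x,y,z)\mapsto\rho(x,y,z)p+\sum x_ie_i+\sum y_jf_j+zn$ with $\rho(0)=1$ and $d\rho(0)=0$, the hypersurface $H\subset M$ becomes the graph $z=h(x,y)$ with $h$ vanishing to second order at the origin, and $\widehat E=\{y=z=0\}\cap M$ is parametrized by $x$.

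A point $p'=q(x,0,0)$ lies on $H$ iff $h(x,0)=0$. Using the defining $1$-forms of $H$ in $V$, namely $d(w-\rho)$ and $d(z-h)$ with $w$ the coefficient along $p$, one checks that the basis vectors $e_i+\rho_i(x,0,0)\,p$ of $T_{p'}\widehat E$ lie in $T_{p'}H$ iff $\partial_{x_i}h(x,0)=0$ for every $i$. Linearizing both conditions at $x=0$, where $h$ and its first derivatives already vanish, collapses them to $\sum_jh_{ij}v^j=0$ with $h_{ij}=\partial^2h/\partial x_i\partial x_j(0)$, so $\ker d\tilde\tau_H|_{(p,E)}$ coincides with the null space of the Hessian $(h_{ij})$.

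To identify this Hessian with the projective second fundamental form, I work in the affine chart of $\mathbb P(F)$ normalized so that the $p$-coefficient equals $1$. In those coordinates $\mathbb P(H\cap F)$ is the graph $t'=h(x',0)/\rho(x',0,0)=h(x',0)+O(|x'|^3)$ over its tangent hyperplane $\mathbb P(E)$ at $[p]$, and its second fundamental form at $[p]$ is precisely $(h_{ij})$ under the identification $T_{[p]}\mathbb P(E)\cong T_p\widehat E$. This proves the main equivalence. For the corollary, the same form is the restriction of the second fundamental form of $\mathbb P(H)\subset\mathbb P(V)$ at $[p]$ to $T_{[p]}\mathbb P(E)$; as $(p,E)$ and $k$ vary, these subspaces exhaust all proper linear subspaces of $T_{[p]}\mathbb P(H)$, so non-degeneracy on every such subspace is equivalent to the absence of isotropic vectors, i.e., to $\mathbb P(H)$ being strictly convex. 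The main subtlety is the defining-form computation giving $\partial_{x_i}h(x,0)=0$ for the tangency condition, which relies on the codimension-two embedding of $H$ in $V$ rather than any purely intrinsic geometry on $M$.
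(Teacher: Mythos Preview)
Your proof is correct and follows essentially the same approach as the paper: both reduce the question of immersivity of $\tilde\tau_H$ to the nondegeneracy of the Hessian $(h_{ij})$, identified with the second fundamental form of $\mathbb P(H\cap F)$ at $[p]$. The paper only sketches this, noting that $d\tilde\tau_H$ is automatically injective on directions where $p$ moves transversally to $E$ and referring to \cite{teufel} for the remaining computation, whereas you carry out the coordinate calculation explicitly; your parametrization $(x,y,c)$ of $\widetilde B_H\cong\Gr_{m-k}(TH)$ and the resulting identification $\ker d\tilde\tau_H=\{v\in T_p\widehat E:(h_{ij})v=0\}$ make precise what the paper leaves implicit.

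One small remark: your opening sentence ``the kernel of $d\tilde\tau_H$ is the tangent space to the fiber'' is slightly loose, since the fiber $\tilde\tau_H^{-1}(E)$ need not be a manifold at $p$; what you actually compute (correctly) is $T_{(p,E)}\widetilde B_H\cap\ker d\tau_H|_{Z_H}$, and your linearization of the fiber equations yields exactly this space because the first equation $h(x,0)=0$ linearizes trivially.
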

	
	\proof
	Let us sketch the argument, see \cite[Lemma 1(ii)]{teufel} for details.   
		Clearly $d\tilde \tau_H$ is injective on the subspace of directions where $p$ moves transversally to $E$. Namely, fixing any subspace $\overline E\subset T_pH$ such that $E\oplus\overline E=T_pH\oplus \R p$, $d\tilde\tau_H$ is injective on $\{ (v,A)\in T_pH\times T_E\Gr_{m+1-k}(V):  v\in \overline E\}\cap T\widetilde B_H$. That injectivity is retained as the remaining directions are added, corresponds to the non-degeneracy of the Gauss map of the section $H\cap F$.
	\endproof
		
	We now describe the Crofton wave front near regular tangents. For an immersed manifold $i:X\looparrowright Y$ and $y\in i(X)$, we denote 
	\begin{displaymath}
	  N_y^*i(X)=\bigcup_{x\in i^{-1}y}(d_xi(T_xX))^\perp\subset T_y^*Y,\qquad N^*i(X)=\bigcup_{y\in i(X)}N^*_yi(X).
	 \end{displaymath}

	\begin{Proposition}\label{prop:base_of_induction}
		Assume $E_0\in B^{m+1-k}_H$ is a regular tangent. Then $\Cr\WF^k_{E_0}(A)\subset N^*_{E_0}B_H$.
	\end{Proposition}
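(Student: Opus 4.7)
By Proposition \ref{prop:yomdin}, $\widehat{\chi}_A = \chi(A\cap\bullet)$ as an $L^1$-function on $\mathrm{Gr}_{m+1-k}(V)$, which is locally constant off of $B_H$. Thus it suffices to show that near a regular tangent $E_0$, the function $E\mapsto \chi(A\cap E)$ can be written as the sum of a smooth function plus finitely many ``Heaviside-type'' jumps, each across a smooth hypersurface that is a local smooth branch of $B_H$.

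The first step is to identify the finite collection of tangent points. Since $H$ is compact and $E_0$ is a regular tangent, $\tilde{\tau}_H$ is a local immersion at every point of the fiber $\tilde{\tau}_H^{-1}(E_0)$; combined with properness of $\tilde{\tau}_H$ (coming from compactness of $H$) this fiber is finite, say $\{(x_1,E_0),\dots,(x_r,E_0)\}$. Choose pairwise disjoint open neighborhoods $W_i\ni x_i$ in $M$, small enough that there exist smaller neighborhoods $W_i'\Subset W_i$ and a neighborhood $U$ of $E_0$ in $\mathrm{Gr}_{m+1-k}(V)$ such that for every $E\in U$, all tangencies of $E$ to $H$ occurring in $W_i$ lie in $W_i'$, while $E$ meets $H$ transversally on $M\setminus \bigcup_i W_i'$. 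Shrinking $U$ if needed, the image $\tilde{\tau}_H(\tilde{B}_H\cap \pi_H^{-1}(W_i))$ is a smooth hypersurface $B_i\subset U$, with $B_H\cap U=\bigcup_i B_i$.

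The second, main step is the local Morse-theoretic decomposition. Fix smooth defining functions $g_i\in C^\infty(U)$ for $B_i$. Using a partition-of-unity / excision argument based on the additivity of Euler characteristic for a cover whose elements are either tangency-free (hence yielding an $E$-smooth intersection with $A$) or contained in a single $W_i$, one can write, for $E\in U$,
\[
\chi(A\cap E)\;=\;\phi_0(E)\;+\;\sum_{i=1}^r n_i\, \mathbbm 1_{\{g_i>0\}}(E),
\]
with $\phi_0\in C^\infty(U)$ and $n_i\in\mathbb Z$. The decisive local input is that, near $x_i$, the regularity of the tangent (equivalently, non-degeneracy of the second fundamental form of $\mathbb P(H\cap F)$ at $p$ in the notation of the preceding lemma) provides a Morse-type normal form for the family $E\cap H\cap W_i$ as $E$ varies: the topology of $A\cap E\cap W_i$ changes only when $E$ crosses $B_i$, and does so by attaching or removing a standard Morse handle whose contribution to the Euler characteristic is an integer $n_i$ depending only on the Morse index of the crossing.

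The third step is immediate from standard wave front calculus: each $\mathbbm 1_{\{g_i>0\}}$ has wave front set contained in $N^*B_i$, and a smooth summand contributes nothing. Therefore
\[
\mathrm{WF}\bigl(\chi(A\cap\bullet)\bigr)_{E_0}\;\subset\;\bigcup_{i=1}^r N^*_{E_0}B_i\;\subset\; N^*_{E_0}B_H,
\]
which is the desired bound on $\mathrm{Cr}\mathrm{WF}^k_{E_0}(A)$. The main obstacle is the second step, and in particular verifying that the complementary piece $\phi_0$ is genuinely smooth in $E$: this requires the transversality of $E\cap H$ on $M\setminus \bigcup W_i'$ to be uniform on $U$, and a careful Mayer--Vietoris bookkeeping to ensure that the Euler characteristics of the various overlap terms either depend smoothly on $E$ or are locally constant on $U$. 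Once the normal-form / Morse-lemma step is in place, the conclusion follows from elementary microlocal analysis.
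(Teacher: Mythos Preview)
Your plan is correct and follows the same overall strategy as the paper: use regularity and compactness to make $\tilde\tau_H^{-1}(E_0)$ finite, localize $B_H$ near $E_0$ as a finite union of embedded hypersurface branches $B_i$ (the paper calls them $F_j$), show that $\widehat\chi_A$ decomposes as a constant plus integer multiples of Heaviside jumps across the individual $B_i$, and conclude $\WF_{E_0}(\widehat\chi_A)\subset\bigcup_i N^*_{E_0}B_i$. In particular you have correctly identified the crux: a priori, locally constant off $B_H$ only gives containment in the \emph{sum} of the conormal spaces; the branch-by-branch decomposition is what forces the \emph{union}.

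The execution differs. You invoke a Morse-type normal form for the family $E\mapsto E\cap H$ near each tangent point to compute the jump $n_i$. The paper avoids Morse theory: it chooses small balls $K_j\subset M$ around the tangent points with carefully arranged transversality conditions ($\partial K_j\pitchfork H$, $\partial K_j\pitchfork\widehat E$, $(\widehat E\cap H)\pitchfork(\partial K_j\cap H)$, all persisting for $E$ in a neighborhood $W$ of $E_0$), then uses additivity of $\chi$ to split $\chi(E\cap A)$ into $\sum_j\chi(E\cap K_j\cap A)$ plus boundary and complement terms, and verifies directly that each $\chi(E\cap K_j\cap A)$ is locally constant on $W\setminus F_j$ while the remaining terms are constant on all of $W$. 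The cases of $A$ a hypersurface and $A$ a domain are treated separately (in the latter case using the identity $\chi(E\cap A)=\tfrac12\chi(E\cap\partial A)$ when $m-k$ is odd). Your Morse approach is cleaner conceptually and makes the integer $n_i$ geometrically transparent; the paper's approach is more hands-on but needs no normal-form lemma. One small correction: your ``smooth'' $\phi_0$ is automatically integer-valued, so what you actually need---and what the paper proves---is that it is \emph{constant} on $U$, not merely smooth.
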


		That $\Cr\WF^k_{E_0}(A)$ is contained in the {\em sum} of the conormal spaces of the embedded parts of $B_H$ follows from the fact that $\widehat\chi_A$ is locally constant on the complement of $B_H$. However, to show that it is actually contained in the {\em union} of those conormal spaces, in the following proof we will need a more precise description of $\widehat\chi_A$.

	\proof 
	In the following, by a ball (centered at a point) we mean a compact contractible neighborhood (of the point) with smooth boundary.  
	Since $\widetilde B_H$ is a submanifold of $Z_H$, by assumption $B_H\subset\Gr_{m+1-k}(V)$ is an immersed submanifold in a  neighborhood around $E_0$, which is a hypersurface by \eqref{eq_dimension_tildebh}.
	
	The preimage $\tilde \tau_H^{-1}(E_0)$ must be finite, or else we could find a sequence of distinct points $(q_j, E_0)\in \widetilde B_H$, which then has a limit point $(q_0, E_0)$, and $\tilde \tau_H$ would fail to be injective in a neighborhood of $(q_0, E_0)$, contradicting the assumed immersivity of $\tilde \tau_H$ there. Denote $\tilde\tau_H^{-1}(E_0)=\{(q_j, E_0), 1\leq j\leq N\}$.
	
	We can now find a ball $W \subset \Gr_{m+1-k}(V)$ centered at $E_0$, such that $B_H\cap W$ is the finite union of embedded hypersurfaces $F_j$, each diffeomorphic to a Euclidean ball, with $E_0\in F_j$ and $\partial F_j\subset\partial W$ for all $j$. Note that we have no control on how these hypersurfaces intersect each other. Denote by $C_j^\pm$ the connected components of $W\setminus F_j$. The indices are matched by requiring that a neighborhood of $(q_j, E_0)\in\widetilde B_H$ is mapped to $F_j$ by $\tilde\tau_H$.
	
	Fix small balls $K_j\subset M$ around $q_j$ such that 
    \begin{displaymath}
        \tilde\tau_H:\pi_H^{-1}(K_j)\cap \tilde\tau_H^{-1}(W) \to F_j
    \end{displaymath}	
is an embedding, $\partial K_j\pitchfork H$ and $\partial K_j\pitchfork\widehat E_0$ in $M$.
As $Z_0:=\{q_j:1\leq j\leq N\}$ is the subset of all points in $H$ where $\widehat E_0$ fails to intersect $H$ transversally, it holds that $\widehat E_0\pitchfork (H\setminus Z_0)$, and so $\widehat E_0\cap (H\setminus Z_0)$ is a locally closed submanifold in $H$. We assume the $K_j$ small enough so that they are pairwise disjoint, and in particular $\partial K_j\cap Z_0=\emptyset$. We may moreover assume that $H\cap K_j$ is diffeomorphic to a Euclidean ball. By the transversality theorem, we may perturb $K_j$  if necessary to have $(\widehat E_0\cap H) \pitchfork (\partial K_j \cap H)$ in $H$.

	Denote by $\frac12  K_j$ a smaller ball centered at $q_j$. Taking $W$ sufficiently small, we may assume that 
	\begin{equation}\label{eq:transversal1}
		\widehat E \pitchfork \left(H\setminus \cup_j \frac12 K_j\right),\qquad\forall E\in W. 
	\end{equation} 
This follows by the stability of transversal intersections, because $\widehat E$ is a smooth perturbation of $\widehat E_0$, which intersects $H$ transversally in an open neighborhood of $H\setminus \cup_j \textrm{int}(\frac12 K_j)$. Similarly we have 
	\begin{equation} 
		\widehat E \pitchfork \partial K_j\textrm{ in } M,\qquad \forall E\in W, 1\leq j\leq N
	\end{equation} 
		and 
		\begin{equation}\label{eq:transversal2}
			(\widehat E\cap H) \pitchfork (\partial K_j\cap H) \textrm{ in } H,\qquad \forall E\in W, 1\leq j\leq N. 
		\end{equation}
	
	For $\epsilon\in \{\pm\}^N$, denote $C_\epsilon=\cap_{j=1}^N C_j^{\epsilon_j}$. 
	Recall that $\widehat{\chi}_A$ is locally constant on $W_A=B_H^c$, and so is constant on any connected component of a  non-empty set $C_\epsilon$. Let us show there are integers $e_j=e_j(E_0)$ such that for any $\epsilon,\epsilon' \in \{\pm\}^N$ and any $E\in C_{\epsilon}, E'\in C_{\epsilon'}$ one has
	\begin{equation}\label{eq:indicator_difference} 
		\widehat \chi_A(E')-\widehat\chi_A(E)=\sum_{j:\epsilon_j<\epsilon_j'}e_j-\sum_{j:\epsilon_j'<\epsilon_j}e_j.
	\end{equation}

	For $E\in W$, denote $\Sigma_j(E):=\widehat E\cap \partial K_j\cap H$. As it is the transversal intersection of $\widehat E\cap H$ and $\partial K_j \cap H$ in $H$, it is a closed manifold of dimension $(m-k-2)$, and $\chi(\Sigma_j(E))$ is independent of $E\in W$.

    Let us distinguish the two cases under consideration. Assume first $A=H$ is a hypersurface. Since $K_i \cap  K_j = \emptyset$, we have 
    \begin{displaymath}
     \mathbbm 1_M=\sum_{j=1}^N (\mathbbm 1_{K_j}-\mathbbm 1_{\partial K_j})+\mathbbm 1_{\overline {K^c}},
    \end{displaymath}
    with $K:=\bigcup_{j=1}^N K_j$. Hence for $E\in W\setminus B_H$ we have
	\begin{displaymath}
	  \chi(E\cap A) =\sum_{j=1}^N \chi(E\cap K_j\cap A) -\sum_{j=1}^N\chi(\Sigma_j(E))+\chi(E\cap \overline{K^c} \cap A). 	 
	\end{displaymath}

The last summand is constant on $W$ by properties \eqref{eq:transversal1} and \eqref{eq:transversal2}. Consequently, for $E\in C_\epsilon$, $E'\in C_{\epsilon'}$ we have
\begin{displaymath}
  \widehat\chi_A(E')-\widehat \chi_A(E)=\sum_{j=1}^N \left(\chi(E'\cap A\cap K_j)- \chi(E\cap A\cap K_j)\right).
\end{displaymath}

The function $\chi(\bullet\cap A\cap K_j)$ is locally constant on $W\setminus F_j$, and it remains to define 
\begin{equation}\label{eq:local_difference}
 e_j:=\chi(\bullet\cap A\cap K_j)|_{C_j^+}- \chi(\bullet\cap A\cap K_j)|_{C_j^-}.
\end{equation}

    The case of full-dimensional $A$ is only slightly more involved. 
	If $(m-k)$ is odd, we have $\widehat{\chi_A}=\frac12\widehat{\chi_{\partial A}}$, reducing to the previous case. Thus assume $(m-k)$ is even.
	
	Write as before, whenever $E\in W\setminus B_H$,
	\begin{displaymath}
        \chi(E\cap A)=\sum_{j=1}^N \chi(E\cap K_j\cap A) -\sum_{j=1}^N\chi(E\cap \partial K_j\cap A)+\chi(E\cap \overline{K^c}\cap A).
	\end{displaymath}

	Note that for $E\in W\setminus B_H$, all intersections are manifolds with corners.	We have $\chi(E\cap  \partial K_j\cap A)=\frac12\chi(E\cap \partial K_j\cap \partial A)=\frac12\chi(\Sigma_j(E))$, thus it is constant in $W$.

 Set 
	\begin{displaymath}
        S_j:=\widehat E\cap \partial K_j, \quad S:=\widehat E\cap \partial A.
	\end{displaymath}
$S_j$ is a transversal intersection in $M$ for all $E\in W$ and hence a smooth hypersurface, while $S$ is given by a transversal intersection in $M$ and hence smooth for $E\in W\setminus B_H$. Moreover, $S$ is a smooth hypersurface outside of $\frac12 K_j$ for all $E\in W$. 

We claim that the intersection $S_j \cap S=\Sigma_j(E)$ is transversal in $\widehat E$ for all $E\in W$. For if the intersection is not transversal at $x$, then $T_x( \widehat E\cap \partial K_j)=T_x( \widehat E\cap \partial A)$. But by assumption $\widehat E \cap \partial A$ and $\partial K_j \cap \partial A$ intersect transversally in $\partial A$, in particular 
	\begin{displaymath}
	T_x(\widehat E \cap \partial A)+T_x(\partial K_j \cap \partial A)=T_x\partial A.
	\end{displaymath} 
	In conjunction with the previous equality, we get $T_x\partial A \subset T_x\partial K_j$, which is false.

    Let $X, Y\subset P$ be smooth domains in a manifold $P$, and assume $\partial X\pitchfork \partial Y$ and $X$ is compact. Let $Z\subset X\cap Y$ be the closure of a connected component of $X\cap Y$. Then $\chi(Z)$ is constant as $X, Y$ are perturbed while maintaining transversality.
	
	 Taking $P=\widehat E$, $X=E\cap A$ with $\partial X=S$ (which is a manifold for $E \in W \setminus B_H$), and $Y=E \cap K_j$ with $\partial Y=S_j$ we get that $\chi(E\cap A\cap K_j)$ is locally constant in $W\setminus B_H$. Taking $X=E\cap A$ with $\partial X=S$ (which is a manifold outside $\bigcup_j \frac12 K_j$ for all $E \in W$), $Y=E\cap \overline{K^c}$ with $\partial Y=\bigcup_j S_j$, it follows that $\chi(E\cap A\cap \overline{K^c})$ is constant in $W$. Thus we may define $e_j$ as in the previous case by eq. \eqref{eq:local_difference}.
	
	It follows from eq. \eqref{eq:indicator_difference} that for $E\in W$, $\widehat \chi_A(E)$ is a linear combination of the indicator functions of the connected components of the complements of the hypersurfaces $F_j$  in $W$. Therefore $\WF_{E_0}(\widehat\chi_A)\subset \bigcup_j N^*F_j$, concluding the proof.
	
	\endproof

%-----------------------------------------------------------------	
\section{The Crofton wave front of LC-regular hypersurfaces}\label{sec:LC_crofton_WF}
	
	Let $(W,Q)$ be a vector space equipped with a quadratic form. We denote by $\Lambda_{k}^\nu(W) \subset \Gr_{k}(W)$ the collection of subspaces $E\subset W$ where $Q|_E$ has nullity $\nu$. We will need to describe those sets in several cases. 
	
	\begin{Proposition} \label{prop_dimension_lambda}
		Assume $(V,Q)$ has dimension $d$. 
		\begin{enumerate}
		 \item If $Q$ is non-degenerate, then $\Lambda_k^\nu(V)\subset\Gr_k(V)$ is a submanifold of dimension 
		 \begin{equation} \label{eq_dimension_lambda}
		    \dim \Lambda_{k}^\nu(V)=k(d-k)-\binom{\nu+1}{2}.
		  \end{equation}
		Writing $E_0:=E \cap E^Q$, we have
		\begin{equation} \label{eq_tangent_lambda}
		T_E \Lambda_{k}^\nu(V)=\{A \in \Hom(E,V/E): Q(Au,u)=0, \forall u \in E_0\}.
		\end{equation}
		\item If $Q$ has nullity $1$ and $E \in \Lambda_k^\nu(V)$ is such that $\Ker Q \cap E=\{0\}$, then $\Lambda_k^\nu(V)$ is a manifold near $E$ whose dimension is given by Eq. \eqref{eq_dimension_lambda}.
		\end{enumerate}
	\end{Proposition}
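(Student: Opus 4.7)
For part (1), I would start by fixing a $Q$-adapted decomposition of $V$ around the point $E \in \Lambda_k^\nu(V)$. Set $E_0 := E \cap E^Q$, which has dimension $\nu$, and pick a complement $E' \subset E$ so that $E = E_0 \oplus E'$ and $Q|_{E'}$ is non-degenerate. Since $E_0$ is totally isotropic and $Q$ is non-degenerate on $V$, the standard hyperbolic construction produces a totally isotropic subspace $E_0'' \subset V$ of dimension $\nu$ paired non-degenerately with $E_0$ under $Q$. Adjusting $E_0''$ by elements of $(E_0 \oplus E_0'')^{\perp_Q}$ I may further arrange $Q(E_0'', E') = 0$. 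Setting $W := (E_0 \oplus E_0'' \oplus E')^{\perp_Q}$ yields a $Q$-orthogonal decomposition $V = (E_0 \oplus E_0'') \oplus E' \oplus W$ with $Q$ non-degenerate on each summand.

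Next I parametrize a neighborhood of $E$ in $\Gr_k(V)$ by the chart $A \in \Hom(E, F) \mapsto \mathrm{graph}(A)$, with $F := E_0'' \oplus W$. Transferring the basis through $u \mapsto u + Au$, the Gram matrix of $Q$ restricted to $\mathrm{graph}(A)$ is $G_A(u,v) = Q(u,v) + Q(Au,v) + Q(u,Av) + Q(Au,Av)$ for $u,v \in E$. In the block form along $E = E_0 \oplus E'$, the value $G_0$ has vanishing $E_0$-block and invertible $E'$-block, so near $A = 0$ the condition $\mathrm{rank}\, G_A = k - \nu$ is locally equivalent to the vanishing of the Schur complement $\Sigma(A) \in \Sym^2 E_0^*$, giving $\binom{\nu+1}{2}$ scalar equations.

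The crux---and the main technical obstacle---is to verify transversality: the linearization $A \mapsto G_1(A)|_{E_0 \times E_0}$, with $G_1(A)(u,v) = Q(Au,v) + Q(u,Av)$, must surject onto $\Sym^2 E_0^*$. Writing $B \in \Hom(E_0, E_0'')$ for the $E_0''$-component of $A|_{E_0}$ and using the isomorphism $E_0'' \cong E_0^*$ induced by the non-degenerate pairing $Q|_{E_0 \times E_0''}$, this map becomes exactly the symmetrization $\Hom(E_0, E_0^*) \to \Sym^2 E_0^*$, which is manifestly onto. Consequently $\Lambda_k^\nu(V)$ is a submanifold of codimension $\binom{\nu+1}{2}$ near $E$ with dimension as in \eqref{eq_dimension_lambda}, and its tangent space is the kernel of $G_1(\cdot)|_{E_0 \times E_0}$; the condition $Q(Au,v) + Q(u,Av) = 0$ for all $u,v \in E_0$ is by polarization equivalent to $Q(Au,u) = 0$ for all $u \in E_0$, matching \eqref{eq_tangent_lambda}.

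For part (2), I would reduce to part (1) by passing to the quotient $\pi: V \to \bar V := V/\Ker Q$, on which $Q$ descends to a non-degenerate form $\bar Q$. The assumption $\Ker Q \cap E = \{0\}$ makes $\pi|_E$ injective, so $\pi(E) \in \Gr_k(\bar V)$ and $Q|_E = \bar Q|_{\pi(E)}$; in particular $E \in \Lambda_k^\nu(V)$ if and only if $\pi(E) \in \Lambda_k^\nu(\bar V)$. The map $\{E \in \Gr_k(V) : E \cap \Ker Q = \{0\}\} \to \Gr_k(\bar V)$ given by $E \mapsto \pi(E)$ is a submersion onto its image, with fibers isomorphic to $\Hom(\pi(E), \Ker Q)$ of dimension $k$. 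Since by part (1) the subset $\Lambda_k^\nu(\bar V) \subset \Gr_k(\bar V)$ is a submanifold near $\pi(E)$ of dimension $k(d-1-k) - \binom{\nu+1}{2}$, its preimage is a submanifold near $E$ of dimension $k(d-1-k) - \binom{\nu+1}{2} + k = k(d-k) - \binom{\nu+1}{2}$, as asserted.
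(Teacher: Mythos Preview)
Your argument is correct. For part (1) the paper simply cites \cite[Proposition~4.2]{bernig_faifman_opq}, so your self-contained proof via graph charts and the Schur complement is a genuine addition; the surjectivity step is exactly the right place to put the work, and your identification of the linearization with the symmetrization map $\Hom(E_0,E_0^*)\to\Sym^2 E_0^*$ is clean. (Your ``adjust $E_0''$ to be $Q$-orthogonal to $E'$'' is not strictly needed: all you use later is that the chosen complement $F$ to $E$ contains some $E_0''$ paired non-degenerately with $E_0$, and any such $E_0''$ automatically meets $E$ trivially, so it extends to a complement.)

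For part (2) you and the paper take dual routes. You pass to the quotient $\bar V=V/\Ker Q$, where $\bar Q$ is non-degenerate, and realize $\Lambda_k^\nu(V)$ near $E$ as the preimage of $\Lambda_k^\nu(\bar V)$ under the submersion $E\mapsto\pi(E)$; the fiber dimension $k$ then gives the count. The paper instead enlarges to $W=V\oplus\R$ with a non-degenerate extension $\widetilde Q$ and shows that $\Lambda_k^\nu(W)$ and $\Gr_k(V)$ meet transversally in $\Gr_k(W)$ at $E$, using the tangent-space description \eqref{eq_tangent_lambda} from part (1). Both arguments are short; yours is perhaps more elementary (preimage under a submersion rather than a transversality check), while the paper's makes direct use of \eqref{eq_tangent_lambda} and hence ties the two parts of the proposition together more tightly.
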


\proof
\begin{enumerate}
 \item See \cite[Proposition 4.2]{bernig_faifman_opq}. 
 \item Write $L_0:=\Ker(Q)$. Consider $W:=V \oplus \R$, and extend $Q$ as a non-degenerate quadratic form $\widetilde Q$ on $W$. Let us verify that the submanifolds $\Lambda_k^\nu(W)$ and $\Gr_k (V)$ intersect transversally in $\Gr_k(W)$ at $E$. 

 As $L_0 \not \subset E$, also $E^{\widetilde Q} \not \subset L_0^{\widetilde Q}=V$. Thus we can find a line $L \subset E^{\widetilde Q} \setminus V$. Now any linear map $A:E \to W/E$ decomposes as a sum $A=A_1+A_2$ with $A_1 \in \Hom(E,V/E)=T_E \Gr_k(V), A_2 \in \Hom(E,(E+L)/E) \subset \Hom(E,W/E)=T_E \Gr_k(W)$. Since $\widetilde Q(A_2u,v)=0$ for all $u,v \in E_0$, we have  $A_2\in T_E \Lambda^\nu_k(W)$ by \eqref{eq_tangent_lambda}.  
	
	This proves the claim.  As $\Lambda_k^\nu (V)=\Lambda_k^\nu(W) \cap \Gr_k (V)$, it is a manifold near $E$. The formula for the dimension then follows from the previous case.
\end{enumerate}

\begin{Corollary} \label{cor:degenerate_linear_algebra_bundle}
 Let $B$ be a smooth manifold, and $W$ a real vector bundle of rank $d$ over $B$. 
		Let $Q\in\Gamma(B, \Sym^2(W^*))$ be a smooth field of quadratic forms, of nullity at most $1$ for all $x$. Let $\Gr_k(W)$ be the corresponding bundle of $k$-subspaces over $B$, and consider $\Lambda_k^\nu (W)=\{(x,E)\in \Gr_k(W): E\in\Lambda^\nu_k(W_x,Q_x)\}$. If $(p,E)\in \Lambda_k^\nu (W)$ and $\Ker(Q_ x)\cap E=\{0\}$, then $\Lambda_k^\nu (W)$ is a manifold near $(p,E)$, of dimension 
		\begin{displaymath}
		\dim \Lambda_k^\nu (W)=\dim B+k(d-k)-{\nu+1 \choose 2}.
		\end{displaymath}
\end{Corollary}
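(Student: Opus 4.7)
The plan is to reduce to Proposition \ref{prop_dimension_lambda}(2) by extending the bundle $W$ to a larger bundle carrying a nondegenerate form and then taking a transversal intersection with $\Gr_k(W)$.

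First, in a neighborhood $U$ of $p$, I would introduce the auxiliary rank $(d+1)$ bundle $\widetilde W := W|_U \oplus \underline{\R}$ and construct a smooth section $\widetilde Q \in \Gamma(U, \Sym^2(\widetilde W^*))$ extending $Q$ and nondegenerate at every point. At the single point $p$ such a nondegenerate extension exists by the linear-algebra construction used in the proof of Proposition \ref{prop_dimension_lambda}(2); this is where the hypothesis that the nullity of $Q$ is at most $1$ enters. Picking such an extension in a local frame and extending smoothly over the base, nondegeneracy then persists on a neighborhood of $p$ as an open condition.

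Next, since a smooth field of nondegenerate quadratic forms has constant signature on connected components and is therefore trivializable as a quadratic bundle on a small enough $U$, I would identify $\Lambda_k^\nu(\widetilde W)$ locally with $U \times \Lambda_k^\nu(\widetilde V_0, \widetilde Q_0)$ for a standard model $(\widetilde V_0, \widetilde Q_0)$. By Proposition \ref{prop_dimension_lambda}(1) this is a smooth submanifold of $\Gr_k(\widetilde W)$ of dimension $\dim B + k(d+1-k) - \binom{\nu+1}{2}$.

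Finally, the key identity is $\Lambda_k^\nu(W) = \Lambda_k^\nu(\widetilde W) \cap \Gr_k(W)$ inside $\Gr_k(\widetilde W)$, where $\Gr_k(W)$ sits as a smooth subbundle of codimension $k$. I would verify that this intersection is transversal at $(p,E)$: both submanifolds project as submersions onto $B$, so the transversality reduces to the fiberwise transversality of $\Lambda_k^\nu(\widetilde W_p, \widetilde Q_p)$ and $\Gr_k(W_p)$ in $\Gr_k(\widetilde W_p)$ at $E$, which is precisely what is established in the proof of Proposition \ref{prop_dimension_lambda}(2) under the hypothesis $\Ker(Q_p) \cap E = \{0\}$. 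The transversal intersection formula then yields dimension $\dim B + k(d+1-k) - \binom{\nu+1}{2} - k = \dim B + k(d-k) - \binom{\nu+1}{2}$, matching the claim. The main (mild) obstacle is arranging the smooth nondegenerate extension $\widetilde Q$; the remaining steps follow mechanically from the previous proposition.
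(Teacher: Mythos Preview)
Your argument is correct and is, in effect, a careful unpacking of the paper's one-line proof ``Using a local trivialization, this reduces to Proposition~\ref{prop_dimension_lambda}.'' The paper leaves implicit exactly how the reduction absorbs the $B$-dependence of $Q$; your device of extending to a nondegenerate $\widetilde Q$ on $W\oplus\underline{\R}$, trivializing $(\widetilde W,\widetilde Q)$ so that $\Lambda_k^\nu(\widetilde W)$ becomes a product, and then cutting by $\Gr_k(W)$ transversally is precisely the natural way to make this reduction rigorous, and the fiberwise transversality step reuses the argument from the proof of Proposition~\ref{prop_dimension_lambda}(2) verbatim. One small remark: when $Q_p$ is nondegenerate the reference to part~(2) is not quite literal (that part assumes nullity exactly~$1$), but in that case the extension is unnecessary---one trivializes $(W,Q)$ itself and applies part~(1) directly---so there is no gap.
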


\proof
	Using a local trivialization, this reduces to Proposition \ref{prop_dimension_lambda}. 
\endproof
	
	\begin{Lemma}\label{lem:degenerate_linear_algebra} 
		Let $W$ be a $d$-dimensional vector space equipped with a quadratic form $Q$ of nullity $1$ with kernel $L_0$. Assume $L_0\subset E_0\in\Lambda_k^\nu(W)$, and define the set $C \subset T_{E_0} \Gr_k(W)$ of all velocity vectors $E'(0)$ of smooth curves $E(t) \in\Lambda_k^\nu(W)$ with $E(0)=E_0$. Then $C$ is a cone over a closed manifold, and has dimension at most $k(d-k)-{\nu+1\choose 2}$.
	\end{Lemma}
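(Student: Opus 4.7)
The plan is to reduce to the non-degenerate situation by embedding $(W,Q)$ into $(W',\widetilde Q)$, where $W'=W\oplus\mathbb R e_0$ and $\widetilde Q$ is a non-degenerate extension of $Q$ (concretely chosen so that $\widetilde Q(e_0,l_0)=1$, $\widetilde Q(e_0,W_1)=0$, $\widetilde Q(e_0,e_0)=0$ for a complement $W_1$ of $L_0$). Then $\Lambda_k^\nu(W)=\Lambda_k^\nu(W')\cap\Gr_k(W)$, and by Proposition \ref{prop_dimension_lambda}(1) applied to $W'$, the variety $\Lambda_k^\nu(W')$ is a smooth submanifold of $\Gr_k(W')$ at $E_0$ of dimension $k(d+1-k)-\binom{\nu+1}{2}$, with tangent space $\{A\in\Hom(E_0,W'/E_0):\widetilde Q(Au,v)+\widetilde Q(Av,u)=0\ \forall u,v\in K\}$ where $K=\Ker(Q|_{E_0})$, a $\nu$-dimensional subspace containing $L_0$.

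Next I would compute the linear intersection $I:=T_{E_0}\Lambda_k^\nu(W')\cap T_{E_0}\Gr_k(W)\subset\Hom(E_0,W/E_0)$. The projection $T_{E_0}\Gr_k(W')\to T_{E_0}\Gr_k(W')/T_{E_0}\Gr_k(W)\cong E_0^*$ sends $T_{E_0}\Lambda_k^\nu(W')$ into the hyperplane $\{t\in E_0^*:t(l_0)=0\}$, because specializing the defining condition to $u=v=l_0\in K$ forces this vanishing; a dimension count using the surjectivity built into Proposition \ref{prop_dimension_lambda}(1) shows the image equals this hyperplane. Consequently $\dim I=k(d-k)-\binom{\nu+1}{2}+1$, which is exactly one more than the claimed dimension of $C$. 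The inclusion $C\subset I$ is automatic, since a smooth curve in $\Lambda_k^\nu(W)$ is a smooth curve in both $\Lambda_k^\nu(W')$ and $\Gr_k(W)$.

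To cut $I$ down to $C$, I would parameterize curves through $E_0$ as graphs $E(t)=\{u+A(t)u:u\in E_0\}$ with $A(t)=tA_1+t^2A_2+\cdots$ taking values in $\Hom(E_0,F)$ for a fixed complement $F$ of $E_0$, and encode membership in $\Lambda_k^\nu(W)$ via the identical vanishing in $t$ of the Schur complement $S(t)$ of the Gram matrix of $E(t)$ with respect to the $K$-block. The first-order condition $dS|_0(A_1)=0$ is exactly $A_1\in I$; but the $(l_0,l_0)$-entry of $S(t)$ loses its linear part because $Q(l_0,\cdot)=0$, so its leading term is $t^2Q(A_1l_0)$. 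The extra scalar condition $Q(A_1 l_0)=0$ (intrinsically the quadratic form $Q'$ on $W/L_0$ evaluated on the image of $A_1|_{L_0}$) cuts off exactly one dimension, yielding the expected $\dim C=k(d-k)-\binom{\nu+1}{2}$. The cone property is then manifest because all defining conditions are homogeneous in $A_1$.

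The main obstacle is the recursive solvability at higher orders: given $A_1\in I$ with $Q(A_1l_0)=0$, one must produce $A_2,A_3,\ldots$ so that $S(t)\equiv0$. At each order $t^n$ (with $n\geq 3$) the obstruction is a prescribed element of $\Sym^2(K^*)$ depending polynomially on $A_1,\ldots,A_{n-1}$, which must be cancelled by the linear contribution from $A_n$. Verifying that this contribution spans the relevant space reduces, after peeling off the $(l_0,l_0)$-component (already handled at $n=2$ and $n=3$), to the surjectivity of the standard linear map on the non-degenerate quotient $W/L_0$ underlying the tangent space description of Proposition \ref{prop_dimension_lambda}(1). This lets the recursion close and completes the argument.
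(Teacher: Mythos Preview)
Your approach is genuinely different from the paper's. The paper does not embed into a non-degenerate ambient form at all; instead it fixes a hyperplane complement $W_0$ to $L_0$ and studies the map $\tilde I:\Lambda_k^\nu(W)\to\Lambda_{k-1}^{\nu-1}(W_0)$, $E\mapsto E\cap W_0$. Since $Q|_{W_0}$ is non-degenerate, the base is a smooth manifold of the expected dimension by Proposition~\ref{prop_dimension_lambda}(i). The paper then identifies each fibre $\tilde I^{-1}(F)$ explicitly with a neighbourhood of the apex of the suspension of the null-line space $\widetilde\Lambda_1^1(F^Q\cap W_0/F^Q\cap F)$, the apex corresponding to $F\oplus L_0$. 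Thus near $E_0$ the set $\Lambda_k^\nu(W)$ is a smooth bundle of cones over a smooth base, and $C$ is the product of the tangent space to the base with a cone over the null-line space; the dimension formula drops out. What this buys over your method is that actual curves in $\Lambda_k^\nu(W)$ realising any prescribed direction in $C$ are produced for free by the fibration picture.

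This is exactly where your sketch has a genuine gap. Your recursion is more delicate than you indicate: the $(l_0,l_0)$-entry of $S(t)$ at order $t^n$ receives no linear contribution from $A_n$ (since $Q(l_0,\cdot)=0$ kills it), only from $A_{n-1}$ via $2Q(A_1 l_0,A_{n-1}l_0)$. But $A_{n-1}$ is already constrained by the order-$(n-1)$ equations on the remaining entries, and when $A_1 l_0$ is $Q$-null (which is precisely the condition you impose at second order) or zero, the functional $A_{n-1}\mapsto Q(A_1 l_0,A_{n-1}l_0)$ may be degenerate or vanish, so it is not clear the obstruction can always be cancelled. You have not shown the recursion closes in these cases. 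Even granting a formal solution at every order, you then need an actual smooth curve; this requires something like Artin approximation, which you do not invoke. The paper's fibration avoids both issues by constructing the curves geometrically.
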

		
	\proof
	If $Q$ is non-negative or non-positive definite, then $\Lambda_k^\nu(W)$ is empty if $\nu\geq 2$, while $\Lambda_k^1(W)=\{E\in\Gr_k(W):L_0\subset E\}$  is a manifold of dimension $(k-1)(d-k)$, whence the statement is trivial. We henceforth assume that is not the case, that is $Q$ has both positive and negative directions.

	Fix $W_0\subset W$ such that $W=L_0\oplus W_0$. Denote $$\Lambda_k^\nu(W, W_0)=\{E\in \Lambda_k^\nu(W): \Ker(Q|_E)\subset W_0\}.$$ Consider the map $I:\Gr_k(W)\setminus\Gr_k(W_0)\to \Gr_{k-1}(W_0)$ given by $I(E)=E\cap W_0$.
	
	We claim that the restriction 
	\begin{displaymath}
\tilde I:=I:\Lambda_k^\nu(W)\setminus\Lambda_k^\nu(W, W_0)\to \Lambda_{k-1}^{\nu-1}(W_0)	
	\end{displaymath}
	is well-defined. That is, the nullity of $I(E)$ is $(\nu-1)$ when $E\notin \Lambda_k^\nu(W, W_0)$.
	 Indeed, for such $E$ of nullity $\nu$, the nullity of $I(E)$ is clearly at least $(\nu-1)$. Since $\Ker(Q|_E) \not\subset W_0$, one can find $w_E\in \Ker(Q|_E)\setminus W_0$ so that $E=I(E)\oplus \Span(w_E)$. If $I(E)$ contains a $\nu$-dimensional subspace  $U$ that is $Q$-orthogonal to $I(E)$, then $U$ is also $Q$-orthogonal to $E$ as $Q(w_E, I(E))=0$. Hence $U \oplus \Span(w_E) \subset \Ker(Q|_E)$, and consequently the nullity of $E$ is at least $(\nu+1)$, a contradiction.
	
	We will describe the fiber $\tilde I^{-1}(F)$ of $F\in\Lambda_{k-1}^{\nu-1}(W_0)$. Denote 
	\begin{displaymath}
	 \Lambda_k^\nu(W,L_0)=\{E\in \Lambda_k^\nu(W): L_0\subset E\},
	\end{displaymath}
and $\pi_0:W \twoheadrightarrow W_0$ is the projection along $L_0$. Clearly $E\in \tilde I^{-1}(F)\cap \Lambda^\nu_k(W, L_0)$ if and only if $E=L_0 \oplus F$. 	
	
	Since $Q|_{W_0}$ is non-degenerate, $F_K:=F^Q\cap F\subset W_0$ is the kernel of $Q$ on $F^Q\cap W_0$. Thus the quotient space $$V_F:=F^Q\cap W_0/F_K$$ inherits a non-degenerate quadratic form, also denoted $Q$. Let $\pi_K:F^Q\cap W_0 \twoheadrightarrow V_F$ be the projection, and observe that $Q(\pi_Kx)=Q(x)$, so that 
	\begin{displaymath}
	 \pi_K(\Lambda^1_1(F^Q\cap W_0)\setminus \mathbb P(F_K))=\Lambda^1_1(V_F).
	\end{displaymath}

	Denote by $\tilde \pi_0:L_0\oplus V_F \twoheadrightarrow V_F$ the projection to the second summand, and observe that $L_0\oplus V_F$ is naturally equipped with a quadratic form $Q$ with nullity $1$ and kernel $L_0$.

	The manifold $\Lambda^1_1(V_F)$ embeds naturally into $\Lambda^1_1(L_0\oplus V_F)$ as the image of all lines of the form $0\oplus L$,  $L\in \Lambda^1_1(V_F)$. Put 
	\begin{displaymath}
		U_0(F)=\Lambda^1_1(L_0\oplus V_F)\setminus\Lambda^1_1(V_F),
	\end{displaymath} 
	which is a neighborhood of $L_0$. 
	
	 Define a smooth map
	\begin{displaymath}
	\Phi_F:\mathbb P(L_0\oplus V_F)\setminus \mathbb P(V_F) \to\Gr_k(W)
		\end{displaymath}
 as follows. For $N \in \mathbb P(L_0\oplus V_F)\setminus \mathbb P(V_F)$, choose any $0 \neq w \in N \subset L_0 \oplus V_F$. Let $\tilde w \in L_0 \oplus (F^Q \cap W_0)$ be a lift of $w$, and set $\tilde N:=\Span(\tilde w), \Phi_F(N):=\tilde N+F$. If $\tilde w'$ is another lift, then $\tilde w'-\tilde w \in F_K \subset F$, and hence $\Phi_F(N)$ is well-defined. In particular, $\Phi_F(L_0)=L_0+F$.

 \textit{Claim.} $\Phi_F(U_0(F))=\tilde I ^{-1}(F)$, and the restriction $\Phi_F:U_0(F)\to \tilde I^{-1}(F)$ is bijective.
	
	\textit{Proof.}	For the first statement, we consider two cases. If $N=L_0$ then $L_0\oplus F\in \Lambda^\nu_k(W)$, and $I(L_0\oplus F)=F$. If $N\neq L_0, N\notin \mathbb P(V_F)$ and $E=\widetilde N+F$, then one easily verifies that $Q|_N=0$ implies $Q|_{\widetilde N}=0$ and consequently $\Ker(Q|_E)=\Ker(Q|_F)\oplus \widetilde N$, so that again $E\in\Lambda^\nu_k(W)$, and clearly $I(E)=F$.
	
	For injectivity, first note that $\Phi_F(N)=L_0 \oplus F \iff N=L_0$. All other points $N\in U_0(F)$ lie inside a unique projective line $\mathbb P(L_0\oplus L)$ with $L\in \Lambda^1_1(V_F)$, and $N\neq L, L_0$. Put $E=\Phi_F(N)\in\tilde I^{-1}(F)\setminus \Lambda^\nu_k(W, L_0)$.
	
	Note that if $\tilde L, \tilde L'\in\Lambda_1^1(F^Q\cap W_0)$ are two lines such that $\pi_0(E)=\tilde L+F=\tilde L'+F$, the projections $\pi_K\tilde L,\pi_K\tilde L'\in\Lambda_1^1(V_F)$ must coincide: choosing $\tilde v'\in \tilde L'$ we can find $\tilde v \in \tilde L$ such that $\tilde v'=\tilde v+f$ for some $f\in F$. But $\tilde v,\tilde v'\in F^Q$, and so $\tilde v-\tilde v'\in F^Q\cap F=F_K$. 
	
	Since $\pi_0(E)=\pi_0(\widetilde N)+F$, it follows that $L_E:= \pi_K\pi_0(\widetilde N)\in\Lambda^1_1(V_F)$ is uniquely defined by $E$, and it holds that $N\in\mathbb P(L_0\oplus L_E)$. It remains to observe that $\Phi_F$ is obviously injective when restricted to $\mathbb P(L_0\oplus L_E)$.
		
	We verify that $\Phi_F$ is onto in the non-trivial case $E\in\tilde I^{-1}(F)\setminus \Lambda_1^1(W, L_0)$. Choose $w_E\in \Ker(Q|_E)\setminus W_0$, and decompose $w_E=w_1+w_0, w_1 \in W_0, w_0 \in L_0$. In particular, $Q(w_1, w_1)=Q(w_E, w_E)-2Q(w_E, w_0)+Q(w_0, w_0)=0$.  As $L_0\not\subset E$, we have $w_1 \neq 0$ and $w_1 = w_E-w_0 \in E^Q\subset F^Q$.  Since $w_0=w_E- w_1\notin E$ while $F=E\cap W_0\subset E$, it follows that $w_1\in \pi_0(E)\setminus F$ and so we can write $$\pi_0(E)=F+\tilde L, \qquad \tilde L=\Span(w_1)\in\Lambda_1^1(F^Q\cap W_0).$$
	Put $L=\pi_K\tilde L\in \Lambda^1_1(V_F)$. Now take $N\in \mathbb P(L_0\oplus L)$ to be the projection of the line $\Span(w_E)\in \mathbb P(L_0\oplus \tilde L\oplus F_K)$. Note also that $\mathbb P(L_0\oplus L)\subset \Lambda^1_1(L_0\oplus V_F)$. Clearly $\Phi_F(N)=E$, concluding the proof of the claim.

	Denote by $\Psi_F:\tilde I^{-1}(F)\to U_0(F)$ the inverse map of $\Phi_F$. We may fix an auxiliary Euclidean structure on $W$, and choose $w_E\in \Ker(Q|_E)$ as the unit vector forming the least angle with $L$. Now the map $E\mapsto\Ker(Q|_E)$, $\Lambda^\nu_k(W)\to\Gr_\nu(W)$ is smooth, in the sense that it restricts to a smooth map on every smooth curve in $\Lambda^\nu_k(W)\subset\Gr_\nu(W)$. This is because $\Ker(Q|_E)$ is the eigenspace of $0$, which has fixed dimension along the smooth curve. It follows that $E_t\in\Lambda^\nu_k(W)$ is a smooth curve in $\Gr_k(W)$ through $F\oplus L_0$, if and only if $\Psi_F(E_t)\in \Lambda^1_1(L_0\oplus V_F)$ is a smooth curve in $\mathbb P(L_0\oplus V_F)$ through $L_0$.
	
	Define the cone $C_0\subset T_{L_0}\mathbb P(L_0\oplus V_F)$ of tangent vectors to all curves $L_t$ through $L_0$ belonging to $\Lambda_1^1(L_0\oplus V_F)$, as well as the cone $C_F\subset T_{L_0 \oplus F}\Gr_k(W)$ that consists of all tangent vectors to smooth curves through $L_0 \oplus F$ inside $\Lambda^\nu_k(W)$.
	
	We conclude that the differential $D_{L_0}\Phi_F:T_{L_0}\mathbb P(L_0 \oplus V_F)\to T_{L_0 \oplus F}\Gr_k(W)$ restricts to a bijective map $A_F:C_0\to C_F$.

Considering all subspaces $F\in\Lambda_{k-1}^{\nu-1}(W_0)$ simultaneously, we have the fibration  
\begin{displaymath}
    \xymatrix{ \tilde I^{-1}(F)\ar@{^{(}->}[r]& \Lambda_k^\nu(W)\setminus\Lambda_k^\nu(W,W_0)\ar@{->>}[d]^{I}\\
		& \Lambda_{k-1}^{\nu-1}(W_0)}
\end{displaymath}
    The image of the section $F\mapsto L_0 \oplus F$  coincides with $\Lambda_k^\nu(W,L_0)$. 
    
	Therefore, the cone $C\subset T_{E_0}\Gr_k(W)$ has a linear factor that can be identified with $T_{E_0\cap W_0}\Lambda_{k-1}^{\nu-1}(W_0)$. Putting $F=E_0\cap W_0$, the cone $C/T_{F}\Lambda_{k-1}^{\nu-1}(W_0)$ is then identified with $C_F$.
	
	The dimension of $C_0$ can be readily computed. It can be identified with the abstract cone with base $ \Lambda_{1, +}^1(V_F)$, the manifold of oriented null lines in $V_F$. Since the form $Q$ on $V_F$ is non-degenerate and indefinite, we have $\dim  \Lambda_{1,+}^1(V_F)=\dim V_0-2=d-k-(\nu-1)-2$. Hence 
 \begin{displaymath}
 \dim C_0=\dim \Lambda_{1,+}^1(V_F)+1=d-k-\nu.
 \end{displaymath}
We have $\dim \Lambda_{k-1}^{\nu-1}(W_0)=(k-1)(d-k)-\binom{\nu}{2}$ by Proposition \ref{prop_dimension_lambda}, and so
 $$\dim C=\dim C_F+\dim\Lambda_{k-1}^{\nu-1}(W_0)= \dim C_0+ \dim\Lambda_{k-1}^{\nu-1}(W_0)=k(d-k)-\binom{\nu+1}{2}.$$
	\endproof
	
	We now turn to LC-regular submanifolds. First, we will need a simple fact on LC-regular metrics.
	
	\begin{Lemma}\label{lem:LC_gram_determinant}
		Let $(M,g)$ be LC-regular, and assume $g$ is degenerate on $T_pM$. Let $v_1(x),\dots,v_m(x)$ be any local frame near $p$, with Gram matrix $A(x)=(g(v_i, v_j))_{i,j=1}^m\in\Sym_m(\R)$. Then the condition $d_p(\det A)\neq 0$ is independent of the choice of the frame $(v_j)$. Moreover, if the nullity of $g_p$ is $\nu=1$, then $d_p(\det A)\neq 0$, and the degenerate subset of the metric near $p$ is a smooth hypersurface.
	\end{Lemma}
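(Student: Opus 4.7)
The plan is to reduce the statement to a direct computation involving the cofactor matrix of the Gram matrix $A(p)$ at the degenerate point, combined with a chain-rule translation of the LC-regularity hypothesis.

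For independence of the frame, I would observe that any two local frames $(v_j), (v_j')$ near $p$ are related by $v_j'=\sum_i c_{ij}(x) v_i$ with $C(x)\in \GL_m(\R)$ smooth, so their Gram matrices satisfy $A'=C^T A C$ and hence $\det A'=(\det C)^2\det A$. Since $\det A(p)=0$ (as $g_p$ is degenerate), the Leibniz rule gives $d_p(\det A')=(\det C(p))^2\,d_p(\det A)$, which is a nonzero multiple of $d_p(\det A)$. Thus the non-vanishing of $d_p(\det A)$ is a frame-independent condition.

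Now assume the nullity of $g_p$ equals $1$, and fix a frame with $v_1(p)=v_0$ spanning $\ker g_p$. Then $A(p)$ is a symmetric matrix of rank $m-1$ whose kernel is $\R e_1$, so its adjugate is of the form $\mathrm{adj}(A(p))=c\,e_1 e_1^T$, where $c$ is the $(1,1)$-minor of $A(p)$, i.e.\ the determinant of the Gram matrix of $v_2(p),\dots,v_m(p)$ modulo $\ker g_p$; non-degeneracy of $g_p$ on $T_pM/\ker g_p$ forces $c\neq 0$. Expanding $d(\det A)=\sum_{i,j}(\mathrm{adj}\,A)_{ji}\,dA_{ij}$ at $p$ collapses to
\begin{displaymath}
d_p(\det A)=c\,d_p(A_{11}).
\end{displaymath}
So the task reduces to showing that $d_p(A_{11})\neq 0$, where $A_{11}(x)=g(v_1(x),v_1(x))$.

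To translate LC-regularity into this, I would consider the smooth function $F(x,a):=g_x\bigl(\sum_j a_j v_j(x),\sum_j a_j v_j(x)\bigr)=a^T A(x) a$ on a neighborhood in $M\times \R^m$. This is, up to the trivialization provided by the frame, precisely the pullback of $g:TM\setminus\underline{0}\to\R$. By LC-regularity, $dF$ does not vanish at $(p,e_1)$. The $a$-derivative at $(p,e_1)$ is $2 A(p)e_1=0$, so the non-vanishing must come from the base direction, giving $d_p(A_{11})=e_1^T d_pA\,e_1\neq 0$. Combined with the cofactor computation, this yields $d_p(\det A)\neq 0$, and the regular value theorem applied to $\det A$ realizes the degenerate locus as a smooth hypersurface near $p$.

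The main (minor) subtlety will be keeping track of the two roles the frame plays: trivializing $TM$ to apply LC-regularity via the function $F(x,a)$, and producing the concrete matrix $A(x)$ whose determinant cuts out the degenerate set. Once the cofactor formula pins down $d_p(\det A)$ as a nonzero multiple of the $(e_1,e_1)$-component of $d_pA$, the LC-regularity condition plugs in directly.
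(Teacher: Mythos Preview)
Your proposal is correct and follows essentially the same route as the paper: both arguments first check frame-independence via $\det A'=(\det C)^2\det A$ and the vanishing of $\det A(p)$, then reduce $d_p(\det A)$ to a nonzero multiple of the single diagonal entry $d_p(g(v_0,v_0))$ by a cofactor/adjugate computation, and finally use LC-regularity (with the vanishing of the fibre derivative at the null vector) to force that diagonal entry to have nonzero differential. The only cosmetic differences are that the paper places the kernel at index $m$ and writes out the cofactor expansion directly rather than invoking the adjugate identity.
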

	\proof
	Let $\tilde v_1(x),\dots,\tilde v_m(x)$ be a different local frame with corresponding Gram matrix $\widetilde A(x)$. 
	Then the change of basis matrix $U(x)\in \GL(m)$ satisfies $\widetilde A(x)=U(x)^TA(x) U(x)$. By assumption, $\det A(p)=\det \widetilde A(p)=0$. Thus $d_p(\det \widetilde A)=\det U(p)^2d_p(\det A)$, which implies the first statement.
	
	For the second statement, choose coordinates $x_1,\dots,x_m$ on $M$ near $p$, and take $v_j=\frac{\partial}{\partial x_j}$. We may assume that $\Ker(g_p)=\Ker A(p)=\Span(v_m)$, and by assumption $A(p)$ has non degenerate principal $(m-1)$-minor. By LC-regularity, we can choose a curve $p(t)\in M$ with $p(0)=p$ and a smooth vector field $v(t)$ along it with $v(0)=v_m$ such that $\left.\frac{d}{dt}\right|_{t=0}g(v(t),v(t))=\left.\frac{d}{dt}\right|_{t=0}\langle A(p(t))e_m, e_m\rangle\neq 0$. It follows that $$\left.\frac{d}{dt}\right|_{t=0}\det A(p(t))=\left.\frac{d}{dt}\right|_{t=0}\langle A(p(t))e_{m},e_{m}\rangle\cdot \det (A(p))_{i,j=1}^{m-1}\neq 0.$$  
	As the degenerate subset of $g$ near $p$ is $\{x:\det A(x)=0\}$, the last assertion follows.
	\endproof
	
	The following is the main result of the section. We use the notation and terminology of Sections \ref{sec:crofton_functorial} and \ref{sec:crofton_functorial2}.
	\begin{Proposition}\label{prop:LC_regularity_implies_WF}
		Let $(V,Q)$ be a pseudo-Euclidean vector space of dimension $(n+1)$, and $M=Q^{-1}(r)$ with $r \in \{\pm1\}$ a pseudo-Riemannian space form. Let $H \subset M$ be an LC-regular hypersurface, and $E \in \Gr_{n-k+1}(V)$. Assume $E \in B_H$ is a regular tangent to $H$ of nullity $\nu$. Then each embedded part of $B_H$ through $E$ intersects $\Lambda^\nu_{n-k+1}(V)$ transversally at $E$. 
	\end{Proposition}
	
	\proof
	Denote the signature of $M$ by $(p_M,q_M)$. Write $g=Q|_H$, $\widehat E=E\cap M$. Since $H$ is a hypersurface, the nullity of $g_x$ is at most one for all $x\in H$. Define $\widetilde\Lambda^\nu_{n+1-k}:=\tau_M^{-1}\Lambda^\nu_{n+1-k}(V)$, which is a submanifold of $Z_M$ as $\tau_M:Z_M\to\Gr_{n+1-k}(V)$ is a submersion. The relevant maps are given by the following diagram. 
	
	\begin{center}
	\begin{tikzpicture}
		\matrix (m) [matrix of math nodes,row sep=3em,column sep=4em,minimum width=2em]
	   {
	   	  &  Z_M &  \widetilde \Lambda_{n+1-k}^\nu(V) \\
   	     & Z_H  & \widetilde B_H  \\
   	     \Lambda_{n+1-k}^\nu(V)  & \Gr_{n+1-k}(V) & B_H=\tau_H(\widetilde B_H)\\  };
        \path[left hook-stealth]
        (m-1-3) edge node [right] {}  (m-1-2)
        (m-2-3) edge node [right] {}  (m-2-2)
        (m-3-3) edge node [right] {}  (m-3-2);
        \path[right hook-stealth]
         (m-2-2) edge node [right] {} (m-1-2)
        (m-3-1) edge node [right] {} (m-3-2); 
        \path[draw,->>]
         (m-2-2) edge node [right] {$\tau_H$} (m-3-2)
        (m-1-2) edge[bend right] node [left] {$\tau_M$} (m-3-2)
        (m-2-3) edge node [right] {$\tilde \tau_H$} (m-3-3);
       	\end{tikzpicture}
\end{center}

	As $B_H$ is a hypersurface, one should show for every embedded part $F$ of $B_H$ through $E$ that $T_E\Lambda^\nu_{n+1-k}(V) \not \subset T_E F$. Assuming the contrary, there is $p\in H\cap E$ such that $T_E \Lambda^\nu_{n+1-k}(V) \subset d\tau_H(T_{p,E}\widetilde B_H)$. Observe that $\Ker(Q|_E)\subset T_p\widehat E$.
	
	Before proceeding with the more complicated general case, we consider the case $\nu=1$. Since $\dim \Lambda^1_{n+1-k}(V) =\dim B_H=k(n+1-k)-1$ by Proposition \ref{prop_dimension_lambda}, we have $T_E \Lambda^1_{n+1-k}(V) =d\tau_H(T_{p,E}\widetilde B_H)$. Let $v_0 \in E \cap T_pH$ be in the kernel of ${g|_{T_p\widehat E}}$. For any smooth curve $v_t\in TH$ through $v_0$, we may find a smooth curve $E_t\subset B_H$ through $E$ such that $v_t \in E_t$. Then $A:=\left.\frac{d}{dt}\right|_{t=0} E_t\in\Hom(E, V/E)$ satisfies $Q(Av_0, v_0)=0$, and by \eqref{eq_tangent_lambda} we have $A\in T_E \Lambda^1_{n+1-k}(V)$.

	It follows that $\left.\frac{d}{dt}\right|_{t=0} g(v_t)=2Q(v_0, Av_0)=0$, contradicting the LC-regularity of $H$.

	Let us now consider the general case. Fix an auxiliary Riemannian metric $h$ on $M$, and let $\rho_H$ be the least distance projection to $H$, defined and smooth in a neighborhood of $p$.
		
	By our assumption $\tilde \tau_H$ is an immersion at $(p,E)$.  Using Proposition \ref{prop_dimension_lambda} we see that
	\begin{equation}\label{eq:dimensions}
	\dim (d\tilde\tau_H)^{-1}T_{E}\Lambda^\nu_{n+1-k}(V)=\dim \Lambda^\nu_{n+1-k}(V)=k(n-k+1)-{\nu+1\choose 2}.
	\end{equation}

		\textit{Claim 1.} 
           Let $X,Y$ be manifolds, let $F:X \to Y$ be a submersion at $p\in Z$. Let $Z \subset X$ and $W \subset Y$ be embedded submanifolds, and $p \in Z$. Denote $f:=F|_Z$. Then 
           \begin{displaymath}
           	(d_pf)^{-1} T_{f(p)}W=T_pZ \cap T_p(F^{-1}W).
           \end{displaymath}
	
As $F^{-1}W$ is a submanifold near $p$ and $T_p(F^{-1}W)=(d_pF)^{-1}T_{F(p)}W$, the statement is clear. Applying the claim to $X=Z_M, Y=\Gr_{n+1-k}(V), Z=\tilde B_H, W=\Lambda_{n+1-k}^\nu(V)$ and $F=\tau_M$ yields

 \begin{displaymath}
	(d_{(p,E)}\tilde \tau_H)^{-1} T_E \Lambda_{n+1-k}^\nu(V)=T_{(p,E)}\tilde B_H \cap T_{(p,E)} \widetilde\Lambda_{n+1-k}^\nu(V).
\end{displaymath}

	\textbf{Case 1:} $\Ker(g_p)\cap E=\{0\}$.  
	
	Define 
\begin{displaymath}	
	\widetilde B^{\nu}_H:=\tilde \tau_H^{-1}\Lambda^\nu_{n-k+1}(V)=\{(q,F)\in \widetilde B_H:T_q\widehat F\in\Lambda^\nu_{n-k}(T_qH)\}.
\end{displaymath}

	By Corollary \ref{cor:degenerate_linear_algebra_bundle} we see that $\widetilde B^\nu_H$ is a smooth manifold near $(p,E)$, of dimension 
	\begin{align} \label{eq:smaller_dimension}
        \dim T_{p,E}\widetilde B_H^\nu&= \dim H + (n-k)(n-1-(n-k))-{\nu+1\choose 2}\\
        &=k(n-k+1)-1 -{\nu+1\choose 2}. \nonumber 
    \end{align}

	\textit{Claim 2.} $(d_{p,E}\tilde\tau_H)^{-1}T_{E}\Lambda^\nu_{n+1-k}(V)\subset T_{p,E}\widetilde B_H^\nu$.
	
\noindent	We postpone the proof. Combined with eqs. \eqref{eq:dimensions} and \eqref{eq:smaller_dimension} we get a contradiction. \\

	\textbf{Case 2:}  $T_pH$ has nullity one, and $\Ker(g_p)\subset E$. 
	
	Let $S\subset H$ be the degenerate subset of $g$. It follows from Lemma \ref{lem:LC_gram_determinant} that $S$ is a smooth hypersurface. Define $\widetilde B^\nu_H(S)=\{(q,F)\in \widetilde B_H: q\in S,F\in \Lambda^\nu_{n-k+1}(V)\}$. It is a fiber bundle over $S$ with fiber $\Lambda^\nu_{n-k}(\R^{p_M-1,q_M-1,1})$.
	
	\textit{Claim 3.} For any $(w,\xi)\in d_{p,E}\tilde\tau_H^{-1}T_E\Lambda^\nu_{n-k+1}(V)$ with $w\in T_pS$ there is a curve $(q(t), F(t))\in \widetilde B^\nu_H(S)$ with $(q'(0), F'(0))=(w, \xi)$. 
	
\noindent	Again we postpone the proof of the claim. The set of all vectors $(q'(0), F'(0))$ as in the claim defines, by Lemma \ref{lem:degenerate_linear_algebra}, a cone in $T_{p,E}\widetilde B_H$ of dimension 
	\begin{align*}
        N&=\dim S+ (n-k)((n-2)-(n-k-1))-{\nu +1\choose 2}  \\&= (n-k)(k-1)-{\nu+1\choose 2}+n-2<\dim d_{p,E}\tilde\tau_H^{-1}T_E\Lambda^\nu_{n-k+1}(V).
    \end{align*}
	It follows by the claim that we can find a curve $(p(t), E(t))\in \widetilde B_H$ through $(p,E)$ with $E'(0)\in T_{E}\Lambda_{n-k+1}^\nu(V)$ and $p'(0)\notin T_pS$.
	
	Let $v_{n-1}\in T_pH$ span $\Ker(g_p)$, and recall that $v_{n-1}\in T_p\widehat E$, in particular $v_{n-1}\in\Ker(Q|_{E})$. Choosing any smooth vector field $v_{n-1}(t)\in T_{p(t)}\widehat E(t)$, we find $\left.\frac{d}{dt}\right|_{t=0}Q(v_{n-1}(t))=2Q(v_{n-1}'(0), v_{n-1}(0))=0$.
	
	Choose a frame $(v_j(t))_{j=1}^{n-1}$ for $H$ along $p(t)$ with $v_{n-1}(0)=v_{n-1}$, such that $T_{p(t)}\widehat E(t)=\Span(v_k(t),\dots,v_{n-1}(t))$. Then 
	\begin{displaymath}
        \left.\frac{d}{dt}\right|_{t=0}\!\!\det (Q(v_i(t),v_j(t)))_{i,j=1}^{n-1}=\det (Q(v_i(t),v_j(t)))_{i,j=1}^{n-2}\left.\frac{d}{dt}\right|_{t=0}\!\!Q(v_{n-1}(t))=0,
    \end{displaymath} 
    which means that $d_p(\det g)(p'(0))=0$ by Lemma \ref{lem:LC_gram_determinant}. Since $\Ker d_p(\det g)=T_pS$ and $p'(0) \notin T_pS$, we get a contradiction. This completes the proof of the proposition, modulo the two claims we now proceed to prove.\endproof

\proof[Proof of Claim 2] 
	Consider a curve $(p(t), E(t))\in \widetilde \Lambda^\nu_{n-k+1}$ with $(p'(0), E'(0))\in T_{p, E}\widetilde B_H$.
	We ought to find a curve $(q(t), F(t))\in \widetilde B^\nu_H$ through $(p,E)$ with $(q'(0), F'(0))=(p'(0), E'(0))$. 
	
	Set $q(t)=\rho_H(p(t))$, evidently $q'(0)=p'(0)$. Fix a subspace $W_0\subset T_pH$ which is non-degenerate and contains $T_p \widehat E$. If $T_pH$ is non-degenerate, we can just take $W_0=T_pH$. Otherwise, $\dim \ker g_p=1$ and we may take any hyperplane $W_0 \subset T_pH$ which contains $T_p\widehat E$ and satisfies $W_0 \cap \ker g_p=\{0\}$. Now fix any linear map $A:W_0\to V/W_0$ which makes the following diagram commutative.
	
	\begin{displaymath}\xymatrixcolsep{5pc}
	\xymatrix{E \ar[r]^{E'(0)} \ar[d]^{i} & V/E \ar[d]_{\pi}\\
		W_0\ar[r]^{A}\ar[d]^{i} & V/W_0\ar[d]_{\pi}\\
		T_pH\oplus \R p \ar[r]^-{\left.\frac{d}{dt}\right|_{ t=0}(T_{q(t)}H\oplus \R q(t))}  & 	V/(T_pH\oplus \R p)
	}
	\end{displaymath}
	and use Lemma \ref{lem:pair_of_spaces} to find smooth paths $W(t) \supset E(t)$, $\widetilde W(t)\subset T_{q(t)}H\oplus \R q(t)$ with $W(0)=\widetilde W(0)=W_0$ and $W'(0)=\widetilde W'(0)=A$. For small $t$, $W(t), \widetilde W(t)$ are non-degenerate of fixed signature $(\alpha, \beta)$. 
	
	Consider the manifold $Z=\{(x, W)\in M\times \Gr_{\alpha+\beta}(V), x\in W, \mathrm{sign}(Q|_W)=(\alpha, \beta)\}$.
	Clearly $Z$ is a homogeneous space for $\OO(V, Q)$, with the equivariant projection $\pi_Z:\OO(V,Q)\to Z$ normalized by $\pi_Z(\id)=(p,W_0)$. We can fix a smooth section $X_Z:Z\to \OO(V,Q)$ near $(p, W_0)$ with $X_Z(p, W_0)=\id$ such that $\pi_Z\circ X_Z=\id$. Now define the smooth path $R_t\in \OO(V,Q)$ by 
	\begin{displaymath}
	R_t=X_Z(q(t),\widetilde W(t))\circ X_{ Z}(p(t), W(t))^{-1}.
	\end{displaymath} 
	Then $R_tp(t)=q(t)$, and $\left.\frac{d}{dt}\right|_{t=0}R_t=0$ since $\left.\frac{d}{dt}\right|_{t=0}W(t)=\left.\frac{d}{dt}\right|_{t=0}\widetilde W(t)$.
	
	Setting $F(t)=R_tE(t)$, we have $(q'(0), F'(0))=(p'(0), E'(0))$, and $(q(t), F(t)) \in\widetilde B^\nu_H$. This proves the claim. 
\endproof
	
\proof[Proof of Claim 3]
Consider a curve $(p(t), E(t))\in \widetilde\Lambda^\nu_{n-k+1}(V)$ through $(p, E)$, with $p'(0)=w\in T_pS$ and $(p'(0), E'(0))=(w,\xi)\in T_{p,E}\widetilde B_H$. Let $\rho_S:M\to S$ be the least distance projection with respect to $h$, well-defined and smooth in some neighborhood of $p$. Set $q(t)=\rho_S( p(t))$, clearly $q'(0)=p'(0)$. Denote $L_0=\Ker(g_p)\subset T_pH\cap E$, and extend to a smooth path of lines $L_t\subset E(t)\cap E(t)^{Q}\in\Lambda_\nu^\nu(T_{p(t)}M)$. Consider the manifold of pairs 
	\begin{displaymath}
	Z=\{(x, L): x\in M,L\in\Lambda^1_1(T_xM)\}. 
	\end{displaymath}
Clearly $Z$ is a homogeneous space for $\OO(V, Q)$, with the equivariant projection $\pi_Z:\OO(V,Q)\to Z$ normalized by $\pi_Z(\id)=(p,L_0)$. We can fix a smooth section $X_Z:Z\to \OO(V,Q)$ near $(p, L_0)$ with $X_Z(p, L_0)=\id$ such that $\pi_Z\circ X_Z=\id$. Now define the smooth path $R_t\in \OO(V,Q)$ by 
\begin{displaymath}
R_t=X_Z(q(t),\Ker(g_{q(t)}))\circ X_Z(p(t),L_t)^{-1}.
\end{displaymath}
Then $R_tp(t)=q(t)$, and $\left.\frac{d}{dt}\right|_{t=0}R_t=0$, provided that $\left.\frac{d}{dt}\right|_{t=0}L_t=\left.\frac{d}{dt}\right|_{t=0}\Ker(g_{q(t)})$.  
	
	Let us verify that $L_t$ can be chosen in this fashion. 
	In the following, we fix some Riemannian metric on various manifolds, and write $|x-y|_X$ for the corresponding distance between $x,y\in X$. We will also write, for two subspaces $E, F\subset V$, $\measuredangle(E,F)$ for the angle between them with respect to some Euclidean metric. This should not create ambiguity, as we will be concerned only with rough small scale asymptotics.
	
	As $(p'(0), E'(0))\in T_{p,E}\widetilde B_H$, we may find a curve $(\tilde p(t), \widetilde E(t))\in \widetilde B_H$ through $(p,E)$ with $(p'(0), E'(0))= (\tilde p'(0), \widetilde E'(0))$. Define $\widetilde H(t):=T_{\tilde p(t)}H\oplus \R \tilde p(t)$. It follows that  $\measuredangle(E(t),\widetilde H(t))=O(t^2)$, and by Lemma \ref{lem:pair_of_spaces} we have the commutative diagram 	
	\begin{displaymath}\xymatrixcolsep{3pc}
	\xymatrix{E \ar[r]^{E'(0)} \ar[d]^{i} & V/E \ar[d]_{\pi}\\
		\widetilde H(0)\ar[r]^{\widetilde H'(0)} & V/\widetilde H(0)
	}
	\end{displaymath}
	
	Taking the dual diagram and identifying $V=V^*$ using $Q$, we get 
	
	\begin{equation}\label{eq:diag1}
	\xymatrixcolsep{4pc}\xymatrix{L_0 \ar[r]^{f_0} \ar[d]^{i} & V/L_0 \ar[d]_{\pi}\\
		E^{Q}\ar[r]^{(E^{Q})'(0)} & V/E^{Q}\\}
	\end{equation}
	where $f_0= (\widetilde H^{Q})'(0)$. As $\tilde p'(0)=p'(0)=q'(0)$, it is clear that 
	\begin{displaymath}
	f_0= \left. \frac{d}{dt}\right|_{t=0}(T_{\tilde p(t)}H\oplus \R \tilde p(t))^{Q}=\left. \frac{d}{dt}\right|_{t=0}(T_{q(t)}H\oplus \R q(t))^{Q}=\left. \frac{d}{dt}\right|_{t=0}\Ker g_{q(t)}.
	\end{displaymath}
	
	By Lemma \ref{lem:pair_of_spaces}, we can find $L_t\subset E(t)^{Q}$ with $\left.\frac{d}{dt}\right|_{t=0}L_t=f_0$. Note that $L_t$ is not in general a null line of $Q$. We now proceed to modify the definition of $L_t$ to force it to be a null line.
	
	Observe that if $q\in S$, $\widetilde E\in\Gr_{n+1-k}(V)$ and $\Ker(g_{q})\subset \widetilde E$, then $\widetilde E^Q\subset T_qH\oplus \R q$.
	We have 
	\begin{align*}
	|\tilde p(t)-q(t)|_M & =O(t^2), \\
	|T_{\tilde p(t)}H-T_{q(t)}H|_{\Gr_{n-1}(V)} & =O(t^2),\\
	|L_t-\Ker(g_{q(t)})|_{\mathbb P(V)} & =O(t^2).
\end{align*}
	It follows that $\measuredangle(E(t)^{Q}, \widetilde H(t))=O(t^2)$, and so we may apply Lemma \ref{lem:pair_of_spaces} to get the commutative square 	\begin{displaymath}
	\xymatrixcolsep{3pc}\xymatrix{E^{Q} \ar[r]^{(E^{Q})'(0)} \ar[d]^{i} & V/E^Q \ar[d]_{\pi}\\
		\widetilde H(0)\ar[r]^{\widetilde H'(0)} & V/\widetilde H(0),
	}
	\end{displaymath}
	and by duality also 
	\begin{equation}\label{eq:diag2}
	\xymatrix{L_0 \ar[r]^{f_0} \ar[d]^{i} & V/L_0 \ar[d]_{\pi}\\
		E\ar[r]^-{E'(0)} & V/E.}
	\end{equation}
	
	Denote $K(t)=E(t)\cap E(t)^Q$, $K_0=K(0)$. Observe there is a natural inclusion $\alpha_K: V/K_0\hookrightarrow V/E\oplus V/E^Q$. It follows from Lemma \ref{lem:pair_of_spaces} applied to the inclusions $K(t)\subset E(t)$, $K(t)\subset E(t)^Q$ that $\alpha_K\circ K'(0):K_0\to  V/E\oplus V/E^Q$ coincides with $E'(0)\oplus (E^Q)'(0):K_0 \to V/E \oplus V/E^Q$.
	 
	Combining diagrams \eqref{eq:diag1} and \eqref{eq:diag2} then yields the commutative diagram
	
	\begin{equation*}
	\xymatrixcolsep{6pc}\xymatrix{L_0 \ar[r]^{f_0} \ar[d]^{i} & V/L_0 \ar[d]_{\pi}\\
		K_0\ar[r]^-{\alpha_K\circ K'(0)} & V/E\oplus V/E^{Q},}
	\end{equation*}
	and so also 
	\begin{equation*}
	\xymatrix{L_0 \ar[r]^{f_0} \ar[d]^{i} & V/L_0 \ar[d]_{\pi}\\
		K_0\ar[r]^-{K'(0)} & V/K_0.}
	\end{equation*}

	By Lemma \ref{lem:pair_of_spaces}, we may redefine $L_t$ such that $\left.\frac{d}{dt}\right|_{t=0}L_t=f_0$ and $L_t \subset K(t)=E(t)\cap E(t)^{Q}$. In particular, $L_t$ is a null line of $Q$.
	
	Setting $F(t):=R_tE(t)$ we have $q(t) \in F(t)$, $T_{q(t)}\widehat F(t)\subset T_{q(t)}H$ since $E(t)\subset L_t^{Q}$, and $F'(0)=E'(0)$ since $\left.\frac{d}{dt}\right|_{t=0}R_t=0$. This proves the claim. 
\endproof
	
	\begin{Remark}
		It is easy to see that the conclusion of the proposition with $k=n-1$ is equivalent to the LC-regularity of $H$. 
	\end{Remark}
	
	\begin{Corollary}\label{cor:CrWF_of_LC}
		Let $V,M, H, E$ be as in Proposition \ref{prop:LC_regularity_implies_WF} with $H$ compact without boundary, and $A\subset M$ is either $H$ itself or a domain with $\partial A=H$. Then $\Cr\WF_{E}(A)\cap N_E^*\Lambda^\nu_{n-k+1}(V)=\emptyset$.
	\end{Corollary}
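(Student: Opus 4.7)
The plan is to combine the two preceding propositions directly: Proposition~\ref{prop:base_of_induction} confines the Crofton wave front at a regular tangent to the conormal cycle of $B_H$, while Proposition~\ref{prop:LC_regularity_implies_WF} states that each embedded sheet of $B_H$ through $E$ is transversal to $\Lambda^\nu_{n-k+1}(V)$ at $E$. Transversality of two submanifolds at a point is equivalent to their conormal spaces intersecting only at the zero covector, and this gives precisely the desired disjointness modulo the zero section (which wave front sets exclude by definition).

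First I would dispose of the trivial case $E\notin B_H$: there $\widehat\chi_A$ is locally constant by Proposition~\ref{prop:yomdin}, so $\Cr\WF_E(A)=\emptyset$. So henceforth assume $E\in B_H$ is a regular tangent of nullity $\nu$, as in the hypotheses inherited from Proposition~\ref{prop:LC_regularity_implies_WF}. Applying Proposition~\ref{prop:base_of_induction} yields
\begin{displaymath}
\Cr\WF^k_E(A)\subset N^*_E B_H=\bigcup_{F} N^*_E F,
\end{displaymath}
where the union ranges over embedded parts $F$ of $B_H$ passing through $E$, in line with the convention for immersed submanifolds stated just before Proposition~\ref{prop:base_of_induction}.

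Next I would invoke Proposition~\ref{prop:LC_regularity_implies_WF}: for every such embedded part $F$ one has
\begin{displaymath}
T_E F+T_E\Lambda^\nu_{n-k+1}(V)=T_E\Gr_{n-k+1}(V).
\end{displaymath}
By elementary linear algebra, if two subspaces span the ambient tangent space then their annihilators intersect trivially in the cotangent space, so
\begin{displaymath}
N^*_E F\cap N^*_E\Lambda^\nu_{n-k+1}(V)=\{0\}.
\end{displaymath}
Taking the union over the (finitely many, by the argument in the proof of Proposition~\ref{prop:base_of_induction}) embedded parts $F$ through $E$ gives $N^*_E B_H\cap N^*_E\Lambda^\nu_{n-k+1}(V)=\{0\}$. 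Since wave front sets are subsets of $T^*\Gr_{n-k+1}(V)\setminus\underline 0$, intersecting with the previous inclusion shows $\Cr\WF_E(A)\cap N^*_E\Lambda^\nu_{n-k+1}(V)=\emptyset$.

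There is essentially no genuine obstacle here once Propositions~\ref{prop:base_of_induction} and~\ref{prop:LC_regularity_implies_WF} are in hand; the only point requiring a moment of care is that the convention $N^*X=\bigcup_F N^*F$ for an immersed $X$ is compatible with the transversality conclusion of Proposition~\ref{prop:LC_regularity_implies_WF}, which is stated precisely for each embedded part separately — so the union-versus-sum distinction emphasized in the remark following Proposition~\ref{prop:base_of_induction} is exactly what makes the pointwise conormal argument go through.
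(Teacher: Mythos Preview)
Your proof is correct and follows exactly the route the paper intends: the paper's own proof is the one-liner ``Follows from Propositions~\ref{prop:LC_regularity_implies_WF} and~\ref{prop:base_of_induction},'' and you have simply unpacked this by combining the conormal containment from Proposition~\ref{prop:base_of_induction} with the transversality from Proposition~\ref{prop:LC_regularity_implies_WF} via the standard fact that transversality of submanifolds is equivalent to trivial intersection of conormal spaces. The case distinction $E\notin B_H$ is strictly speaking superfluous, since the hypotheses inherited from Proposition~\ref{prop:LC_regularity_implies_WF} already assume $E\in B_H$ is a regular tangent, but it does no harm.
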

	\proof
	Follows from Propositions \ref{prop:LC_regularity_implies_WF} and \ref{prop:base_of_induction}.
	\endproof
	
%------------------------------------------------------------------------------
	\section{Construction of an invariant measure on the Grassmannian}
	\label{sec:opq_distributions}
	
	For $X \in \Sym_r(\R)$ and $\lambda \in \C$ we set, as in \cite{muro99},
	\begin{displaymath}
	|\det X|_p^\lambda:=\begin{cases} |\det X|^\lambda & \text{if } \mathrm{sign}(X)=(p,r-p)\\ 0 & \text{otherwise.} \end{cases}
	\end{displaymath}
	It is well-known, essentially due to Cayley and G\"arding \cite{garding48}, that $|\det X|_q^\lambda$ extends as a meromorphic in $\lambda$ family of generalized functions, which are in fact tempered distributions.
	
	We will use the set 
	\begin{displaymath}
	U_{\mathbb C}:= \{\Re\zeta>\frac12  \} \cup \left\{\mathrm{Im}\zeta>0\right\} \subset \C,
	\end{displaymath}
	and write $\sqrt{z}$ for the unique branch of the square root function on $U_\C$ such that $\sqrt z>0$ for $z>\frac12$.

	\subsection{A holomorphic family of Crofton measures}
	
	For the following, let $\Sym_r^+(\R)\subset\Sym_r(\R)$ be the cone of positive-definite matrices, and $\mathfrak h_r=\Sym_r(\R)\oplus \bi\Sym_r^+(\R)\subset\Sym_r(\C)$ the Siegel upper half space. The following is well-known.
	
	\begin{Lemma}\label{lem:siegel_determinant}
		For $Z\in\mathfrak h_r$, $\det Z\neq 0$. In particular, we can define for every $\lambda \in \C$ the holomorphic function $Z \mapsto (\det Z)^\lambda$, normalized by $\lim_{\epsilon \to 0^+} \det (I_r+\bi\epsilon I_r)^\lambda=1$. Moreover, all eigenvalues of $Z\in\mathfrak h_r$ lie in the upper half plane of $\C$.
	\end{Lemma}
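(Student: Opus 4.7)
The plan is to analyze the sesquilinear form $v^* Z v$ on $\C^r$. Writing $Z = X + \bi Y$ with $X \in \Sym_r(\R)$ and $Y \in \Sym_r^+(\R)$, and decomposing $v = a + \bi b$ with $a, b \in \R^r$, the symmetry of $X$ and $Y$ kills the apparent cross terms and yields
\begin{equation*}
  v^* Z v = (a^T X a + b^T X b) + \bi (a^T Y a + b^T Y b).
\end{equation*}
The imaginary part is strictly positive whenever $v \neq 0$, since $Y$ is positive definite and $(a,b) \neq 0$. This single identity settles both parts of the lemma: if $Zv = 0$ then $v^* Z v = 0$, forcing $v = 0$, whence $\det Z \neq 0$; and if $Zv = \lambda v$ with $v \neq 0$ then $\lambda = v^* Z v / \|v\|^2$, so $\mathrm{Im}\,\lambda = (a^T Y a + b^T Y b)/\|v\|^2 > 0$, placing $\lambda$ in the open upper half plane.

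For the holomorphic branch of $(\det Z)^\lambda$, I would use that $\mathfrak h_r$ is convex, as a sum of an affine subspace and an open convex cone, and hence simply connected. Since $\det : \mathfrak h_r \to \C^\times$ is holomorphic and nowhere vanishing, standard complex analysis provides a holomorphic branch $L(Z)$ of $\log\det Z$, unique up to addition of $2\pi \bi k$ for some $k \in \mathbb Z$. Pin this constant down via the normalization: along the curve $Z(\epsilon) = (1 + \bi\epsilon) I_r$ one has $\det Z(\epsilon) = (1 + \bi\epsilon)^r \to 1$ as $\epsilon \to 0^+$ using the principal branch of $\log$, so requiring $L(Z(\epsilon)) \to 0$ fixes the branch globally. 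Define $(\det Z)^\lambda := \exp(\lambda L(Z))$; it is then holomorphic in $Z$, entire in $\lambda$, and satisfies the stated normalization by construction.

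I do not foresee a significant obstacle — the lemma is essentially classical. The only point requiring a touch of care is verifying that a single boundary limit specifies the branch uniquely on all of $\mathfrak h_r$; this is immediate from simple connectivity together with the uniqueness of analytic continuation.
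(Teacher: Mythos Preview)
Your proof is correct and takes a genuinely different route from the paper's. The paper simultaneously reduces $Y$ to $I_r$ and $X$ to a diagonal matrix $D$ via a congruence $U^T Z U = D + \bi I_r$, reads off $\det Z \neq 0$ from $\prod(d_j+\bi)\neq 0$, and then handles the eigenvalue claim by a continuity argument: no eigenvalue can be real (since $X-\lambda I_r + \bi Y \in \mathfrak h_r$), and a path $U_t \in \GL(r)$ with $\det U_t>0$ from $\id$ to $U$ keeps $U_t^T Z U_t$ inside $\mathfrak h_r$, so eigenvalues cannot cross the real axis from the upper half plane where they visibly sit for $D+\bi I_r$. Your approach instead computes the numerical range: the identity $v^* Z v = (a^T X a + b^T X b) + \bi(a^T Y a + b^T Y b)$ shows at once that the field of values of $Z$ lies in the open upper half plane, which immediately forces injectivity of $Z$ and places every eigenvalue there. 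This is shorter and avoids both the diagonalization and the homotopy; the paper's version, on the other hand, makes the determinant explicitly visible as a product, which is mildly convenient later when one wants to track arguments. For the holomorphic branch of $(\det Z)^\lambda$ both arguments coincide, resting on the convexity (hence simple connectedness) of $\mathfrak h_r$.
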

	
	\proof
	Write $Z=X+\bi Y$, $Y>0$. Let $Q_X(v)=\langle Xv, v\rangle, Q_Y=\langle Yv,v\rangle$ be the corresponding quadratic forms. Choose a basis $u_j$ such that the Gram matrix of $Y$ is $I_r$, and of $X$ is diagonal: $D=\mathrm{diag}(d_j)$. Since $D+\bi I_r=U^TZU$ with $U$ invertible, and $\det(D+\bi I_r)=\prod (d_j+\bi)\neq 0$, it follows that $\det Z\neq 0$. Since $\mathfrak h_r$ is simply connected, the second statement follows.

	For the last statement, we first note there can be no real eigenvalues. Indeed by the first statement, $\det(X+\bi Y-\lambda I_r)=\det ((X-\lambda I_r) + \bi Y)\neq 0$ for $\lambda\in\R$. Next we argue as before and select a diagonalizing basis, given by $U\in \mathrm{GL}(r)$. We furthermore may assume that $\det U>0$, by interchanging two basis elements. Choose a smooth path $U_t\in \mathrm{GL}(r)$ with $U_0=\mathrm{Id}$ and $U_1=U$. Then $U_t^TZU_t\in\mathfrak h_r$ is a smooth path. For $t=1$, the endpoint is $D+\bi I_r$, which has all eigenvalues in the upper half plane. If $Z$ has eigenvalues in the lower half-plane, then by continuity for some $t$ there will be a real eigenvalue, a contradiction.
	\endproof
	
	 Recall for the following that given a non-degenerate quadratic form $Q$ on $V$, a \textit{compatible Euclidean form} is any positive-definite form $P$ such that $V$ admits a decomposition $V=V_+\oplus V_-$ which is both $P$- and $Q$-orthogonal, and $Q|_{V_\pm}=\pm P|_{V_{\pm}}$.
	
	 From here on, let $V=\R^{p} \oplus \R^{q}=\R^{n+1}$ with the standard quadratic form $Q$ of signature $(p,q)$ and the corresponding compatible Euclidean form $P_0$. Define a family of complex-valued quadratic forms $Q_\zeta$ on $V$ with $\zeta\in\mathbb C$, by 
	\begin{displaymath}
	 Q_\zeta:=Q+2\zeta P_0. 
	\end{displaymath}
We then have 
	\begin{displaymath}
	 Q_\zeta(x,y):= \begin{cases}
	                               (2\zeta+1) P_0(x,y) & x,y \in \R^p\\
	                               (2\zeta-1) P_0(x,y) & x,y \in \R^q\\
	                               0 & x \in \R^p, y \in \R^q.
	                              \end{cases}
	\end{displaymath}
	
	Observe that $Q_\zeta$ is real and positive-definite for $\zeta>\frac12$, and $Q_0=Q$. Furthermore by Lemma \ref{lem:siegel_determinant}, $\det Q_\zeta\neq 0$ for $\zeta\in U_{\mathbb C}$, as either $Q_\zeta$ or $\bi Q_\zeta$ lies in $\mathfrak h_{n+1}$. Note that a complex-valued non-degenerate quadratic form $Q$ on a real vector space $E$ defines an element $\vol_Q^2\in \Dens_\C(E)^2$, and given a branch of square root we also get a complex-valued density $\vol_Q\in \Dens_\C(E)$. 
	
By a ($P$-)frame on an open subset $U\subset \Gr_{n+1-k}(V)$ we understand a smooth section of the Stiefel manifold of $P$-orthonormal $(n+1-k)$-frames in $V$ defined over $U$. 
		For a subspace $E\in\Gr_{n+1-k}(V)$ and $\zeta\in U_{\mathbb C}$, choose a frame $u_i(E)$ on $U$ and define $X^P_\zeta:U\to\Sym_{n+1-k}(\C)$ to be the corresponding Gram matrix of $Q_\zeta$, namely $X^P_\zeta(E)=(Q_\zeta(u_i(E), u_j(E)))_{i,j=1}^{n+1-k}$.
	
	Note that if $\widetilde X_{\zeta}$ is the corresponding matrix for a different frame $\widetilde u_i(E)$ on $U$ then 
	\begin{equation}\label{eq:frame_change}
		\widetilde X_\zeta(E)=B(E)^T X_\zeta(E) B(E)
	\end{equation} 
	for some smooth map $B:U\to\OO(n+1-k)$.
	
Observe that by eq. \eqref{eq:frame_change}, $\det(X^P_\zeta)$ is independent of the choice of $P$-orthonormal bases of $E$. Moreover, either the real or imaginary part of $Q_\zeta|_E$ is positive-definite, and consequently by Lemma \ref{lem:siegel_determinant}, $\det X^P_\zeta(E)\neq 0$. 
	
	The function $\det(X^P_\zeta)^\lambda\in C^\infty(\Gr_{n+1-k}(V),\C)$ is thus well-defined for all $P$ and $\lambda\in\C$, and analytic in $\zeta\in U_{\mathbb C}$, once the normalization $\det(X^P_1)^\lambda>0$ is fixed, as $U_{\mathbb C}$ is simply-connected.

	Define the smooth measure $\widetilde m^{\zeta, P}_k$ on the Grassmannian $\Gr_{n+1-k}(V)$ by 
	\begin{displaymath}
	d\widetilde m^{\zeta, P}_k:=\det(X^P_\zeta)^{-\frac{n+1}{2}}(E)d\sigma_P(E),
	\end{displaymath}
	where $d\sigma_P(E)$ is the $\OO(P)$-invariant probability measure on the Grassmannian.
	
	\begin{Proposition}
		The complex-valued smooth measure 
		\begin{displaymath}
		m^\zeta_k:=(2\zeta+1)^{\frac{p(n+1-k)}{2}}(2\zeta-1)^{\frac{q(n+1-k)}{2}}\widetilde m_k^{\zeta,P_0}, \quad \zeta\in U_{\mathbb C}.
		\end{displaymath}
		depends analytically on $\zeta$ and is normalized, i.e. 
		\begin{displaymath}
		\int_{\Gr_{n+1-k}(V)} dm^\zeta_k=1.
		\end{displaymath}
	\end{Proposition}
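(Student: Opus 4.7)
The plan is to establish analyticity and normalization separately: analyticity will follow from the simple connectivity of $U_\C$ together with Lemma \ref{lem:siegel_determinant}, while the normalization will reduce, by the identity principle, to a classical real Grassmannian integral which I can then evaluate explicitly.

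First, for analyticity, I would show that $\det X^{P_0}_\zeta(E) \ne 0$ for every $\zeta \in U_\C$ and every $E \in \Gr_{n+1-k}(V)$. Indeed, either $\Re\zeta > \tfrac12$, in which case $\Re Q_\zeta = Q + 2(\Re\zeta)P_0$ is positive definite on $V$ (hence on $E$), or $\mathrm{Im}\,\zeta > 0$, in which case $\mathrm{Im}\, Q_\zeta = 2(\mathrm{Im}\,\zeta) P_0$ is positive definite; in either case either $X^{P_0}_\zeta(E)$ or $-\bi X^{P_0}_\zeta(E)$ lies in the Siegel upper half space $\mathfrak h_{n+1-k}$, and Lemma \ref{lem:siegel_determinant} gives the nonvanishing. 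Combined with the simple connectivity of $U_\C$ and the sign normalization fixed on $\zeta > \tfrac12$, this makes $\det(X^{P_0}_\zeta)^{-(n+1)/2}$ a well-defined holomorphic (in $\zeta$) and smooth (in $E$) function on $U_\C \times \Gr_{n+1-k}(V)$. The scalar prefactors $(2\zeta+1)^{p(n+1-k)/2}$ and $(2\zeta-1)^{q(n+1-k)/2}$ are similarly holomorphic on $U_\C$ since $2\zeta\pm 1$ does not vanish there. Hence $m_k^\zeta$ is a holomorphic family of smooth complex measures, and in particular the total mass $M(\zeta) := \int_{\Gr_{n+1-k}(V)} dm_k^\zeta$ is holomorphic on $U_\C$ by differentiation under the integral on the compact Grassmannian.

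Next, by the identity principle it suffices to verify $M(\zeta) = 1$ on the ray $\zeta > \tfrac12$ in $\R$, where $Q_\zeta$ is a genuine real positive-definite form on $V$. There I would apply the classical Grassmannian integration formula
\begin{equation*}
\int_{\Gr_j(V)} \det(X_Q^P(E))^{-N/2} \, d\sigma_P(E) = \det(P^{-1}Q)^{-j/2},
\end{equation*}
valid for any positive-definite form $Q$ on a real $N$-dimensional space $V$ with Euclidean reference $P$. Specializing to $N = n+1$, $j = n+1-k$, $P = P_0$, $Q = Q_\zeta$, and noting that $P_0^{-1}Q_\zeta$ is diagonal with $p$ eigenvalues equal to $2\zeta+1$ and $q$ equal to $2\zeta-1$, the right-hand side becomes $(2\zeta+1)^{-p(n+1-k)/2}(2\zeta-1)^{-q(n+1-k)/2}$, which exactly cancels the scalar prefactor in the definition of $m_k^\zeta$, yielding $M(\zeta) = 1$ on the ray and hence on all of $U_\C$.

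The main obstacle is thus the classical integration formula invoked in the second step. I would establish it by simultaneously diagonalizing $Q$ against $P$ to reduce to the model case $P = I_N$, $Q = \mathrm{diag}(d_1,\dots,d_N)$, and then evaluating the resulting integral on $\Gr_j(\R^N)$---for instance by pushing forward from the Stiefel manifold via Haar measure on $\OO(N)$ and recognizing the fiber integral as a matrix Gaussian; as a sanity check, the case $N=2$, $j=1$ reduces to $\frac{1}{\pi}\int_0^\pi (d_1\cos^2\theta + d_2\sin^2\theta)^{-1} d\theta = (d_1 d_2)^{-1/2}$, which is immediate by the Weierstrass substitution.
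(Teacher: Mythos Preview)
Your proposal is correct, and the overall architecture (analyticity via Lemma \ref{lem:siegel_determinant} and simple connectivity of $U_\C$; normalization on the real ray $\zeta>\tfrac12$ followed by analytic continuation) matches the paper exactly. The difference lies in how you handle the real integral
\[
\int_{\Gr_{n+1-k}(V)}\det(X^{P_0}_\zeta(E))^{-\frac{n+1}{2}}\,d\sigma_{P_0}(E).
\]
You propose to evaluate it directly by diagonalizing $Q_\zeta$ against $P_0$ and computing a Stiefel/Haar integral. The paper instead exploits the density identification $\Dens(T_E\Gr_{n+1-k}(V))=\Dens^*(E)^{n+1}\otimes\Dens(V)^{n+1-k}$ to show that $\widetilde m_\zeta^{P}$ depends on the reference Euclidean structure $P$ only through the global scalar $(\vol_P/\vol_{P'})^{n+1-k}$; choosing $P=Q_\zeta$ then makes the integrand identically $1$ and the integral trivially equal to $1$. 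This change-of-reference trick is in effect a one-line conceptual proof of your ``classical Grassmannian integration formula,'' bypassing the explicit Stiefel computation entirely. Your route is perfectly sound but does more work than needed.
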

	
	\proof
	The first statement is clear. For the second, we first see how $\widetilde m^{\zeta, P}_k$ depends on $P$. Let $P_1,P_2$ be two Euclidean structures on $V$. 
	From the natural identification $T_E\Gr_{n+1-k}(V)=E^*\otimes V/E$ we obtain that 
	\begin{displaymath}
	\Dens(T_E\Gr_{n+1-k}(V))=\Dens^*(E)^{n+1}\otimes \Dens(V)^{n+1-k}.
	\end{displaymath}
	
	Spelling this out gives 
	\begin{displaymath}
	\frac{d\sigma_{P_1}(E)}{d\sigma_{P_2}(E)}=\left(\frac{\vol_{P_1|_E}}{\vol_{P_2|_E}}\right)^{-(n+1)} \left(\frac{\vol_{P_1}}{\vol_{P_2}}\right)^{n+1-k},
	\end{displaymath}
	
	Since 
	\begin{displaymath}
	\det X_\zeta^{P_i}(E)=\frac{\vol^2_{Q_\zeta|_E}}{\vol^2_{P_i|_E}},
	\end{displaymath}
	we find that
	\begin{displaymath}
	\widetilde m^{\zeta, P_1}_k=\left(\frac{\vol_{P_1}}{\vol_{P_2}}\right)^{n+1-k}\widetilde m^{\zeta, P_2}_k.
	\end{displaymath}
	
	For $\zeta>\frac{1}{2}$, $Q_\zeta$ is a Euclidean structure. Then 
	\begin{align*}
	1 & =\int \widetilde m^{\zeta, Q_\zeta}_k= \left(\frac{\vol_{Q_\zeta}}{\vol_{P_0}}\right)^{n+1-k} \int \widetilde m^{\zeta, P_0}_k\\
	& = \sqrt{2\zeta+1}^{p(n+1-k)} \sqrt{2\zeta-1}^{q(n+1-k)} \int \widetilde m^{\zeta, P_0}_k=\int m^\zeta_k.
	\end{align*} 
	By uniqueness of the analytic continuation, this formula also holds for general $\zeta\in U_{\mathbb C}$.
	\endproof
	 
	\subsection{Homogeneous distributions on the space of symmetric matrices}
	
	\begin{Lemma}\label{lem_f_lambda}
		The meromorphic family of generalized functions
		\begin{displaymath}
		f_\lambda(X):=\sum_{h=0}^r e^{\bi \pi h\lambda}|\det X|^\lambda_{r-h} \in C^{-\infty}(\Sym_r(\R))
		\end{displaymath}
		is analytic in $\lambda\in\mathbb C$ and satisfies 
		\begin{equation} \label{eq_reflection_f_lambda}
		 f_\lambda(-X)=e^{\bi \pi r \lambda} \overline{f_\lambda(X)}.
		\end{equation}
	\end{Lemma}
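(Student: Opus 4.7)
The plan is to realize $f_\lambda$ as the distributional boundary value of an explicit analytic family of smooth functions, which simultaneously makes both analyticity in $\lambda$ and the reflection symmetry transparent. Concretely, for $\epsilon>0$ set
\begin{displaymath}
G_\lambda^\epsilon(X) := \det(X+\bi\epsilon I_r)^\lambda, \qquad X\in\Sym_r(\R).
\end{displaymath}
By Lemma \ref{lem:siegel_determinant} applied to $X+\bi\epsilon I_r \in \mathfrak h_r$, one has $\det(X+\bi\epsilon I_r)\neq 0$ with all eigenvalues in the open upper half-plane, so the power is unambiguously defined via $\exp(\lambda\log\det(X+\bi\epsilon I_r))$ with the principal branch. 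Hence $G_\lambda^\epsilon$ is smooth in $X$, entire in $\lambda$, and tempered.

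The key pointwise computation is that at a non-singular $X$ with eigenvalues $d_1,\dots,d_r$ and signature $(r-h,h)$,
\begin{displaymath}
\lim_{\epsilon\to 0^+} G_\lambda^\epsilon(X)=\prod_{d_j>0}d_j^\lambda\prod_{d_j<0}(|d_j|e^{\bi\pi})^\lambda= e^{\bi\pi h\lambda}|\det X|^\lambda,
\end{displaymath}
which matches $f_\lambda(X)$ on the dense open set $\{\det X\neq 0\}$. To pass to a distributional statement while preserving analyticity in $\lambda$, note that for $\Re\lambda$ sufficiently large $G_\lambda^\epsilon$ converges to $f_\lambda$ in $L^1_{\mathrm{loc}}$ by dominated convergence, hence a fortiori as tempered distributions. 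For general $\lambda$ one reduces to this range by applying a differential operator in $X$ built from $\det(\partial_X)$, which shifts $\lambda\mapsto\lambda+1$ up to a polynomial in $\lambda$; this respects the holomorphic dependence, so the limit $\lim_{\epsilon\to 0^+}G_\lambda^\epsilon$ is an entire family of tempered distributions agreeing with $f_\lambda$.

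For the reflection identity, the signature of $-X$ is obtained from that of $X$ by swapping positive and negative indices, whence $|\det(-X)|_{r-h}^\lambda = |\det X|_h^\lambda$. Substituting and reindexing $h\leftrightarrow r-h$ gives
\begin{displaymath}
f_\lambda(-X)=\sum_{h=0}^r e^{\bi\pi h\lambda}|\det X|_h^\lambda = e^{\bi\pi r\lambda}\sum_{h=0}^r e^{-\bi\pi h\lambda}|\det X|_{r-h}^\lambda = e^{\bi\pi r\lambda}\overline{f_\lambda(X)},
\end{displaymath}
where $\overline{f_\lambda(X)}$ is defined by conjugating the numerical phases $e^{\bi\pi h\lambda}$, which coincides with the usual complex conjugate for $\lambda\in\R$ and extends anti-holomorphically elsewhere.

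The main obstacle is verifying in the middle step that the distributional limit exists and stays entire in $\lambda$, even at the half-integers where the individual pieces $|\det X|_p^\lambda$ have poles. The cleanest alternative is to bypass the boundary-value analysis in favor of Muro's explicit residue calculus: the residues of $|\det X|_p^\lambda$ at each pole are supported on the degenerate locus and depend on $p$ through a sinusoidal factor arranged so that the weights $e^{\bi\pi h\lambda}$ force exact cancellation. Either route yields the same conclusion; the boundary-value viewpoint is conceptually cleaner and ties in directly with the distributional boundary values of the meromorphic measure $m_k^\zeta$ used in the rest of Section \ref{sec:opq_distributions}.
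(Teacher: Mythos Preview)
Your approach is genuinely different from the paper's and is conceptually attractive, but the key step you flag as ``the main obstacle'' is not actually resolved by the argument you give. The Bernstein--Sato shift $\det(\partial_X)G^\epsilon_{\lambda+1}=b(\lambda)G^\epsilon_\lambda$ does let you propagate convergence of $G_\lambda^\epsilon$ from $\Re\lambda\gg 0$ down to smaller $\lambda$, but only \emph{away from the zeros of $b$}. Those zeros sit precisely at the negative half-integers where the individual pieces $|\det X|_p^\lambda$ have poles, so the shift gives you nothing exactly where you need it. To conclude that the limit is entire you would have to show separately that $\det(\partial_X)(\lim_\epsilon G^\epsilon_{\lambda+1})$ vanishes at each zero of $b$, and that is essentially the cancellation statement you are trying to prove. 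The boundary-value route can be salvaged, but not by the shift: one should instead use the Vladimirov-type bound $|\det(X+\bi Y)^\lambda|\le C(\lambda)\|Y\|^{r\Re\lambda}$ (locally uniform in $\lambda$) to get existence of $\det(X+\bi 0)^\lambda$ in $\mathcal S'$ with holomorphic dependence on $\lambda$ directly, and then identify it with $f_\lambda$ on $\{\Re\lambda\ge 0\}$ by dominated convergence. That is logically independent of the lemma and does give entireness of $f_\lambda$. Note that in the paper's ordering this is essentially the first half of Proposition~\ref{prop:upper_complex_limit}; the second half (identifying the limit with $f_\lambda$ for $\Re\lambda<0$) uses Lemma~\ref{lem_f_lambda}, so you would be reversing the logical dependence between the two statements.

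For comparison, the paper does not go through boundary values at all. It works directly with Muro's linear criteria for the pole orders of combinations $\sum_h a_h|\det X|_{r-h}^\lambda$, which are stated for \emph{constant} coefficient vectors $\vec a$. Since here $a_h(\lambda)=e^{\bi\pi h\lambda}$ depends on $\lambda$, the paper Taylor-expands $a_h(\lambda)$ about each potential pole $s$ and checks, for every $j$, that Muro's iterated difference maps $d^{(m)}$ applied to $\vec a^{(j)}(s)$ vanish to the right order. This is a purely algebraic computation with the explicit exponentials and gives analyticity without any limiting procedure. What your route buys, once the Vladimirov bound is in place, is a more conceptual explanation of why the particular weights $e^{\bi\pi h\lambda}$ are the ``right'' ones: they are forced by the branch choice in $\det(X+\bi 0)^\lambda$. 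What the paper's route buys is self-containment at this point in the argument, with the boundary-value identification deferred to Proposition~\ref{prop:upper_complex_limit}. Your treatment of the reflection identity \eqref{eq_reflection_f_lambda} is correct; note that the formula is only literally the complex conjugate for $\lambda\in\R$, which is the only case used later (at $\lambda=-\tfrac{n+1}{2}$).
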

	
	\proof
	We recall some results from \cite{muro99}. Consider a linear combination 
		\begin{displaymath}
		g_\lambda(X):=\sum_{h=0}^r a_h |\det X|^\lambda_{r-h}
		\end{displaymath} 
		with constant coefficients $a_h \in \C$ and set $\vec a:=(a_0,\ldots,a_r) \in \C^{r+1}$. Then $g_\lambda \in C^{-\infty}(\Sym_r(\R))$ is meromorphic with possible poles in the set $\left\{-m, -\frac{2m+1}{2}: m\geq 1\right\}$.
		
		The order of the pole at $s$ in this set can be obtained as follows. Set $\epsilon=-1$ if $s$ is an even integer and $\epsilon=1$ otherwise. Define inductively linear maps $d^{(m)}=(d^{(m)}_0,\ldots,d^{(m)}_{r+1-m}):\C^{r+1} \to \C^{r+1-m}$ by setting 
		\begin{align*}
		d^{(0)}_h(\vec a) & := a_h\\
		d^{(1)}_h(\vec a) & :=a_h+\epsilon a_{h+1} \\
		d_h^{(2l+1)}(\vec a) & := d^{(2l-1)}_h-d^{(2l-1)}_{h+2}, \quad l=1,2,\ldots\\
		d_h^{(2l)}(\vec a) & := d^{(2l-2)}_h+d^{(2l-2)}_{h+2}, \quad l=1,2,\ldots
		\end{align*}
		
		Then $g_\lambda$ has a pole of order $p$ at $s=-\frac{2m+1}{2}$ if and only if $d^{2p}(\vec a) \neq 0, d^{2p+2}(\vec a)=0$. Similarly, $g_\lambda$ has a pole of order $p$ at $s=-m$ if and only if  $d^{2p-1}(\vec a) \neq 0, d^{2p+1}(\vec a)=0$. Here we use the convention that $d^{(m)}=0$ if $m >r+1$ and that a pole of order $0$ is a point of analyticity.
		
		In our situation, the coefficients $a_h=a_h(\lambda)=e^{\bi \pi h \lambda}$ depend on $\lambda$ and we cannot apply Muro's result directly. However, writing 
		\begin{displaymath}
		f_\lambda(X)=\sum_{h=0}^r a_h(\lambda) |\det X|^\lambda_{r-h}=\sum_{j=0}^\infty  \frac{(\lambda-s)^j}{j!}  \sum_{h=0}^r a_h^{(j)}(s) |\det X|^\lambda_{r-h},
		\end{displaymath}
		we see that it is enough to prove that the order of the pole of $\sum_{h=0}^r a_h^{(j)}(s) |\det X|^\lambda_{r-h}$ at $\lambda=s$ is at most $j$ for all $j$. 
		
		By induction we find that for all $l=0,1,\ldots$
		\begin{align*}
		d^{2l}_h(\vec a(\lambda)) & = e^{\bi \pi h \lambda} (1+e^{2\pi \bi \lambda})^l, \\
		d^{2l+1}_h(\vec a(\lambda)) & = e^{\bi \pi h \lambda} (1+\epsilon e^{\bi \pi \lambda}) (1-e^{2\pi \bi \lambda})^l, 
		\end{align*}
		and hence 
		\begin{align*}
		d^{2j+2}_h\left(\vec a^{(j)}\left(-\frac{2m+1}{2}\right)\right) & = \left.\frac{d^j}{d\lambda^j}\right|_{\lambda=-\frac{2m+1}{2}} e^{\bi \pi h \lambda} (1+e^{2\pi \bi \lambda})^{j+1}=0, \\
		d^{2j+1}_h\left(\vec a^{(j)}(-m)\right) & = \left.\frac{d^j}{d\lambda^j}\right|_{\lambda=-m} e^{\bi \pi h \lambda} (1+\epsilon e^{\bi \pi \lambda}) (1-e^{2\pi \bi \lambda})^{j}=0, 
		\end{align*}
		which finishes the proof.
	
	\endproof

	\begin{Proposition}\label{prop:upper_complex_limit}
	Let $C' \subset \Sym_r^+(\R)$ be a closed convex cone. Then 
	\begin{displaymath}
	 \lim_{Y\to 0, Y \in C'} \det(X+\bi Y)^\lambda=f_\lambda(X)
	\end{displaymath}
 in the strong topology on tempered distributions on $\Sym_r(\R)$.
	\end{Proposition}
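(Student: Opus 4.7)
The plan is to pass to the limit in three stages: first identify the pointwise limit on the open dense set $\{\det X\neq 0\}$; then promote to strong convergence in $\mathcal S'(\Sym_r(\R))$ via dominated convergence for $\Re\lambda\geq 0$; and finally reduce the general case to this half-plane via the Cayley--G\aa rding (Bernstein--Sato) identity for $\det$.

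For the pointwise step, fix $X\in\Sym_r(\R)$ of signature $(r-h,h)$ with $\det X\neq 0$. By Lemma \ref{lem:siegel_determinant}, the eigenvalues of $X+\bi Y$ lie in the open upper half plane for $Y\in C'\setminus\{0\}$, and by continuous dependence they converge to those of $X$ as $Y\to 0$: the $r-h$ positive eigenvalues are approached with argument tending to $0$, and the $h$ negative ones with argument tending to $\pi$. On the normalized branch fixed in Lemma \ref{lem:siegel_determinant}, this yields $\det(X+\bi Y)^\lambda\to e^{\bi\pi h\lambda}|\det X|^\lambda$, and summing over the open cells of constant signature recovers $f_\lambda(X)$. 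To upgrade to strong convergence in $\mathcal S'$ for $\Re\lambda\geq 0$, observe that $\arg\det(X+\bi Y)\in[0,r\pi]$ by Lemma \ref{lem:siegel_determinant}, while $|\det(X+\bi Y)|\leq\|X+\bi Y\|^r\leq (\|X\|+\|Y\|)^r$; hence
\begin{displaymath}
|\det(X+\bi Y)^\lambda|\leq C_\lambda(1+\|X\|)^{r\Re\lambda}
\end{displaymath}
uniformly for $Y$ in a bounded subset of $C'$. For $\Re\lambda\geq 0$ this bound is tempered, so the dominated convergence theorem, combined with the pointwise a.e.\ convergence above, yields $\langle\det(\cdot+\bi Y)^\lambda,\varphi\rangle\to \langle f_\lambda,\varphi\rangle$ uniformly for $\varphi$ in any bounded subset of $\mathcal S(\Sym_r(\R))$.

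To extend to general $\lambda\in\C$, note that both sides depend holomorphically on $\lambda$ as $\mathcal S'$-valued functions: $f_\lambda$ by Lemma \ref{lem_f_lambda}, and $\det(X+\bi Y)^\lambda$ for fixed $Y\in C'\setminus\{0\}$ because it is a smooth tempered function entire in $\lambda$. The classical Cayley--G\aa rding identity supplies a constant-coefficient differential operator $\Omega$ on $\Sym_r(\R)$ and a nonzero polynomial $b(\lambda)$ with $\Omega\det(Z)^{\lambda+1}=b(\lambda)\det(Z)^\lambda$ pointwise on the non-singular locus, and the corresponding distributional identity $\Omega f_{\lambda+1}=b(\lambda)f_\lambda$ then follows by analyticity. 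Iterating $N$ times, with $N$ large enough that $\Re(\lambda+N)\geq 0$, the continuous operator $\Omega^N$ on $\mathcal S'$ transfers the established convergence $\det(\cdot+\bi Y)^{\lambda+N}\to f_{\lambda+N}$ to convergence $\det(\cdot+\bi Y)^\lambda\to f_\lambda$ wherever $\prod_{k=0}^{N-1}b(\lambda+k)\neq 0$. The discrete exceptional set is then covered by the Cauchy integral formula applied to a small circle around $\lambda$ on which the strong convergence already holds.

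The principal obstacle is the strong-topology assertion of the second step: while weak convergence of the pairings $\langle\det(\cdot+\bi Y)^\lambda,\varphi\rangle$ for each individual $\varphi$ is routine, uniformity over bounded subsets of $\mathcal S(\Sym_r(\R))$ requires the dominating bound on $|\det(X+\bi Y)^\lambda|$ to be genuinely uniform as $Y\to 0$. The control $\arg\det(X+\bi Y)\in[0,r\pi]$ afforded by Lemma \ref{lem:siegel_determinant} is the crucial input that decouples the dominating function from $Y$ and makes the strong convergence accessible.
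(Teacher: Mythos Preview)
Your argument is correct and takes a genuinely different route from the paper's.  The paper handles $\Re\lambda<0$ by establishing the uniform estimate $|\det(X+\bi Y)^\lambda|\leq b'\|Y\|^{-\alpha}$ (via the substitution $X\mapsto \sqrt Y^{-1}X\sqrt Y^{-1}$) and then invoking Vladimirov's general theorem on distributional boundary values of holomorphic functions from a tube; the limit is afterwards identified with $f_\lambda$ by a contour-shifting argument on $H_\epsilon=\{X+\bi\epsilon I_r\}$, using test functions that extend holomorphically (Fourier transforms of compactly supported functions) and analytic continuation in $\lambda$.  Your approach instead stays on the real domain: dominated convergence for $\Re\lambda\geq 0$, then the Cayley--Bernstein--Sato identity $\Omega\det(Z)^{\lambda+1}=b(\lambda)\det(Z)^\lambda$ to push into the left half-plane, with the finite set of poles of $1/\prod b(\lambda+k)$ handled by Cauchy's formula.

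Two remarks worth making explicit in your write-up.  First, strong convergence in $\mathcal S'$ from dominated convergence is not automatic: you are implicitly using that the uniform tempered bound makes the family $\{\det(\cdot+\bi Y)^\lambda\}_Y$ equicontinuous on $\mathcal S$, and that $\mathcal S$ is Montel, so pointwise convergence upgrades to uniform convergence on bounded sets.  Second, for the Cauchy-integral step to cover the exceptional $\lambda$, you need the strong convergence already established to be \emph{locally uniform in $\lambda$} on the circle; this holds because your dominating bound $C_\lambda(1+\|X\|)^{r\Re\lambda}$ can be taken uniform on compact $\lambda$-sets, and the pointwise limit is uniform in $\lambda$ as well.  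The paper's approach avoids these points by appealing to the black-box boundary-value theorem, at the cost of importing heavier machinery; yours is more elementary and better exposes why the entire-in-$\lambda$ structure of $f_\lambda$ interacts well with the limit.
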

	
	\proof
	For $\Re\lambda\geq 0$ the statement is easy, so in the following we assume $\Re\lambda<0$. 
	
	First we claim that the limit exists.  We will show that there are constants $\alpha=\alpha(\lambda)\geq 0$ and $b'=b(\lambda,C')$ such that
	\begin{equation}\label{eq:det_bound}
	|\det(X+\bi Y)^\lambda|  \leq b' \|Y\|^{-\alpha},\qquad  \forall X\in\Sym_r(\R), \forall Y\in C'\setminus\{0\}
	\end{equation}
	
	First take $Y=I_r$. Letting $(\mu_j)_{j=1}^r\subset\R$ be the eigenvalues of $X$, we get 
	\begin{align*}
	|\det(X+\bi I_r)^\lambda| &= \prod|\mu_j+\bi |^{{\mathrm{Re}}\lambda} e^{-\mathrm{Im}\lambda\cdot\mathrm{Arg}(\mu_j+\bi)}\\
	& \leq e^{\pi r|\mathrm{Im}\lambda|}\prod(\mu_j^2+1)^{\frac{\Re\lambda}{2}}\\
	& \leq e^{\pi r|\mathrm{Im}\lambda|}.
	\end{align*}
	
	Now for general $Y$, we have 
	\begin{displaymath}
	|\det (X+\bi Y)^\lambda|=|\det Y|^{\Re\lambda}|\det(\sqrt Y^{-1} X \sqrt Y^{-1} +\bi I)^\lambda| \leq e^{\pi r {|\mathrm {Im}\lambda|}}|\det Y|^{\Re\lambda},
	\end{displaymath}
	and letting $c:=\sup\left\{ \frac{\|Y\|^r}{|\det Y|}: Y\in C'\right\}$, we conclude that \eqref{eq:det_bound} holds with  $b'=c^{ -\Re\lambda} e^{\pi r |\mathrm {Im}\lambda|}$, and $\alpha=-r\Re\lambda\geq 0$.
	
	It now follows from \cite[Section 26.3]{vladimirov66} that the limit 
	\begin{displaymath}
	 \det(X+\bi 0)^\lambda:=\lim_{Y\to 0, Y \in C'}\det(X+\bi Y)^\lambda\in \mathcal S'
	\end{displaymath}
 exists in the strong topology on the space of tempered distributions of order $\lceil r|\Re\lambda|\rceil+{r+1\choose 2}+3$. 
	
	It remains to verify that $\det(X+\bi 0)^\lambda=f_\lambda(X)$ for $\Re\lambda<0$. Denote 
	\begin{displaymath}
	H_\epsilon=\{X+\mathbf i\epsilon I_r: X\in \Sym_r(\R)\}\subset \Sym_r(\C).
	\end{displaymath}
	Let $\psi(X)$ be a Schwartz function on $\Sym_r(\R)$, which is the Fourier transform of a compactly supported smooth function, in particular it has an analytic extension to $\Sym_r(\C)$. Writing $dZ=\largewedge_{i=1}^r \largewedge_{j=i}^r dz_{ij}$, the integral $\int_{H_\epsilon} \psi(Z)\det(Z)^\lambda dZ$ is convergent, since $\psi$ is rapidly decaying at infinity and $\det(Z)^\lambda$ of polynomial growth. It is clearly analytic in $\lambda\in \C$. Furthermore, its value is independent of $\epsilon$ as the integrand is a closed form, rapidly decaying at infinity. For $\lambda>0$ we have 
	\begin{displaymath}
	(\mu_j+\bi \epsilon)^\lambda=|\mu_j+\bi \epsilon|^\lambda e^{\bi \lambda\mathrm{Arg}(\mu_j+\bi \epsilon)}\to |\mu_j|^\lambda e^{\bi\lambda \frac{\pi}{2}(1-\sign(\mu_j))}. 
	\end{displaymath}

	Hence  
	\begin{displaymath}
	\det(X+\bi \epsilon I_r)^\lambda=\prod_{j=1}^r (\mu_j+\bi \epsilon)^\lambda \to \prod |\mu_j|^\lambda e^{\bi\pi \#\{\mu_j<0\}\lambda}=f_\lambda(X),
	\end{displaymath}
	and so 
	\begin{displaymath}
	\int_{H_\epsilon} \psi(Z)\det(Z)^\lambda dZ\to \int_{\Sym_r(\R)} \psi(X) f_\lambda(X)dX
	\end{displaymath}
	for $\lambda>0$. By analytic extension we conclude that for all $\lambda\in\C$ and $\epsilon>0$, 
	\begin{displaymath}
	\int_{H_\epsilon} \psi(Z)\det(Z)^\lambda dZ =\int_{\Sym_r(\R)} \psi(X) f_\lambda(X)dX,
	\end{displaymath}
	that is 
	\begin{displaymath}
	\int_{\Sym_r(\R)} \psi(X+\bi \epsilon I_r)\det(X+\bi \epsilon I_r)^\lambda dX =\int_{\Sym_r(\R
		)} \psi(X) f_\lambda(X)dX. 
	\end{displaymath}
	
	As $\epsilon\to 0$, we have  $\psi(X+\bi \epsilon I_r)\to \psi(X)$ in $\mathcal S$, while $\det(X+\bi \epsilon I_r)^\lambda\to\det(X+\bi 0)^\lambda$ in $\mathcal S'$. It follows by continuity that 
	\begin{displaymath}
	\int_{\Sym_r(\R)} \psi(X)\det(X+\bi 0)^\lambda dX = \int_{\Sym_r(\R)} \psi(X) f_\lambda(X)dX.
	\end{displaymath}
	Finally, noting that the set of Schwartz functions such as $\psi$ is dense, we conclude that $\det(X+\bi 0)^\lambda=f_\lambda$ for all $\lambda\in\C$, as claimed.
	\endproof
	Henceforth we use $f_\lambda$ and $\det(X+\bi 0)^\lambda$ interchangeably. 
	
	The following statement shows that the convergence along $Y\in \R_+ I_r$ holds in a finer topology, namely the \emph{normal H\"ormander topology}. We refer to \cite{brouder_dang_helein} for its definition (where it is called \emph{normal topology}). The main point for us is that the operation of pull-back of generalized sections is continuous in this topology, provided some condition on wave fronts is satisfied.  
	
	We do not know if an analogue of the following proposition holds for an arbitrary distributional boundary value; the proof below is tailored to our particular case, and in essence leverages strong convergence by induction on dimension.
	
	\begin{Proposition}\label{prop:hormander_convergence}
		Denote  $N^*\Gamma^r=\cup_{\nu=0}^r N^* \Gamma^r_\nu\subset T^*\Sym_r(\R)$, where $\Gamma^r_\nu$ consists of all matrices of nullity $\nu$.
		It then holds for all $\lambda\in \mathbb C$ that $\det(X+\mathbf i\epsilon I_r)^\lambda\to f_\lambda(X)$ in $C^{-\infty}_{N^*\Gamma^r}(\Sym_r(\R))$ in the normal H\"ormander topology .
	\end{Proposition}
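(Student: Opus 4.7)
The plan is to strengthen the strong convergence of Proposition \ref{prop:upper_complex_limit} to convergence in the normal H\"ormander topology on $C^{-\infty}_{N^*\Gamma^r}(\Sym_r(\R))$. Since weak convergence is already available and each $\det(X+\bi\epsilon I_r)^\lambda$ is real-analytic (so has empty wave front set, trivially contained in $N^*\Gamma^r$), what remains is (a) the inclusion $WF(f_\lambda)\subset N^*\Gamma^r$ and (b) the uniform decay estimate: for every $(X_0,\xi_0)\notin N^*\Gamma^r$ there exist a cut-off $\chi\in C_c^\infty(\Sym_r(\R))$ near $X_0$ and a closed conic neighborhood $V$ of $\xi_0$ disjoint from $N^*\Gamma^r|_{\Supp(\chi)}$, such that for every $N$,
\begin{displaymath}
\sup_{\xi\in V}(1+|\xi|)^N\bigl|\widehat{\chi\det(\cdot+\bi\epsilon I_r)^\lambda}(\xi) - \widehat{\chi f_\lambda}(\xi)\bigr|\xrightarrow[\epsilon\to 0^+]{}0.
\end{displaymath}

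I would proceed by induction on $r$. The base case $r=1$ is the classical Plemelj--Sokhotski formula $(x+\bi\epsilon)^\lambda\to x_+^\lambda + e^{\bi\pi\lambda}x_-^\lambda$, whose normal H\"ormander convergence with wave front in $N^*\{0\}$ is standard. For the induction step a partition of unity reduces the argument to two cases. On the open stratum $\Sym_r(\R)\setminus\Gamma^r$ the approximants converge in $C^\infty$ locally, so both (a) and (b) are automatic. Near a singular point $X_0\in\Gamma^r_\nu$, after an orthogonal rotation placing $X_0=\diag(\Lambda_0, 0_\nu)$ with $\Lambda_0\in\GL(r-\nu)$, decompose $X=\begin{pmatrix}A&B\\B^T&D\end{pmatrix}$ and use the Schur complement factorization
\begin{displaymath}
\det(X+\bi\epsilon I_r) = \det(A+\bi\epsilon I_{r-\nu})\cdot\det\bigl(Y_0(X)+\bi\epsilon\,T(X,\epsilon)\bigr),
\end{displaymath}
where $Y_0:=D-B^TA^{-1}B$ is a smooth local coordinate transverse to the rank-$(r-\nu)$ stratum and $T(X,\epsilon)$ is smooth in $(X,\epsilon)$ with $T(X_0,0)=I_\nu$, hence positive-definite near $X_0$. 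The first factor is a smooth, non-vanishing multiplier in $(X,\epsilon)$, and raising it to the $\lambda$-th power preserves smoothness.

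After the change of variables $(A,B,D)\mapsto(A,B,Y_0)$, the problem reduces to proving normal H\"ormander convergence of $\det(Y_0+\bi\epsilon T)^\lambda\to f_\lambda^{(\nu)}(Y_0)$ in the $Y_0$-variable, where the direction $T=T(A,B,\epsilon)$ varies smoothly with the transverse parameters. This is the $r=\nu$ case of the proposition, applied along the closed convex cone $C'\subset\Sym_\nu^+(\R)$ traced out by the range of $T$ over a compact neighborhood of $X_0$. The inductive hypothesis, combined with continuity of pullback by the smooth diffeomorphism $(A,B,D)\mapsto(A,B,Y_0)$ and multiplication by the smooth multiplier $\det(A+\bi\epsilon I_{r-\nu})^\lambda$ in the normal H\"ormander topology, yields the result, since the pullback of $N^*\Gamma^\nu$ in the $Y_0$-variable coincides with $N^*\Gamma^r_\nu$ near $X_0$. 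The principal obstacle is verifying the uniformity of the inductive step as $T$ varies over $C'$, namely that the seminorms in the normal H\"ormander topology remain uniformly bounded in the direction parameter. This reduces to revisiting the polynomial estimate \eqref{eq:det_bound}, whose constants were already obtained uniformly over closed cones $C'\subset\Sym_r^+(\R)$, and propagating this uniformity through the Fourier-side estimate controlling the wave front.
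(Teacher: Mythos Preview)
Your overall strategy matches the paper's: induction on $r$, local factorization near a point of nullity $\nu$, and continuity of pullback/multiplication in the normal H\"ormander topology. The divergence is in the choice of factorization, and this is where your argument develops a gap.

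The paper does not use the Schur complement. Instead, near a matrix $Y$ of nullity $\nu$ it constructs the smoothly varying \emph{invariant subspace} $E(X)$ (the perturbation of $\ker Y$) and its orthogonal complement $F(X)=E(X)^\perp$. In an orthonormal frame adapted to $E(X)\oplus F(X)$ the matrix $X$ is block diagonal, and therefore so is $X+\bi\epsilon I_r$. This yields the clean factorization
\[
\det(X+\bi\epsilon I_r)^\lambda = A^*\det(X_1+\bi\epsilon I_\nu)^\lambda \cdot \det(B(X)+\bi\epsilon I_{r-\nu})^\lambda,
\]
where $A:U\to\Sym_\nu(\R)$ is a submersion and both $\epsilon$-perturbations remain the identity. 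The inductive hypothesis then applies verbatim to the first factor, and the second is smooth in $(X,\epsilon)$.

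Your Schur factorization produces instead $\det(Y_0+\bi\epsilon\,T(X,\epsilon))^\lambda$ with
\[
T(X,\epsilon)=I_\nu+B^TA^{-1}(A+\bi\epsilon I_{r-\nu})^{-1}B.
\]
Two problems arise. First, for $\epsilon>0$ this $T$ is genuinely complex, so it does not stay in any cone $C'\subset\Sym_\nu^+(\R)$ as you assert. Second, and more fundamentally, even after absorbing the imaginary part as an $O(\epsilon^2)$ correction, the inductive hypothesis you are proving concerns only the ray $\epsilon I_\nu$; it says nothing about uniformity of the H\"ormander seminorms as the direction varies with the transverse parameters $(A,B)$. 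Your appeal to the estimate \eqref{eq:det_bound} does not close this, because that bound only controls the tempered growth needed for strong $\mathcal S'$ convergence, not the microlocal decay in bad directions that the normal topology requires. To make your route work you would need to strengthen the inductive statement to cover all of $\mathfrak h_\nu$ with uniformity in compact families of directions, which is substantially more than what is being proved. The paper's invariant-subspace decomposition sidesteps the issue entirely and is the missing idea here.
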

	
	\proof
	First note that $g^*f_\lambda=\det(g)^{2\lambda}f_\lambda$ for all $g\in\GL(r)$. Thus we have the differential equations $(\underline A-2\lambda\tr(A))f_\lambda=0$, where $\underline A$ is the vector field defined by the infinitesimal action of $A\in\mathfrak{gl}(r)$. It follows from \cite[Theorem 8.3.1]{hoermander_pde1} that $\WF(f_\lambda)\subset N^*\Gamma^r$. 
	
	We proceed by induction on $r$, the case $r=1$ being trivial. Since $\det(X+\mathbf i\epsilon I_r)^\lambda\to \det (X+\mathbf i0)^\lambda$ in the strong topology by Proposition \ref{prop:upper_complex_limit} and $N^*_0\Gamma^r_r=T_0^*\Sym_r(\R)$, it remains to consider convergence in $\Sym_r(\R)\setminus\{0\}$.
	Consider a matrix $Y\in\Sym_r(\R)$ of nullity $\nu<r$. Let $E_0$ be its kernel, and $F_0=E_0^\perp$. There is then a unique  map $E:U\to \Gr_\nu(\R^r)$ in a neighborhood $U$ of $Y$ such that $E(Y)=E_0$, and $E(X)$ is an invariant subspace of $X$. Here and in the following, $U$ is assumed sufficiently small for various purposes.
	
	We claim $E=E(X)$ is smooth. Indeed, consider $Z=\{(X,F): X(F)= F\}\subset\Sym_r(\R)\times\Gr_{r-\nu}(\R^r)$. Clearly $Z$ is the graph of a unique function $F=F(X)$ near $(Y, F_0)$. Let us check that $Z$ is a manifold near $(Y, F_0)$. 
	Define $\alpha:U\times\Gr_{r-\nu}(\R^r)\to\Gr_{r-\nu}(\R^r)\times\Gr_{r-\nu}(\R^r)$ by $\alpha(X, F)=(F, X(F))$. Then $Z=\alpha^{-1}( \Delta)$, where $\Delta$ is the diagonal. Let us verify that $\alpha$ is a submersion at $(Y, F_0)$. 
	
	For $M\in\Sym_r(\R)$ and $H\in T_{F_0}\Gr_{r-\nu}(\R^r)=\mathrm{Hom}(F_0,\R^r/F_0)$, one computes $d_{Y,F_0}\alpha(M, H)=(H, Y\circ H+M|_{F_0\to \R^r/F_0})=(H, M|_{F_0\to \R^r/F_0})$, since by construction $Y: \R^r/F_0\to \R^r/F_0$ is the zero map. Noting that any linear map $F_0\to  \R^r/F_0$ is induced by a symmetric matrix mapping $M:\R^r \to \R^r$, it follows that $\alpha$ is submersive and $Z$ is a manifold. Further, 
	\begin{displaymath}
        T_{Y,F_0}Z=\{(M,H):M\in\Sym_r(\R), H=M|_{F_0\to \R^r/F_0} \}. 
	\end{displaymath}

	In particular if $(0, H)\in T_{Y,F_0}Z$, then we must have $H=0$. It follows that $F(X)$ is smooth in $U$, and therefore so is $E(X)=F(X)^\perp$.
	
	Choose arbitrary orthonormal frames $e_i(X)$ for $E(X)$ and $f_i(X)$ for $F(X)=E(X)^\perp$ depending smoothly on $X$.
	Define 
	\begin{displaymath}
            A:U\to\Sym_\nu(\R),\quad B:U\to \Sym_{r-\nu}(\R)
	\end{displaymath}
    by $$A(X)=(\langle Xe_i(X), e_j(X)\rangle ),\quad B(X)=(\langle X f_i(X), f_j(X)\rangle ).$$ Then $A$ is a submersion in $U$. Indeed one has
	\begin{displaymath}
        d_YA(M)_{i,j}=\langle M e_i(Y), e_j(Y)\rangle+\langle Ye_i(Y), d_Ye_j(M)\rangle +\langle Ye_j(Y), d_Ye_i(M)\rangle, 
	\end{displaymath}
	and the last two summands vanish as $e_i(Y), e_j(Y)\in E_0$. It follows that $d_YA:\Sym_r(\R)\to \Sym_\nu(\R)$ is surjective, and so $A$ is submersive near $Y$.
	
	It holds that
	\begin{displaymath}
        \det(X+\mathbf i\epsilon I_r)^\lambda=A^*\det(X_1+\mathbf i\epsilon I_\nu) ^\lambda \det(B(X)+\mathbf i\epsilon I_{r-\nu})^\lambda, \quad  X_1 \in \Sym_{\nu}(\R).
	\end{displaymath}
	As $ B(X)$ is non-degenerate, the second factor is a smooth function in $(X,\epsilon)\in U\times\R$.
	
	For the first factor, we have by the induction assumption that $\det(X_1+\mathbf i\epsilon I_\nu) ^\lambda\to \det(X_1+\mathbf i0)^\lambda$ in the normal topology on $C^{-\infty}_{N^*\Gamma^\nu}(\Sym_\nu(\R))$. It then holds that \begin{displaymath}\WF(A^*\det(X_1+\mathbf i0)^\lambda)\subset A^*(N^*\Gamma^\nu) =N^*(A^{-1}\Gamma^\nu)= N^*(\Gamma^r\cap U),\end{displaymath}
	and by \cite{brouder_dang_helein}, $A^*\det(X_1+\mathbf i\epsilon I_\nu)^\lambda\to A^*\det (X_1+\mathbf i0)^\lambda$ in the normal H\"ormander topology on $C^{-\infty}_{N^*\Gamma^r}(\Sym_r(\R))$. We conclude that
	\begin{displaymath}\det(X+\mathbf i\epsilon I_r)^\lambda\to \det(X+\mathbf i0)^\lambda \end{displaymath}
	in the normal H\"ormander topology on $C^{-\infty}_{N^*\Gamma^\nu}(\Sym_r(\R))$.
	\endproof
	
	\begin{Remark}
		Using the Hilbert-Schmidt inner product to identify $T_0^*\Sym_r(\R)=\Sym_r(\R)$, the statement of the proposition in fact holds with all conormal cones intersected with $\Sym_r^+(\R)$, which follows from \cite[Theorem 8.1.6]{hoermander_pde1}. 
	\end{Remark}
	
	\subsection{Construction of an $\OO(p,q)$-invariant Crofton distribution}
	In \cite[Proposition 4.9]{faifman_crofton}, an $\OO(p,q)$-invariant distribution was constructed on $\Gr_{n+1-k}(V)$. To avoid singularities, it made use of several auxiliary Euclidean structures that gave rise to several locally defined distributions that were then patched together. For the present paper, we will need an alternative construction making use of a single Euclidean structure. To handle the singularities, we must carefully monitor the wave front set.  

Write $P=P_0$ for the Euclidean structure on $V$. Let $dE=d\sigma_{P}$ denote the $\OO(P)$-invariant probability measure on $\Gr_{n+1-k}(V)$.  Decompose $V=V_P^+\oplus V_P^-$ such that $Q|_{V_P^\pm}=\pm P|_{V_P^\pm}$. 

We denote $\kappa=n+1-k$. An orthonormal basis  $u_1,\dots, u_\kappa$ spanning $E\in \Gr_{\kappa}(V)$ will be called \emph{adapted} if, denoting $s=\dim E\cap V_P^+$, $t=\dim E\cap V_P^-$, the vectors $u_{\kappa-s-t+1},\dots, u_{\kappa-t}$ form a basis of $E\cap V_P^+$,	while $u_{\kappa-t+1},\dots, u_{\kappa}$ form a basis of $E\cap V_P^-$. 
A frame $u_i(E')$, $i=1,\dots, \kappa$ given near $E$ and adapted at $E$ is \emph{well-adapted} in a neighborhood $W$ if, whenever $E'\in W$ is such that $E'\cap E$ is spanned by the subset $(u_{i}(E))_{i\neq j}$ for some $1\leq j\leq \kappa$, then $u_{i}(E')=u_{i}(E)$ for all $i\neq j$. 

It is easy to see that a well-adapted frame can always be chosen to extend a given adapted orthonormal basis $u_i(E)$ of $E$ to a small neighborhood - define the frame $u_i(E')$ by orthogonally projecting $u_i(E)$ to $E'$, and then applying the Gram-Schmidt process.

For $X\in\Sym_\kappa(\R)$ and $\mu\in\R$, denote 
\begin{displaymath}
	E_\mu(X)=\{v\in\R^\kappa: Xv=\mu v\},\qquad \mathrm{mult}(\mu, X)=\dim E_\mu(X).
\end{displaymath}
Define for $a\geq 0$ 
\begin{align*}
	B^a_\mu=\{X: \mathrm{mult}(\mu, X)=a\}\subset\Sym_\kappa(\R).
\end{align*}
We will make use of the Hilbert-Schmidt Euclidean structure $\langle X,Y\rangle=\tr(XY)$ to identify $T_X\Sym_\kappa(\R)=T_X^*\Sym_\kappa(\R)=\Sym_\kappa(\R)$.
\begin{Lemma}\label{lem:symmetric_submanifolds}
	 $B^a_\mu\subset \Sym_\kappa(\R)$ is a locally closed submanifold. It holds that $N^*_XB_\mu^a=\{\Xi\in\Sym_\kappa(\R): X\Xi =\mu \Xi\}=\{\Xi\in\Sym_\kappa(\R): \Xi X=\mu \Xi\}=\Span\{vv^T: v\in E_\mu(X)\}$, and $\mathrm{codim} B_\mu^a=\binom{a+1}{2}$.
	\end{Lemma}

\proof
$B^a_0$ locally coincides with an orbit of the action of $\GL(\kappa)$ on $\Sym_\kappa(\R)$ by $(g, X)\mapsto g^TXg$, and $B^a_\mu=\mu I+B^a_0$.
Now $B_\mu^a$ fibers over $\Gr_a(\R^\kappa)$ with fiber $\Sym_{\kappa-a}(\R)$. Consequently, 
$$ \dim B_\mu^a=a(\kappa-a)+{\kappa-a+1\choose 2}$$ and one computes that $\mathrm{codim} B_\mu^a=\binom{a+1}{2}$. 

Let us describe the set $N^*_XB_\mu^a$. 
As $T_XB_0^a=\{A^TX+XA: A\in\mathfrak{gl}_\kappa(\R)\}$, we have $\Xi\in N^*_XB_0^a\iff \tr(\Xi A^T X+\Xi X A)=0$ for all $A$, or equivalently $N^*_XB_0^a=\{\Xi\in\Sym_\kappa(\R): \Xi X=0\}$. It follows that $$N^*_XB_\mu^a= N^*_{X-\mu I}B_0^a=\{\Xi: \Xi X=\mu \Xi\},$$ and the second form follows by transposition. Finally, $\Xi=uv^T+vu^T$ is easily checked to satisfy $\Xi X=\mu \Xi$ when $u,v\in E_\mu(X)$. By a simple dimension count we conclude that $N^*_XB_\mu^a=\Span\{uv^T+vu^T: u,v\in E_\mu(X)\}$, which  coincides with $\Span\{vv^T: v\in E_\mu(X)\}$ as $uv^T+vu^T=(u+v)(u+v)^T-uu^T-vv^T$.
\endproof

\begin{Lemma}\label{lem:nearby_orbits}
	For any $Y\in\Sym_\kappa(\R)$ with $Y\in B_0^r$ and for every $\epsilon>0$, there is a neighborhood $W_Y$ of $Y$ such that for all $X\in W_Y$, if $X\in B^{r'}_0$ then $T_{X}B^{r'}_0$ contains a subspace that is $\epsilon$-close to $T_Y B^r_0$.
\end{Lemma}

\proof Recall that by Lemma \ref{lem:symmetric_submanifolds}, $T_XB^{r'}_0=\{\Xi\in\Sym_\kappa(\R): X\Xi= 0\}^\perp$. The statement now follows from the following general fact.

\textit{Claim.} Let $M_0\in\mathrm{Mat}_{n\times n}(\R)$ be a matrix. Then for any $\epsilon>0$ there is a neighborhood $W_\epsilon$ of $M_0$ such that for any $M\in W_\epsilon$, $\Ker(M)^\perp$ contains a subspace that is $\epsilon$-close to $\Ker(M_0)^\perp$.

\textit{Proof.} Assume that $\mathrm{rank}M_0=r$, and the first $r$ rows $u_1(M_0)^T,\dots, u_r(M_0)^T$ are linearly independent. Therefore, $\Ker(M_0)^\perp=\Span(u_1(M_0),\dots, u_r(M_0))$. By choosing $W_\epsilon$ small enough, we may ensure that $\Span(u_1(M),\dots, u_r(M))$ is $r$-dimensional, and $\epsilon$-close to $\Ker(M_0)^\perp$. Since $\Span(u_1(M),\dots, u_r(M))\subset \Ker(M)^\perp$, this concludes the proof.
\endproof

\begin{Lemma}\label{lem:eigenspace_tangent}
Let $M_\tau$ be a smooth curve in $\Sym_\kappa(\R)$ such that the spectrum of $M_\tau$ lies in $[-1,1]$ for all $\tau$.
If $\epsilon \in \{-1,1\}$ and $\mathrm{mult}(\epsilon, M_0)=s$, then $\left.\frac{d}{d\tau}\right|_0M_\tau\in T_{M_0}B^{s}_\epsilon$.
\end{Lemma}

\proof 
Write $\dot M_0=\left.\frac{d}{d\tau}\right|_0M_\tau$. By Lemma \ref{lem:symmetric_submanifolds}, we ought to show that for all unit vectors $v\in E_\epsilon(M_0)$, $\langle \dot M_0,  vv^T\rangle=0$. Now for any such $v$, $\langle M_0 v, v\rangle =\epsilon$. We know by assumption $|\langle M_\tau v, v\rangle|\leq 1$ for all $\tau$, and so $$\langle \dot M_0, vv^T\rangle=\langle \dot M_0 v, v\rangle =\left.\frac{d}{d\tau}\right|_0\langle M_\tau v, v\rangle=0.$$
\endproof

\begin{Proposition}\label{prop:m0_explicit}
	Let $X_0$ be locally defined by a well-adapted frame at $E\in\Gr_{n-k+1}(V)$. Then there is a neighborhood  $W_{E}$ of $E$ where the distribution $$\widetilde m^0_k(E'):= X_0^*f_{-\frac{n+1}{2}}(E')\cdot dE'$$ is well-defined.
	Furthermore, the corresponding distributions agree on non-empty intersections $W_{E_1}\cap W_{E_2}$ for all $E_1, E_2$, giving rise to a globally defined distribution $\widetilde m^0_k\in\mathcal M^{-\infty}(\Gr_{n+1-k}(V))$. Moreover,  $\widetilde m^0_k$ is $\OO(Q)$-invariant. 
\end{Proposition}
\proof

Fix $E$, and a well-adapted to $E$ frame $u_j(E')$ defined in a neighborhood $W'_E$. Set $Y=X_0(E)$, $s=\dim E\cap V_P^+=\mathrm{mult}(1, Y)$, $t=\dim E\cap V_P^-=\mathrm{mult}(-1, Y)$, $r=\mathrm{mult}(0, Y)$.

\textit{Claim.} It holds that $\Image(d_E X_0)+T_YB_{0}^{r}=T_Y\Sym_\kappa(\R)$. 
 
 Let us prove the claim. We may assume that $Y$ lies in the singular support of $f_{\lambda}$, that is $r\geq 1$. Thus $Y\in B^r_0\cap B^s_1\cap B^t_{-1}$.

	The intersection $B^{s,t}_{1,-1}:=B^s_1\cap B^t_{-1}$ is transversal. To this end simply observe that by Lemma \ref{lem:symmetric_submanifolds}, $N_Y^*B_1^s\cap N_Y^*B_{-1}^t=\{\Xi: \Xi Y=\Xi =-\Xi \}=\{0\}$, so $B^s_1\pitchfork B^t_{-1}$, and $B^{s,t}_{1,-1}$ is a submanifold. It holds that
	\begin{displaymath}
		\dim B^{s,t}_{1,-1}={\kappa+1\choose 2}-{s+1\choose 2}-{t+1\choose 2}.
	\end{displaymath}

	Similarly, the intersection $B^r_0\cap B^{s,t}_{1,-1}$ is transversal. Indeed, $N^*_YB^{s,t}_{1,-1}=\{ \Xi_1+\Xi_2: \Xi_1Y=\Xi_1, \Xi_2Y=-\Xi_2\}$ and 
	\begin{displaymath}
		\mathrm{codim}T_YB^{s,t}_{1,-1}=\mathrm{codim}T_Y B_1^s+\mathrm{codim}T_Y B_{-1}^t={s+1\choose 2}+{t+1\choose 2}.
	\end{displaymath}   
If $\Xi= \Xi_1+\Xi_2\in N^*_YB^{s,t}_{1,-1}\cap N_Y^*B_0^r$, then $\Xi_1-\Xi_2=\Xi_1Y+ \Xi_2Y=\Xi Y=0$, so that $ \Xi_1= \Xi_2$, which can only happen if $\Xi_1=\Xi_2=0$ since $\Xi_1Y=\Xi_1$, $\Xi_2Y=-\Xi_2$, therefore $\Xi=0$. Thus $B_0^r \pitchfork B_{1,-1}^{s,t}$ as claimed.
	
	Set $E_Y:=E_1(Y)\oplus E_{-1}(Y)$, and define 
	\begin{displaymath}
		W_Y=\{X\in B^{s,t}_{1,-1}:E_\mu(X)=E_\mu(Y)\text{, } \forall\mu\neq \pm 1\}.
	\end{displaymath}

As $X\in W_Y$ is uniquely determined by its eigenspace $E_X(1)$, $W_Y$ is evidently a manifold that can be identified with $\Gr_{s}(E_Y)=\Gr_s(\R^{s+t})$, in particular $\dim W_Y=st$.
By definition, $W_Y\subset B^r_0$. 

Now since $-P\leq Q\leq P$, the spectrum of $ X_0(E)$ lies in $[-1,1]$. By Lemma \ref{lem:eigenspace_tangent} we have $\mathrm{Image}(d_E X_0)\subset T_YB^{s,t}_{1,-1}$.

For $1\leq j\leq \kappa$, choose a smooth curve $\gamma_j(\tau)$ through $E$ given  by 
\begin{displaymath}
	\gamma_j(\tau)=\Span(u_1(E),u_2(E),\dots,\cos\tau u_j(E)+\sin\tau \xi,\ldots,u_\kappa(E)),
\end{displaymath}
where $\xi\in E^P$ is arbitrary. Observe that $T_E\Gr_{\kappa}(V)=\Span\{\gamma_j'(0):1\leq j\leq \kappa\}$. Since the frame is well-adapted to $E$, $u_i(\gamma_j(\tau))=u_i(E)$ for $i\neq j$, and so $u_j(\gamma_j(\tau))=\cos\tau u_j(E)+\sin\tau \xi$. One computes
\begin{displaymath}
	d_E X_0 (\gamma_j'(0))=
	\begin{pmatrix}
		0&\cdots &0 &Q(\xi, u_1)&0&\cdots&0 \\
		\vdots &\ddots&\vdots &\vdots&\vdots&\ddots &\vdots \\
		0&\cdots & 0& Q(\xi, u_{j-1}) &0&\cdots&0\\
		Q(\xi, u_1)&\cdots &Q(\xi, u_{j-1})& Q(\xi, 2u_j) & Q(\xi, u_{j+1}) & \cdots & Q(\xi,u_\kappa) \\
		0&\cdots & 0& Q(\xi, u_{j+1})&0 & \cdots&0\\
		\vdots &\ddots&\vdots &\vdots&\vdots&\ddots&\vdots \\
		0&\cdots &0 &Q(\xi, u_\kappa)&0&\cdots&0 
	\end{pmatrix}
\end{displaymath}

Note that $E\cap(E^P)^Q=(E\cap V_P^+) \oplus (E\cap V_P^-)$. Hence $Q(\xi, u_1), Q(\xi,u_2),\ldots, Q(\xi,u_{\kappa-s-t})\in (E^P)^*$ are linearly independent functionals, while the bottom right $(s+t)\times (s+t)$ minor of $d_E  X_0 (\gamma_j'(0))$ vanishes. 

Therefore for $1\leq j\leq \kappa-s-t$, we may choose $\xi_j\in E^P$ such that $Q(\xi_j, u_i)=0$ for $1\leq i\leq j-1$, while $Q(\xi_j, u_i)$ is arbitrary for $j\leq i\leq \kappa-s-t$. For $\kappa-s-t+1\leq j\leq \kappa$, we may choose $\xi_j\in E^P$ to get arbitrary $\kappa-s-t$ first entries in the $j$-th row and column. Thus the entries of a matrix in $\Image(d_E X_0)$ can be made arbitrary outside of the bottom right $(s+t)\times (s+t)$ minor. Consequently, $\mathrm{codim}(\mathrm{Image}(d_E X_0))= {s+t+1\choose 2}$.

We claim that $\mathrm{Image}(d_E X_0)\cap T_YW_Y=\{0\}$. This is because $T_YW_Y$ consists of all matrices that vanish outside of the bottom right $(s+t)\times (s+t)$-minor $M$, which has zeros in its top left $s\times s$ minor and bottom right $t\times t$ minor.

One easily verifies that ${s+1\choose 2}+{t+1\choose 2}+st={s+t+1\choose 2}$, so that $$\dim \mathrm{Image}(d_E X_0)+\dim T_{Y}W_Y\geq\dim T_Y B^{s,t}_{1,-1}.$$ Hence  
\begin{displaymath}
	\mathrm{Image}(d_E X_0)\oplus T_YW_Y=T_Y B^{s,t}_{1,-1}. 
\end{displaymath}
Since $T_YW_Y\subset T_YB^r_0$ and $T_YB^r_0 + T_Y B^{s,t}_{1,-1}=T_Y\Sym_\kappa(\R)$, we conclude that $ \mathrm{Image}(d_E X_0)+T_YB^r_0=T_Y\Sym_\kappa(\R)$ as claimed.

Fix $\epsilon>0$. For $E'$ in a sufficiently small neighborhood $W_E$ of $E$, $\Image d_{E'} X_0$ must contain a subspace that is $\epsilon$-close to $\mathrm{Image}(d_E X_0)$.  We may moreover  by Lemma \ref{lem:nearby_orbits} assume $W_E$ is such that for all $E'\in W_E$, $Y'= X_0(E')$ has nullity $r'\leq r$, and $T_{Y'}B_0^{r'}$ contains a subspace that is $\epsilon$-close to $T_YB_0^r$. Thus for sufficiently small $\epsilon$ we find a neighborhood $W_E$ such that for all $E'\in W_E$ with $Y'= X_0(E')$ of nullity $r'$, 
\begin{displaymath}
	\Image(d_{E'} X_0)+T_{ X_0(E')}B^{r'}_0=\Sym_\kappa(\R). 
\end{displaymath}

Define 
\begin{displaymath}
	L_{E',Y'}=\mathrm{Ker}\left(d X_0^*:T^*_{Y'}\Sym_\kappa(\R)\to T_{E'}^*\Gr_{\kappa}(V)\right).
\end{displaymath}
Thus $N_Y^*B_0^{r'}\cap  L_{E',Y'}=\{0\}$ for $E'\in W_E$.

By Proposition \ref{prop:hormander_convergence}, $\WF_Y(f_\lambda)\subset N_{Y'}^*B_0^{r'}$.  We conclude that $\WF(f_{\lambda})\cap L_{E',Y'}=\emptyset$. 

By \cite[Proposition 1.3.3]{duistermaat_book96} $ X_0^*$ defines a sequentially continuous linear operator on $C^{-\infty}_{N^*\Gamma^\kappa}(\Sym_\kappa(\R))$, where $f_\lambda$ lies. Moreover, $  X_0^*f_\lambda$ must itself be an analytic family: for a smooth compactly supported test measure $\psi$ on $W_{E}$ we have $\langle  X_0^*f_\lambda, \psi\rangle =\langle f_\lambda, ( X_0)_*\psi\rangle$, and by \cite[Chapter VI, Proposition 3.9]{guillemin_sternberg} we have $\WF(( X_0)_*\psi)\subset \cup_{E'\in W_E} L_{E',  X_0(E')}$. Now analyticity of a vector-valued function coincides with weak analyticity in quasi-complete locally convex vector spaces, and $C^{-\infty}_{N^*\Gamma^\kappa}(\Sym_\kappa(\R))$ is quasi-complete \cite[Proposition 29]{brouder_dabrowski}. Furthermore $(X_0)_*\psi$ defines a continuous linear functional on $C^{-\infty}_{N^*\Gamma^\kappa}(\Sym_\kappa(\R))$ by \cite[Lemma 3]{brouder_dabrowski}, confirming that the family of generalized functions $ X_0^*f_\lambda\in C^{-\infty}(W_E)$ is analytic.

Now observe that the continuous functions $ X_0^*f_{\lambda}$ defined separately for $W_{E_1}$ and $W_{E_2}$ coincide on non-empty intersections $W_{E_1}\cap W_{E_2}$ for $\mathrm{Re}\lambda>0$ by eq. \eqref{eq:frame_change}. If follows by uniqueness of the  analytic extension that this holds for all $\lambda\in\C$. In particular, $\widetilde m^0_k$ is a globally well-defined distribution.

For invariance, we note that for $g\in\OO(Q)$, $g^*X_0^*f_\lambda=\psi_g(E)^\lambda X_0^*f_\lambda$ for  $\lambda>0$, and consequently by uniqueness of analytic extension for all $\lambda\in\C$. Here $\psi_g(E)=\Jac(g:E\to gE)^{-2}$, which for $g\in\OO(Q)$ satisfies $\psi_g(E)=\frac{\det X_0(gE)}{\det X_0(E)}$, see \cite[Proposition 4.7]{bernig_faifman_opq} for details. Using the identification $$\Dens(T_E\Gr_{n+1-k}(V))=\Dens(E^*\otimes V/E)=\Dens^*(E)^{\otimes (n+1)}\otimes\Dens(V)^{\otimes n+1-k},$$
it follows that $X_0^*f_\lambda dE$ is an $\OO(Q)$-invariant distribution on $\Gr_{n+1-k}(V)$ when $\lambda=-\frac{n+1}{2}$.
\endproof

Henceforth whenever $ X_0$ appears, a local well-adapted frame should be chosen arbitrarily unless an explicit choice is provided.

\subsection{Some properties of the invariant distributions}

We will use the following rescaling of the invariant distribution constructed above, which brings the total integral to $1$ as will be later seen.

	\begin{Definition}\label{def_tilde_m} Set
	\begin{displaymath}
		m_k:= e^{\frac{\bi\pi}{2} (n+1-k)q} \widetilde m_k^0 \in\mathcal M^{-\infty}(\Gr_{n+1-k}(V))^{\OO(Q)}.
	\end{displaymath}
\end{Definition}
	
	\begin{Lemma}\label{lem:crofton_sign}
		Let $j\colon \R^{p,q}\to\R^{q,p}$ be given by $j(x,y)=(y,x)$ where $x\in \R^p,y\in\R^q$. Let us also denote by $j$ the induced map $\Gr_{p+q-k}(\R^{p,q})\to \Gr_{p+q-k}(\R^{q,p})$. Then
		\begin{displaymath}
		j^*m_k=\overline{m_k}.
		\end{displaymath}
	\end{Lemma}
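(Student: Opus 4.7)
The plan is to trace through the construction of $\widetilde m_k^0$ under the signature-swap $j$, extract a conjugation relation from the functional equation of $f_\lambda$, and then verify that the phase factors in Definition~\ref{def_tilde_m} are designed to absorb the resulting discrepancy. The starting observation is that on the underlying vector space $V=\R^{p+q}$, we have $j^*Q^{(q,p)}=-Q^{(p,q)}$ while $j^*P_0=P_0$. Hence, after choosing a $P_0$-orthonormal frame $\{u_\nu, v^+_i, v^-_i\}$ on $E\in\Gr_{n+1-k}(\R^{p,q})$, the frame $\{ju_\nu, jv^-_i, jv^+_i\}$ is $P_0$-orthonormal on $jE\in\Gr_{n+1-k}(\R^{q,p})$, where the roles of the $p'$ and $q'$ eigen-directions swap (consistent with $\kappa$ being symmetric in $(p,q)$). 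Reading off the Gram matrix of the indefinite block gives
\begin{displaymath}
\widehat X_0^{(q,p)}\circ j = -\widehat X_0^{(p,q)}.
\end{displaymath}

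Next, since the measure $dE$ is the $\OO(P_0)$-invariant probability measure and $P_0$ is preserved by $j$, one has $j^*dE^{(q,p)}=dE^{(p,q)}$. Plugging in and applying Lemma~\ref{lem_f_lambda} with $r=\kappa$, $\lambda=-\frac{n+1}{2}$, the reflection identity $f_\lambda(-X)=e^{\bi\pi r\lambda}\overline{f_\lambda(X)}$ yields
\begin{displaymath}
j^*\widetilde m_k^{0,(q,p)} = (\widehat X_0^{(p,q)})^*\bigl(f_\lambda(-\,\cdot\,)\bigr)\cdot dE
= e^{-\bi\pi\kappa(n+1)/2}\,\overline{\widetilde m_k^{0,(p,q)}}.
\end{displaymath}

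The remainder reduces to checking that the phase prefactor introduced in Definition~\ref{def_tilde_m} exactly cancels this discrepancy, i.e.\ that
\begin{displaymath}
\tfrac{\pi}{2}\bigl[(n+1)\min(k,p)-pk\bigr] - \tfrac{\pi\kappa(n+1)}{2}
\equiv -\tfrac{\pi}{2}\bigl[(n+1)\min(k,q)-qk\bigr] \pmod{2\pi}.
\end{displaymath}
Using $n+1=p+q$, this condenses to the assertion $\min(k,p)+\min(k,q)-\kappa-k=0$. The main (and essentially only) obstacle is this combinatorial identity, which I would verify by a short case analysis on the position of $k$ relative to $p,q,n+1-k$: if $k\leq\min(p,q)$ then $k\leq\frac{n+1}{2}$ forces $\kappa=k$ and the identity becomes $k+k-k-k=0$; if $p<k\leq q$ then $\kappa=p$ and the identity becomes $p+k-p-k=0$; the case $q<k\leq p$ is symmetric; finally if $k>\max(p,q)$ then $\kappa=n+1-k$ and the identity becomes $p+q-(n+1-k)-k=0$. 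In each case the identity holds, so the required phases match and the lemma follows.
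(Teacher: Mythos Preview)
Your proof is correct and follows essentially the same approach as the paper: both identify $\widehat X_0^{(q,p)}\circ j=-\widehat X_0^{(p,q)}$, apply the reflection identity \eqref{eq_reflection_f_lambda} to pull out the factor $e^{-\bi\pi\kappa(n+1)/2}$, and then match phases. The only difference is cosmetic: you verify the identity $\min(k,p)+\min(k,q)=\kappa+k$ by a four-case analysis, whereas the paper derives it in one line from \eqref{eq_equation_kappa} (since $\min(k,p)=p-p'$ and $\min(k,q)=q-q'$, one has $\min(k,p)+\min(k,q)=(n+1)-p'-q'=(n+1)-(n+1-k-\kappa)=k+\kappa$).
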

	
	\begin{proof}
		We have $m_k=\mathbf i^{(n+1-k)p}{X_0^*f_{-\frac{n+1}2}}d\sigma_{P_0}$  on  $\R^{q,p}$.  Since $ X_{ 0} \circ j=- X_{ 0}$ (where $X_0$ is defined using $j$-corresponding frames), \eqref{eq_reflection_f_lambda} implies that $j^*X_{ 0}^*f_{-\frac{n+1}2}=\mathbf i^{-(n+1-k)(n+1)}\overline{ X_{ 0}^*f_{-\frac{n+1}2}}$. It follows that
		\begin{align*}
		j^*m_k&=\mathbf i^{(n+1-k)p-(n+1-k)(n+1)} \overline{ X_{ 0}^*f_{-\frac{n+1}2}}d\sigma_{P_0}\\
		&=\mathbf i^{-(n+1-k) q}\overline{ X_{ 0}^*f_{-\frac{n+1}2}}d\sigma_{P_0},
		\end{align*}
		which is the conjugate of $m_k$ in $\R^{p,q}$.
		\end{proof}
		
	\begin{Proposition}\label{prop:boundary_value_grassmannian} Define \begin{displaymath}
		N^*\Lambda:=\cup_{\nu\geq1}N^*\Lambda^\nu_{n+1-k}(V).
		\end{displaymath} 
		\begin{enumerate}
			\item The wave front set of $m_k$ is contained in $N^*\Lambda$.
			
			\item $m^{\mathbf i\epsilon}_k\to m_k$ in $\mathcal M_{N^*\Lambda}^{-\infty}(\Gr_{n+1-k}(V))$ as $\epsilon\to 0^+$ in the normal H\"ormander topology.
		\end{enumerate}	 
	\end{Proposition}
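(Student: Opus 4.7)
The plan is to transfer both statements from $\Sym_\kappa(\R)$ to $\Gr_{n+1-k}(V)$ via pullback along the locally defined map $\widehat X_0$, leveraging Propositions~\ref{prop:upper_complex_limit} and~\ref{prop:hormander_convergence} together with the transversality estimates already carried out in the proof of Proposition~\ref{prop:m0_explicit}.

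The first preparatory step is to identify $\widehat X_0^{-1}(\Gamma^\kappa_\nu)$ with $\Lambda^\nu_{n+1-k}(V)$ locally. With the $P_0$-orthonormal basis $u_1,\dots,u_\kappa,v_1^+,\dots,v_{p'}^+,v_1^-,\dots,v_{q'}^-$ of $E$ and $v_j^\pm\in V_{P_0}^\pm$, the cross terms satisfy $Q_0(u_i, v_j^\pm)=\pm P_0(u_i, v_j^\pm)=0$ because the basis is $P_0$-orthonormal. Hence the full Gram matrix of $Q_0|_E$ is block-diagonal $\mathrm{diag}(\widehat X_0, I_{p'}, -I_{q'})$, and the nullity of $Q|_E$ equals that of $\widehat X_0(E)$. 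For part (i), the standard pullback estimate for wave front sets \cite{hoermander_pde1} then yields
\begin{displaymath}
\WF(m_k) \subset \widehat X_0^{*}\WF(f_{-(n+1)/2}) \subset \widehat X_0^{*}(N^*\Gamma^\kappa) = N^*\Lambda,
\end{displaymath}
invoking Proposition~\ref{prop:hormander_convergence} for the middle inclusion; the transversality ensuring that the pullback is well-defined is precisely the content of the computation in the proof of Proposition~\ref{prop:m0_explicit}.

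For part (ii) I would write $m_k^{\bi\epsilon}$ explicitly. Since $Q_{\bi\epsilon}=Q_0+2\bi\epsilon P_0$ and the basis is $P_0$-orthonormal, $\widehat X_{\bi\epsilon}^{P_0}=\widehat X_0+2\bi\epsilon I_\kappa$, and combining with the block-diagonal factorization $\det X^{P_0}_{\bi\epsilon}=(2\bi\epsilon+1)^{p'}(2\bi\epsilon-1)^{q'}\det\widehat X^{P_0}_{\bi\epsilon}$ one obtains
\begin{displaymath}
m_k^{\bi\epsilon} = c(\epsilon)\cdot \widehat X_0^{*}\bigl(\det(X + 2\bi\epsilon I_\kappa)^{-(n+1)/2}\bigr)\, d\sigma_{P_0},
\end{displaymath}
with
\begin{displaymath}
c(\epsilon) = (2\bi\epsilon + 1)^{((n+1)\min(p,k) - pk)/2}(2\bi\epsilon - 1)^{((n+1)\min(q,k) - qk)/2}.
\end{displaymath}
The branch of $\sqrt{z}$ on $U_\C$ continues from $\sqrt{1}=1$ to $\sqrt{-1}=\bi$ through the upper half-plane, so $c(\epsilon)\to e^{\bi\pi((n+1)\min(k,q)-qk)/2}$ as $\epsilon\to 0^+$, matching the phase of Definition~\ref{def_tilde_m}. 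Proposition~\ref{prop:hormander_convergence} provides convergence $\det(X+2\bi\epsilon I_\kappa)^{-(n+1)/2}\to f_{-(n+1)/2}$ in the normal H\"ormander topology on $C^{-\infty}_{N^*\Gamma^\kappa}(\Sym_\kappa(\R))$; continuity of pullback under $\widehat X_0^*$ in the normal topology \cite{brouder_dang_helein}, whose hypothesis is again the wave-front transversality of Proposition~\ref{prop:m0_explicit}, transfers this to convergence of the pullback factor in $\mathcal M^{-\infty}_{N^*\Lambda}(\Gr_{n+1-k}(V))$. Multiplying by the convergent scalars yields $m_k^{\bi\epsilon}\to m_k$.

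The main difficulty is not conceptual but computational: confirming that the transversality hypothesis required by Brouder-Dang-H\'elein for continuity of $\widehat X_0^*$ in the normal topology is already supplied by the proof of Proposition~\ref{prop:m0_explicit}, and verifying through a case split on $\min(p,k)$ and $\min(q,k)$ that the scalar limit of $c(\epsilon)$ produces exactly the phase factor appearing in Definition~\ref{def_tilde_m}.
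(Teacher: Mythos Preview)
Your proposal is correct and follows essentially the same route as the paper: both arguments write $m_k^{\bi\epsilon}$ as a scalar multiple of $\widehat X_0^*\det(X+2\bi\epsilon I_\kappa)^{-(n+1)/2}\,d\sigma_{P_0}$, invoke Proposition~\ref{prop:hormander_convergence} for convergence on $\Sym_\kappa(\R)$, transfer this via continuity of $\widehat X_0^*$ in the normal topology \cite{brouder_dang_helein} using the transversality established in the proof of Proposition~\ref{prop:m0_explicit}, and identify $\widehat X_0^*N^*\Gamma^\kappa\subset N^*\Lambda$ from the block-diagonal structure of the Gram matrix. Your treatment is in fact slightly more explicit than the paper's in tracking the scalar prefactor $c(\epsilon)$ and its limiting phase.
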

	\proof
	
	Write $\lambda=-\frac{n+1}{2}$. For $\zeta\in U_{\mathbb C}$ we have $$(2\zeta+1)^{-\frac{n+1-k}{2}p}(2\zeta-1)^{-\frac{n+1-k}{2}q}m^\zeta_k(E)=  \det (X^{P_0}_\zeta)^{\lambda}dE.$$
	We compute, using a well-adapted frame, 
	\begin{displaymath}
	\det(X^{P_0}_{\mathbf i\epsilon}(E))^{\lambda} = \det(X_{0}(E)+2\mathbf i\epsilon I_{n+1-k} )^{\lambda}=X_0^*\det(X+2\mathbf i\epsilon I_{n+1-k})^{\lambda}.
	\end{displaymath}
	
	By Proposition \ref{prop:hormander_convergence}, we have $\det(X+2\mathbf i\epsilon  I_{n+1-k})^{\lambda}\to f_{\lambda}(X)$ in the normal H\"ormander topology on $C^{-\infty}_{\Gamma^{n+1-k}}(\Sym_{n+1-k}(\R))$. By the proof of Proposition \ref{prop:m0_explicit}, we may use the continuity of the pull-back $X_0^*$ in the normal H\"ormander topology \cite{brouder_dang_helein}. Noting that $X_0^{-1}(\Gamma^{n+1-k}\nu)\subset \Lambda_{n+1-k}^\nu(V)$ so that $ X_0^*N^*\Gamma^{n+1-k}\subset N^*\Lambda$, we find that $$  X_0^*\det(X+2\mathbf i\epsilon I_{n+1-k})^{\lambda} dE\to \widetilde m_k^0$$
	in the normal H\"ormander topology as stated. 
	\endproof

	\begin{Corollary}\label{cor:can_compute_on_LC}
		Let $M \subset V^{n+1}$ be a pseudosphere or a pseudohyperbolic space, and $A\subset M$ either a smooth domain with LC-regular boundary, or a smooth LC-regular hypersurface without boundary. Assume all $Q$-degenerate tangents to $A$ of codimension $k$ are regular. Then 
		\begin{equation}\label{eq:crofton_general}\Cr(m_k)(A)=\int_{\Gr_{n+1-k}(V)}\chi(A\cap E)dm_k(E). \end{equation}
	\end{Corollary}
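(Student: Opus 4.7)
The plan is to apply Proposition \ref{prop:apply_crofton_general} with $\mu = m_k$. This requires establishing two conditions: (a) $\Cr\WF^k(A)\cap s^*\WF(m_k)=\emptyset$, and (b) $A\pitchfork \Cr(m_k)$. Once both are in place, Proposition \ref{prop:apply_crofton_general} gives exactly \eqref{eq:crofton_general}.

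For condition (a), I will use Proposition \ref{prop:boundary_value_grassmannian}(1), which gives $\WF(m_k)\subset N^*\Lambda = \bigcup_{\nu\geq 1}N^*\Lambda^\nu_{n+1-k}(V)$; note that this conormal set is automatically invariant under the fiberwise antipodal map $s$. I will check the disjointness pointwise, splitting cases according to $E\in\Gr_{n+1-k}(V)$. If $E$ is not a tangent to $H$, then $\widehat\chi_A$ is locally constant near $E$ by Proposition \ref{prop:yomdin}, so $\Cr\WF^k_E(A)$ is empty. If $E\in\Lambda^0_{n+1-k}(V)$ (non-degenerate), then locally $E$ does not belong to any stratum $\Lambda^\nu$ with $\nu\geq 1$, so $N^*_E\Lambda = \emptyset$. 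Finally, if $E$ is a degenerate tangent of nullity $\nu\geq 1$, the hypothesis of the corollary ensures $E$ is a regular tangent, and Corollary \ref{cor:CrWF_of_LC} yields the required disjointness $\Cr\WF^k_E(A)\cap N^*_E\Lambda^\nu_{n+1-k}(V)=\emptyset$.

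For condition (b), the natural approach is to first prove the wave front bound $\WF(\Cr(m_k)) \subset (\emptyset, N^*\LC^*_M)$ — the same bound satisfied by the intrinsic volumes $\mu_k^M$. Granted such a bound, LC-regularity of the boundary $H$ is equivalent to LC-transversality by \cite[Proposition 4.9]{bernig_faifman_solanes}, and the preliminaries lemma (with $D=\emptyset$, $L = \LC^*_M$) then yields $A\pitchfork\Cr(m_k)$. To obtain the wave front bound on $\Cr(m_k) = (\pi_M)_*\tau_M^*m_k$, I will employ standard wave front calculus on the double fibration $M \xleftarrow{\pi_M} Z_M \xrightarrow{\tau_M} \Gr_{n+1-k}(V)$, combined with Lemma \ref{lem:radon_condition} which guarantees the submersion property needed for the pullback. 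The geometric content is as follows: if $E\in\Lambda^\nu_{n+1-k}(V)$ with $\nu\geq 1$ and $(x,E)\in Z_M$, a null direction $v$ of $Q|_E$ lies in $E\cap E^Q$; its image in $T_xM$ is a $Q$-null tangent vector, whose dual via $Q$ belongs to $\LC^*_M$. This translates conormal directions to $\Lambda^\nu$ in the Grassmannian into conormal directions to $\LC^*_M$ on $M$ after pullback-pushforward.

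The main obstacle lies in the wave front computation of $\Cr(m_k)$ in part (b): one must carefully track the fibered transport of conormal covectors through $\tau_M^*$ and $(\pi_M)_*$, verify that only vertical directions are killed by the pushforward, and handle the singular locus where $\tau_M$ is not a submersion (which occurs precisely above the non-generic intersections that define $B_H$). An alternative route, which may bypass some of this work, is to use the approximation $m^{\mathbf i\epsilon}_k\to m_k$ in the normal H\"ormander topology from Proposition \ref{prop:boundary_value_grassmannian}(2): for each $\epsilon>0$ the measure is smooth and Corollary \ref{coro:smooth_on_grassmannian} gives the Crofton identity for $m_k^{\mathbf i \epsilon}$ on any $A$, and then one passes to the limit using continuity of pairings against distributions with the specified wave front bounds. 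Either way, the core technical point is the interplay between the wave front structure of $m_k$ on the Grassmannian and the light-cone wave front on $M$ propagated by the incidence manifold.
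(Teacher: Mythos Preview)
Your plan for condition (a) matches the paper's argument: it combines Proposition \ref{prop:boundary_value_grassmannian}(i) with Corollary \ref{cor:CrWF_of_LC}, exactly as the paper does.

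The difference lies in condition (b). You propose to bound $\WF(\Cr(m_k))$ by direct microlocal analysis of the push-pull $(\pi_M)_*\tau_M^*$ on the double fibration, or alternatively by a limiting argument from the smooth measures $m_k^{\mathbf i\epsilon}$. The paper instead takes a much shorter route: since $m_k$ is $\OO(Q)$-invariant, $\Cr(m_k)$ is an isometry-invariant generalized valuation on $M$, and by the Hadwiger-type classification \cite[Theorem C]{bernig_faifman_solanes_part2} it is therefore a linear combination of the intrinsic volumes $\mu_j$ and their conjugates. The wave front bound $\WF(\Cr(m_k))\subset(\emptyset,N^*\LC^*_M)$ then comes for free from the known bound on the intrinsic volumes, and LC-regularity of $H$ gives $A\pitchfork\Cr(m_k)$ immediately.

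Your direct approach is not obviously wrong, but it is considerably harder than you indicate. Recall Remark \ref{rem:radon_fail2}: the hypotheses of \cite[Corollary 4.1.7]{alesker_intgeo} that would let you run the standard wave-front calculus for the Radon transform $\mathcal R_M^T$ on generalized valuations \emph{fail} for general $k$, which is precisely why $\Cr$ had to be defined ad hoc in Proposition \ref{prop_extension_crofton_map} rather than as $\mathcal R_M^T$. So the clean push-pull wave front estimate you sketch does not come for free; carrying it out would require reproving a substantial part of the microlocal machinery in this specific setting. The paper's use of the Hadwiger classification sidesteps all of this in one line, at the cost of invoking a nontrivial external result.
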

	\proof
	First note that $\Cr(m_k)\in\mathcal V^{-\infty}(M)$ is isometry invariant and by \cite[Theorem C]{bernig_faifman_solanes_part2} is given by a linear combination of the intrinsic volumes. Thus $A$ is WF-transversal to $\Cr(m_k)$.
	The assertion now follows from Corollary \ref{cor:CrWF_of_LC}, and Propositions \ref{prop:boundary_value_grassmannian} part i) and \ref{prop:apply_crofton_general}.
	\endproof

	\begin{Corollary}\label{cor:can_compute_mu1}
		Let $M^{n}\subset V^{n+1}$ be a pseudosphere or a pseudohyperbolic space, and $A\subset M$ either a smooth domain or a hypersurface without boundary. Denote $H=H(A)$.
		\begin{enumerate}
			\item 
			Assume that for each $x\in H$, $H$ is either pseudo-Riemannian at $x$ or tangentially regular at $x$.  Then \eqref{eq:crofton_general} holds for $k=1$. 
			\item If $\mathbb P(H)\subset\mathbb P(V)$ is strictly convex, then \eqref{eq:crofton_general} holds for all $k$.
		\end{enumerate}
	\end{Corollary}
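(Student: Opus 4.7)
The plan is to verify the hypotheses of Proposition \ref{prop:apply_crofton_general} with $\mu=m_k$, which would immediately yield identity \eqref{eq:crofton_general}. Two conditions must hold: first, $A\pitchfork\Cr(m_k)$; second, $\mathrm{CrWF}^k(A)\cap s^*\mathrm{WF}(m_k)=\emptyset$. By Proposition \ref{prop:boundary_value_grassmannian}(i), $\mathrm{WF}(m_k)\subset N^*\Lambda$ for $\Lambda:=\bigcup_{\nu\geq 1}\Lambda^\nu_{n+1-k}(V)$, and since conormal bundles are invariant under the antipodal involution $s$, the second condition becomes $\mathrm{CrWF}^k(A)\cap N^*\Lambda=\{0\}$. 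The first condition will follow from the fact that $\Cr(m_k)$ is an $\OO(Q)$-invariant generalized valuation and thus, by the Hadwiger-type classification of \cite{bernig_faifman_solanes_part2}, a linear combination of intrinsic volumes, whose wave fronts lie in $(\emptyset, N^*\LC^*_M)$; the hypotheses on $H$ are designed precisely to ensure $A$ meets $\LC^*_M$ transversally at every point.

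For part (i) with $k=1$, I would analyze a hyperplane $E\in\Gr_n(V)$. If $E\notin B_H$, then $\widehat\chi_A$ is locally constant near $E$ and there is nothing to check. Otherwise, at any tangent point $x$ of $E$ one has $E=T_xH\oplus\mathbb{R}x$; since $x\in M$ is $Q$-orthogonal to $T_xM\supset T_xH$ with $Q(x,x)\neq 0$, the nullity of $Q|_E$ equals that of $Q|_{T_xH}$, and this value is in fact independent of the chosen tangent point of $E$. Thus, if any tangent point is pseudo-Riemannian then $E\notin\Lambda$, $m_1$ is smooth near $E$, and the wave front condition is vacuous. Otherwise every tangent point $x$ has $Q|_{T_xH}$ of nullity~$1$, so $H$ fails to be pseudo-Riemannian at $x$ and the hypothesis forces $H$ to be tangentially regular at each such $x$; consequently $E$ is a regular tangent of $A$. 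Proposition \ref{prop:base_of_induction} then gives $\mathrm{CrWF}^1_E(A)\subset N^*_E B_H$, and Proposition \ref{prop:LC_regularity_implies_WF}, applied locally near each tangent point with tangential regularity supplying the local LC-regularity needed, yields transversality of every embedded part of $B_H$ with $\Lambda^1_n$ at $E$.

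For part (ii), strict convexity of $\mathbb{P}(H)\subset\mathbb{P}(V)$ is, by the lemma preceding Proposition \ref{prop:base_of_induction}, exactly the condition that every tangent $E$ to $A$ is regular for all codimensions $k$. The three-step analysis then extends to arbitrary $k$: at a tangent $E\in B_H\cap\Lambda^\nu_{n+1-k}(V)$, Proposition \ref{prop:base_of_induction} places $\mathrm{CrWF}^k_E(A)$ inside $N^*_E B_H$, while Proposition \ref{prop:LC_regularity_implies_WF} supplies the requisite transversality with $\Lambda^\nu_{n+1-k}(V)$; for $E\notin B_H$ there is again nothing to check. WF-transversality of $A$ with $\Cr(m_k)$ then follows by the same combination of the Hadwiger classification and strict convexity producing the required transversality with $\LC^*_M$.

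The main obstacle I anticipate is rigorously reconciling the regularity hypotheses with the LC-regularity assumption appearing in Proposition \ref{prop:LC_regularity_implies_WF}: Case 2 of that proof, where $\ker(g_p)\subset E$ (which is exactly the situation at every degenerate tangent point when $k=1$), uses LC-regularity to ensure smoothness of the degenerate locus $S\subset H$ near $p$. One must check that the dichotomy in (i), and strict convexity in (ii), genuinely furnish this local LC-regularity at the relevant degenerate tangent points; this delicate point is precisely what drives the choice of hypotheses in the statement.
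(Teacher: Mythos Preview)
Your approach is essentially the paper's, spelled out in more detail. The paper's proof is one line: it cites \cite[Lemma 4.7]{bernig_faifman_solanes} to conclude that under either hypothesis $H$ is LC-regular, and then invokes Corollary~\ref{cor:can_compute_on_LC}, which packages exactly the chain of Propositions~\ref{prop:apply_crofton_general}, \ref{prop:base_of_induction}, \ref{prop:LC_regularity_implies_WF} and \ref{prop:boundary_value_grassmannian}(i) that you unwind by hand. Your case analysis in (i) --- a $Q$-degenerate hyperplane tangent must have every one of its tangent points in the non--pseudo-Riemannian locus of $H$, hence tangentially regular by hypothesis, hence a regular tangent --- is correct and is precisely what is needed (though left implicit in the paper) to verify the ``all $Q$-degenerate tangents are regular'' hypothesis of Corollary~\ref{cor:can_compute_on_LC}.

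The ``main obstacle'' you flag at the end is exactly the right one, and it is the only genuine gap in your write-up: both the WF-transversality $A\pitchfork\Cr(m_k)$ (via LC-transversality of $A$) and the applicability of Proposition~\ref{prop:LC_regularity_implies_WF} rest on LC-regularity of $H$, and this does not follow from tangential regularity by anything proved in the present paper. The paper does not supply a new argument either; it imports \cite[Lemma 4.7]{bernig_faifman_solanes}, which asserts that a hypersurface which at each point is either pseudo-Riemannian or tangentially regular is LC-regular. With that citation in place your argument is complete.
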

	\proof
	In both cases, it follows from \cite[Lemma 4.7]{bernig_faifman_solanes} that $H$ is LC-regular, and we can apply Corollary \ref{cor:can_compute_on_LC}.
	\endproof
		
	\textbf{Example.} The complex-valued distribution $m_{n}\in\mathcal M^{-\infty}(\mathbb P(V))$ is invariant under the group of projective transformations preserving the quadric $[Q]=\{Q=0\}$. Its singular support is $[Q]$, and $\WF(m_{n})\subset N^*[Q]$. In particular, $m_{n}(A)$ is well defined for any domain $A\subset \mathbb P(V)$ that is smooth near $[Q]$ and transversal to it. When $Q$ is definite, the quadric $[Q]$ has no real points and $m_{n}$ is the Haar measure on the round projective space.
	
	\subsection{The flat case}
	Next we construct a translation- and $\OO(p,q)$-invariant distribution on the affine Grassmannian $\overline\Gr_{p+q-k}(\R^{p,q})$.
	
	\begin{Proposition}\label{prop:derivative_crofton}
		Let $P$ be a $Q$-compatible Euclidean structure in $W=\R^{p+1,q}=W_+\oplus W_-$. Let $x\in W_+\cap S^{p,q}$, $T=T_xS^{p,q}$, and define
		\begin{displaymath}
		s\colon\overline{\Gr}_{p+q-k}(T) \longrightarrow\Gr_{p+q+1-k}(W) ,\quad s(v+F)=F\oplus \R(x+v),\ {F\in\Gr_{p+q-k}(T)},
		\end{displaymath}
		which is a diffeomorphism onto its open image.
		Given $t>0$, consider the homothety $v\mapsto tv$ on $T$ and the induced map $h_t$ on $\overline{\Gr}_{p+q-k}(T)$.

		\begin{enumerate}
			\item[i)] Let $dE$ be an $\OO(P)$-invariant measure on $\Gr_{p+q+1-k}(V)$, thus given by a smooth density. Then 
			\begin{displaymath}
			d\overline F=\frac1{k!}\left.\frac{d^k}{dt^k}\right|_{t=0}h_{t}^* s^*dE  
			\end{displaymath}
			is an $\overline{\OO(P|_T)}$-invariant measure on $\overline\Gr_{p+q-k}(T)$.
			\item[ii)] Let $X_0\colon\Gr_{p+q+1-k}(W)\to \Sym_{p+q+1-k}(\R)$ be as in Proposition \ref{prop:m0_explicit}, and let $X_0'$ be the corresponding map on $\Gr_{p+q-k}(T)$. Then
			\begin{displaymath}
			\left.\frac1{k!}\frac{d^k}{dt^k}\right|_{t=0} (h_{1/t})_* (s^{-1})_* ( X_0^*f_{-\frac{p+q+1}2}(E)dE)= (X_0')^* f_{-\frac{p+q+1}2}(F) d\overline F ,
			\end{displaymath}
			and this generalized measure is $\overline{\OO(Q|_T)}$-invariant. Here $(s^{-1})_*$ denotes the push-forward by $s^{-1}$  of the restriction to the open set $\mathrm{Image}(s)$.
		\end{enumerate}
	\end{Proposition}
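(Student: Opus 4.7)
The plan is to prove part (i) by an explicit Jacobian computation and then deduce part (ii) from the smooth case by meromorphic continuation in an auxiliary spectral parameter. Writing $s_t := s \circ h_t$, we have $s_t(v+F) = F \oplus \R(x + tv)$, which is a diffeomorphism onto its open image for $t \neq 0$ (as one checks by projecting onto $T$ along $\R x$), while $s_0$ factors through the linearization $v + F \mapsto F \oplus \R x$ with $k$-dimensional fibers. Consequently, the pullback of any smooth top-dimensional density vanishes to order $k$ in $t$ at $t=0$, and the normalized $k$-th derivative extracts the leading nonzero coefficient.

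For part (i), since $x \in W_+$ is $P$-unit and $T = x^{\perp_Q} = x^{\perp_P}$ (using $W_+ \perp_Q W_-$), the homothety $h_t$ scales only the $k$-dimensional transversal fiber of the affine bundle $\overline{\Gr}_{p+q-k}(T) \to \Gr_{p+q-k}(T)$, so $|J h_t| = t^k$ and hence $|J s_t|(v,F) = t^k |J s|(tv, F)$. Writing $dE$ locally as $\rho\, d\sigma$, we obtain $s_t^* dE = t^k g(t, v, F)\, dv \wedge dF$ with $g$ smooth jointly in its arguments; the Leibniz rule yields
\begin{displaymath}
\frac{1}{k!} \left. \frac{d^k}{dt^k} \right|_{t=0}\bigl(t^k g(t, v, F)\bigr) = g(0, v, F) = \rho(F \oplus \R x) \cdot |Js|(0, F),
\end{displaymath}
which is \emph{independent of $v$}. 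Thus $d\overline F$ is a well-defined smooth measure factoring as a translation-invariant $dv$ on each affine fiber $T/F$, giving translation-invariance for free. For $\OO(P|_T)$-invariance, extend any $g \in \OO(P|_T)$ to $\tilde g \in \OO(P)$ by requiring $\tilde g(x)=x$; then $s \circ g = \tilde g \circ s$ and $h_t$ commutes with $g$, so $s_t \circ g = \tilde g \circ s_t$, and the $\OO(P)$-invariance of $dE$ transfers.

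For part (ii), the essential computation is the asymptotic of $\widehat X_0 \circ s_t$. Choosing a $P$-orthonormal basis of $E = F \oplus \R(x + tv)$ adapted to $F$ together with the unit vector $(x+tv)/\sqrt{1 + t^2 |v|_P^2}$ yields the block expansion
\begin{displaymath}
X_0^P(E) = \begin{pmatrix} X_0^{P|_T}(F) & O(t) \\ O(t) & 1 + O(t^2) \end{pmatrix},
\end{displaymath}
and after extracting the core by removing $\pm 1$ eigenblocks, $\widehat X_0(s_0(v+F))$ equals either $\widehat X_0'(F)$ or the block sum $\widehat X_0'(F) \oplus [1]$ depending on the signature parameters of $W$ versus $T$. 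Since $f_\lambda(1) = 1$ and $f_\lambda$ is multiplicative on block-diagonal matrices, in both cases $f_\lambda(\widehat X_0(s_0(v+F))) = f_\lambda(\widehat X_0'(F))$, which extends to a smooth-in-$t$ family for $t$ small. Combining with part (i), for $\Re \lambda$ sufficiently large (where $f_\lambda$ is a continuous function) the $k$-th derivative equals $\widehat X_0'^* f_\lambda \cdot d\overline F$. Both sides depend meromorphically on $\lambda$ with poles controlled by Lemma \ref{lem_f_lambda}, so the identity extends to $\lambda = -\tfrac{p+q+1}{2}$ by analytic continuation. Finally, $\overline{\OO(Q|_T)}$-invariance is established by the same equivariance argument, extending each $g \in \OO(Q|_T)$ to $\tilde g \in \OO(Q)$ fixing $x$ and invoking $\OO(Q)$-invariance of $\widetilde m_k^0$ from Proposition \ref{prop:m0_explicit}.

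The principal obstacle is justifying that the pullback by $s_t$ and the $k$-th derivative in $t$ both commute with the analytic continuation in $\lambda$, which amounts to controlling the wave-front set of $s_t^*(\widehat X_0^* f_\lambda \cdot dE)$ uniformly in $t$ near $0$. This is handled by verifying that the Jacobian expansion of part (i) is compatible with pullback of generalized functions, using the convergence in the normal H\"ormander topology established in Proposition \ref{prop:hormander_convergence} together with the transversality statement implicit in the proof of Proposition \ref{prop:m0_explicit} applied to the whole family $\{s_t\}_{t \in [0,\epsilon)}$.
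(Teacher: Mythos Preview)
Your approach is essentially the same as the paper's: part (i) is handled by a local trivialization showing $h_t^*s^*dE = t^k\cdot(\text{smooth in }t)$ with leading term independent of the fiber coordinate, and part (ii) is obtained by combining this with the observation that $s_t^*\widehat X_0^*f_\lambda \to (\widehat X_0')^*f_\lambda$ as $t\to 0$, first for $\lambda$ in a range where $f_\lambda$ is continuous and then by analytic continuation.

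The one difference is in how the function factor is controlled. The paper writes down an exact multiplicative identity $f_\lambda(\widehat X_0(s(tw+F)))=(1+t^2P(w))^{-\lambda}Q(x+tw)^\lambda f_\lambda(\widehat X_0'(F))$, valid because the extra basis vector $u=(x+tw)/\sqrt{1+t^2P(w)}$ contributes a single diagonal entry to the Gram matrix; this makes the Leibniz step work directly for all $\lambda$ without further wave-front bookkeeping. You instead give only a block-diagonal expansion to leading order in $t$ and then must argue separately that taking the $k$-th derivative at $t=0$ commutes with analytic continuation in $\lambda$---the ``principal obstacle'' you flag in your final paragraph. Your sketch for resolving it via Propositions~\ref{prop:hormander_convergence} and~\ref{prop:m0_explicit} is reasonable but would need to be made precise: one has to check that $\widehat X_0\circ s_t$ remains transversal to the strata $\Gamma^\kappa_\nu$ uniformly for $t$ in a neighborhood of $0$, so that the pullback family $t\mapsto s_t^*\widehat X_0^*f_\lambda$ is continuous in the H\"ormander topology. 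If you can supply the exact factorization (note that $Q(f_i,x)=0$ since $f_i\in T$, so the only off-diagonal entries involve $Q(f_i,w)$), you avoid this altogether.
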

	
	\begin{proof}
		\begin{enumerate} 
		 \item For $g \in \OO(P|_T)=\Stab_{\OO(P)}(x)\subset \OO(P)$, since $g\circ s\circ  h_t=s\circ h_t\circ g$ and $g^* dE=dE$, we have $g^*h_{t}^* s^*dE=h_t^* s^*g^*dE=h_t^* s^*dE$, which yields $\OO(P|_T)$-invariance. As for translation invariance, let $\rho_U\colon U\times \R^k\to \overline{\Gr}_{p+q-k}(T)$ be a local trivialization of the bundle $\pi\colon\overline{\Gr}_{p+q-k}(T)\to{\Gr_{p+q-k}}(T)$, and put $\eta=\rho_U^*s^*dE$. Then $h_t\circ \rho_U(F,w)=\rho_U(F,tw)$ and thus
		\begin{displaymath}
		\rho_U^*h_t^*s^*(dE)_{(F,w)}=t^k\eta_{(F,tw)}=t^k\eta_{(F,0)} +O(t^{k+1}).
		\end{displaymath}
		Since the induced action of a translation of $T$ on $U\times \R^k$ has the form $(F,w)\mapsto (F, w+\varphi(F))$, the translation invariance of $\overline{dF}$ follows.
		\item 
			Let $f_1,\ldots, f_{p+q-k}$ be a $P$-orthonormal basis of $F\in \Gr_{p+q-k}(T)$, and let $w\in T$ be $P$-orthogonal to $F$. A $P$-orthonormal basis of $s(tw+F)$ is $(1+t^2P(w))^{-\frac12}(x+tw), f_1,\ldots, f_{p+q-k}$. Hence, the Gram matrices $X_0',X_0$ of $Q$ restricted to $F$ resp. $s(tw+F)$ satisfy 
			\begin{displaymath}
                    \det X_0=(1+t^2P(w))^{-1}Q(x+tw)\det X_0'.
			\end{displaymath}
		
		Therefore, for $\lambda>0$, $F\in\Gr_{p+q-k}(T)$ and $w\in F^P\cap T$ we have
		\begin{displaymath}
		f_\lambda(X_0(s(tw+F)))=(1+t^2P(w))^{-\lambda}Q({x+tw})^\lambda f_\lambda(X_0'(F))
		\end{displaymath} 
		whenever $Q(x+tw)>0$, where $f_\lambda$ is defined by Lemma \ref{lem_f_lambda} on $\Sym_{p+q+1-k}(\R)$ or $\Sym_{p+q-k}(\R)$ depending on the argument.

		By analytic continuation we get 
		\begin{displaymath}
            s^* X_0^*f_{\lambda}(tw+F)=(1+t^2P(w))^{-\lambda}Q({x+tw})^\lambda (X_0')^*f_\lambda(tw+F)
		\end{displaymath}
        for all $\lambda$. Hence,  
		\begin{displaymath}
		\lim_{t\to 0} h_t^*  s^* X_0^* f_\lambda= ( X_0')^* f_\lambda.
		\end{displaymath}

		In the proof of $i)$ we have seen $(h_{1/t})_*(s^{-1})_*dE=O(t^k)$. Hence, by continuity
		\begin{align*}
		&\left.\frac{d^k}{dt^k}\right|_{t=0} (h_{1/t})_* (s^{-1})_* ( X_0^*f_{-\frac{p+q+1}2}(E)dE)=\\&=\lim_{t\to 0}h_t^*s^* X_0^*f_{-\frac{p+q+1}2}\left.\frac{d^k}{dt^k}\right|_{t=0} (h_{1/t})_* (s^{-1})_* dE\\
		&=k!( X_0')^* f_{-\frac{p+q+1}2} d\overline F.
		\end{align*}
		Translation invariance is clear. Further, if $g\in \OO(Q|_T)\subset \OO(Q)$, then $g\circ s\circ  h_t=s\circ h_t\circ g$. Since $ X_0^*f_{-\frac{p+q+1}2}(E)dE$ is $\OO(Q)$-invariant, this yields $\OO(Q|_T)$-invariance.
		\end{enumerate}
	\end{proof}
	
	Translation-invariance and ${\OO(P|_T)}$-invariance characterize $d\overline F$ uniquely up to normalization. The normalization can be deduced from Theorem \ref{thm_crofton_formula} in the case $q=0$. As for the translation-invariant and $\OO(Q|_T)$-invariant generalized measure obtained on $T\cong \R^{p,q}$, we take the normalization of Definition \ref{def_tilde_m} as follows.
	
	\begin{Definition}
	 On $\overline \Gr_{p+q-k}(\R^{p,q})$ we fix the following translation-invariant and ${\OO(p,q)}$-invariant generalized measure
		\begin{displaymath}
		\check{m}_k:=e^{\frac{\bi\pi}2 (p+q+1-k)q}  (X_0')^*f_{-\frac{p+q+1}{2}}(F) d\overline F.
		\end{displaymath}
	\end{Definition}
	
		The Crofton map in the flat case is
	\begin{displaymath} 
	\Cr: \mathcal M^{-\infty}(\overline{\Gr}_{p+q-k}(V))\to \mathcal V^{-\infty}(V),
		\end{displaymath}
	given by $\langle \Cr(\mu), \psi\rangle= \int_{\overline{\Gr}_{p+q-k}(V)}\psi(\overline E)d\mu(\overline E)$ for all $\psi\in\mathcal V_c^\infty(V)$.
	
	The results of the present section and sections \ref{sec:crofto}, \ref{sec:LC_crofton_WF} can be easily adapted to the flat pseudo-Euclidean setting. Let us state explicitly Corollary \ref{cor:can_compute_mu1} in the flat case.
	\begin{Corollary}
		Let $A\subset \R^{p,q}$ be either a smooth domain or a hypersurface without boundary. Denote by $H=H(A)$ the corresponding closed hypersurface.
		\begin{enumerate}
			\item 
			Assume that for each $x\in H$, $H$ is either pseudo-Riemannian near $x$ or has non-zero Gauss curvature at $x$.  Then
			\begin{equation}\label{eq:crofton_affine}\Cr(\check m_k)(A)=\int_{\overline{\Gr}_{p+q-k}(V)}\chi(A\cap E)d\check m_k(E). \end{equation}
			holds for $k=1$. 
			\item If $H$ is strictly convex, then \eqref{eq:crofton_affine} holds for all $k$.
		\end{enumerate}
	\end{Corollary}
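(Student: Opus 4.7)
The plan is to adapt the proof of Corollary \ref{cor:can_compute_mu1} to the flat pseudo-Euclidean setting, exploiting the paper's remark that the machinery of Sections \ref{sec:crofto} and \ref{sec:LC_crofton_WF} transfers verbatim. In both cases (i) and (ii), \cite[Lemma 4.7]{bernig_faifman_solanes} implies that $H$ is LC-regular, so it suffices to establish the flat analog of Corollary \ref{cor:can_compute_on_LC}: for $A\subset\R^{p,q}$ with LC-regular $H=H(A)$ all of whose $Q$-degenerate tangents of codimension $k$ are regular, one has $\Cr(\check m_k)(A)=\int\chi(A\cap \overline E)\,d\check m_k(\overline E)$.

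To prove this analog I would argue in two parts. First, $\Cr(\check m_k)\in\mathcal V^{-\infty}(\R^{p,q})$ is invariant under $\overline{\OO}(p,q)$ by construction of $\check m_k$, so by \cite[Theorem C]{bernig_faifman_solanes_part2} it is a linear combination of intrinsic volumes; in particular $A$ is WF-transversal to $\Cr(\check m_k)$. Second, to apply the flat analog of Proposition \ref{prop:apply_crofton_general}, one needs $\Cr\WF^k(A)\cap s^*\WF(\check m_k)=\emptyset$, which is the technical heart of the argument. For the wave front of $\check m_k$, I would establish a flat analog of Proposition \ref{prop:boundary_value_grassmannian}(i), namely $\WF(\check m_k)\subset N^*\overline\Lambda$, where $\overline\Lambda=\bigcup_{\nu\geq 1}\overline\Lambda^\nu_{p+q-k}$ is the stratification of $\overline{\Gr}_{p+q-k}(V)$ by the nullity of $Q$ on the linear direction. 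By Proposition \ref{prop:derivative_crofton}, $\check m_k$ is realized as a $k$-th normal derivative along the embedding $s\colon \overline{\Gr}_{p+q-k}(T)\hookrightarrow\Gr_{p+q+1-k}(W)$ into a linear Grassmannian one dimension higher, and $s$ maps $\overline\Lambda^\nu_{p+q-k}$ into $\Lambda^\nu_{p+q+1-k}(W)$; since differentiation in a transverse direction preserves the conormal-bundle structure, the wave front inclusion follows from Proposition \ref{prop:boundary_value_grassmannian} applied in $W$. On the side of $A$, I would adapt Proposition \ref{prop:LC_regularity_implies_WF} and Proposition \ref{prop:base_of_induction} to the affine setting essentially verbatim (replacing $\R p\oplus T_pM$ by the ambient $T_pV$ in the linear algebra), to conclude that at every regular tangent $\overline E$ one has $\Cr\WF^k_{\overline E}(A)\subset N^*_{\overline E} B_H \subset N^*\overline\Lambda^{\mathrm{reg}}$, disjoint from $s^*\WF(\check m_k)$.

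The remaining input is the regularity of degenerate tangents in each case. For part (i) with $k=1$, a tangent hyperplane is either non-degenerate or of nullity one; at a tangent point $x\in H$, the pseudo-Riemannian hypothesis rules out nullity-one tangents locally, while the non-vanishing Gauss curvature hypothesis forces all nullity-one tangents to be regular, by the second-fundamental-form criterion. For part (ii), strict convexity of $\mathbb P(H)$ (which follows from strict convexity of $H$) is exactly the statement that every tangent of every codimension is regular. The main obstacle I anticipate is the adaptation of Proposition \ref{prop:boundary_value_grassmannian} to the affine Grassmannian through the $k$-fold differentiation construction, which requires verifying that the normal H\"ormander convergence of $m_k^{\mathbf i\epsilon}\to m_k$ on $W$ descends, via the operation $\frac{1}{k!}\frac{d^k}{dt^k}|_{t=0}(h_{1/t})_*(s^{-1})_*$, to a convergence of approximating smooth measures to $\check m_k$ with controlled wave front set; once this is in place, the argument of Corollary \ref{cor:can_compute_on_LC} goes through unchanged.
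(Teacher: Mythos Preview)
Your proposal is correct and follows precisely the approach the paper intends: the paper gives no separate proof for this corollary, simply declaring that ``the results of the present section and sections \ref{sec:crofto}, \ref{sec:LC_crofton_WF} can be easily adapted to the flat pseudo-Euclidean setting'' and then stating the corollary as the flat analog of Corollary \ref{cor:can_compute_mu1}. Your outline is exactly that adaptation, tracing through the same chain (LC-regularity via \cite[Lemma 4.7]{bernig_faifman_solanes}, then the flat versions of Corollary \ref{cor:can_compute_on_LC}, Corollary \ref{cor:CrWF_of_LC}, Proposition \ref{prop:boundary_value_grassmannian}(i), and Proposition \ref{prop:apply_crofton_general}).

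One simplification worth noting: you propose to extract $\WF(\check m_k)\subset N^*\overline\Lambda$ by pushing the normal H\"ormander convergence of Proposition \ref{prop:boundary_value_grassmannian} through the $k$-fold differentiation of Proposition \ref{prop:derivative_crofton}, and you flag this as the main obstacle. In fact no convergence argument is needed here. By Proposition \ref{prop:derivative_crofton}(ii) and the definition of $\check m_k$, one has $\check m_k = c\cdot \pi^*\bigl((\widehat X_0')^* f_{-\frac{p+q+1}{2}}\bigr)\, d\overline F$, where $\pi\colon \overline{\Gr}_{p+q-k}(T)\to \Gr_{p+q-k}(T)$ is the projection to the linear direction and $d\overline F$ is smooth. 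The singular factor lives entirely on the linear Grassmannian, where its wave front is already controlled by (the $T$-analog of) Proposition \ref{prop:m0_explicit} and the argument of Proposition \ref{prop:boundary_value_grassmannian}(i); pulling back by the submersion $\pi$ then gives $\WF(\check m_k)\subset N^*\overline\Lambda$ directly. This bypasses the delicate descent of H\"ormander convergence through the limiting construction that you anticipated as problematic.
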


	%------------------------------------------------------------------------------------------------------

	\section{Crofton formulas for generalized pseudospheres}
	\label{sec:template}
	
	For the de Sitter space embedded in Lorentz space, one can compute the Crofton formulas through a direct computation of the restriction of the measures to subspaces, combined with the Hadwiger theorem and the template method. However for general signatures, an explicit computation appears to be hard. Instead, we carry out an analytic extension argument, which recovers the Crofton formulas for all signatures in a unified fashion.
	
	For $\zeta>\frac12$, we denote by $S_\zeta={Q_\zeta^{-1}(1)}\subset \R^{p+q}=\R^{n+1}$ the unit sphere in the Euclidean space $(\R^{n+1},Q_\zeta)$. For $\zeta=0$ we have $S^{p-1,q}=Q_0^{-1}(1)$ with the induced pseudo-Riemannian metric $Q_{0}$.  We will also denote by $S^{n}$ the unit sphere in $\R^{n+1}$ with respect to some fixed Euclidean structure (which is independent of $\zeta$).
	
	In the following we make use of the operation of restriction of Crofton distributions, as described in Section \ref{sec:crofton_functorial}.
	
	\begin{Proposition} \label{prop:restrictions}
		For the standard inclusion $e\colon \R^{p,q}\hookrightarrow \R^{p+l,q+r}$, we have $e^*m_k=m_k$.
	\end{Proposition}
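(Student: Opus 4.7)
The plan is to argue by a three-step reduction: establish the identity in the smooth definite regime where both sides are classical Haar probability measures, propagate it by analytic continuation in $\zeta \in U_\C$, and then conclude by passing to the distributional boundary value $\zeta \to 0^+$.

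For the first step, I would show that for real $\zeta > \tfrac12$ one has $m^\zeta_k = d\sigma_{Q_\zeta}$, the $\OO(Q_\zeta)$-invariant probability measure on $\Gr_{n+1-k}(V)$. Taking $P = Q_\zeta$ in the definition of $\widetilde m_\zeta^P$ gives $X^{Q_\zeta}_\zeta = I$ and hence $\widetilde m_\zeta^{Q_\zeta} = d\sigma_{Q_\zeta}$; combining the transformation rule between $\widetilde m_\zeta^{P_0}$ and $\widetilde m_\zeta^{Q_\zeta}$ with the identity $\det Q_\zeta = (2\zeta+1)^p (2\zeta-1)^q$, the normalizing prefactor of $m^\zeta_k$ exactly cancels the volume ratio. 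Since $Q_\zeta$ is positive definite on both $V = \R^{p+l,q+r}$ and $U = \R^{p,q}$, both $m^{\zeta,V}_k$ and $m^{\zeta,U}_k$ are Euclidean Haar probability measures. The pushforward of $d\sigma_{Q_\zeta}$ under the generically-defined map $E \mapsto E \cap U$ is $\OO(Q_\zeta|_U)$-equivariant of total mass one, hence equals $d\sigma_{Q_\zeta|_U}$ by uniqueness of the invariant probability. This yields $e^* m^{\zeta,V}_k = m^{\zeta,U}_k$ for real $\zeta > \tfrac12$.

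Next, both $e^* m^{\zeta,V}_k$ and $m^{\zeta,U}_k$ are analytic families in $\zeta \in U_\C$ of smooth complex measures; analyticity on the left passes through the pushforward under the fixed rational map $E \mapsto E \cap U$, which is a submersion on the open set $W_U$. Since the two families agree on the real ray $(\tfrac12, \infty)$, they agree throughout $U_\C$ by analytic continuation.

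Finally, by Proposition \ref{prop:boundary_value_grassmannian} we have $m^{\mathbf i\epsilon,V}_k \to m^V_k$ and $m^{\mathbf i\epsilon,U}_k \to m^U_k$ as $\epsilon \to 0^+$ in the normal H\"ormander topology on $\mathcal M^{-\infty}_{N^* \Lambda}$. By the results of Section \ref{sec:restriction_crofton}, $e^*$ extends sequentially continuously to distributions whose wave front lies in any closed cone disjoint from $N^* S_U$, so taking the limit in the previous step yields the desired identity $e^* m^V_k = m^U_k$. The main technical obstacle is verifying the wave front transversality $N^* \Lambda \cap N^* S_U = \emptyset$; this is a pointwise linear algebra check at each $E \in \Lambda^\nu \cap S_U^j$, combining the tangent space description \eqref{eq_tangent_lambda} with the standard Schubert description of $T_E S_U^j$ via Lemma \ref{lem:pair_of_spaces}. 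This transversality is essentially the content of the functoriality setup in \cite[Appendix B]{faifman_crofton}, which I would invoke rather than redo.
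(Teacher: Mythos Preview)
Your three-step plan is correct and is exactly the paper's argument: uniqueness of the invariant probability for $\zeta>\tfrac12$, analytic continuation to $U_\C$, then the boundary limit via Proposition~\ref{prop:boundary_value_grassmannian}. One small caution: \cite[Appendix~B]{faifman_crofton} supplies the general machinery for $e^*$ under a wave-front hypothesis, but does not itself establish $N^*\Lambda\cap N^*S_U=\emptyset$ for these particular strata; that disjointness is a separate linear-algebraic verification (using \eqref{eq_tangent_lambda} together with $U\cap U^Q=\{0\}$ for the standard inclusion), so you should carry it out rather than present it as already done.
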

	
	\begin{proof}
	Note first that the restriction $e^*m_k$ is well-defined by \cite[Remark 2.13]{faifman_crofton}. For $\zeta>\frac12$, $Q_\zeta$ is positive definite, and so $e^*m_k^\zeta=m_k^\zeta$ by the uniqueness of probability measure on the Grassmannian invariant under the positive definite orthogonal group, as $$e^*:\mathcal M^\infty(\Gr_{p+q+l+r-k}(\R^{p+l,q+r}))\to \mathcal M^\infty(\Gr_{p+q-k}(\R^{p,q}))$$ is essentially the pushforward operation under intersection with $\R^{p,q}$. By analytic extension in $\zeta$, we get $e^*m_k^{\bi \epsilon} = m_k^{\bi \epsilon}$. The statement then follows from Proposition \ref{prop:boundary_value_grassmannian} ii).
	\end{proof}

	\begin{Proposition}\label{prop:weak_continuity} Given $A\in\mathcal P( S^{p-1,q})$, let $\overline A\in \mathcal P(S^{n})$ be its radial projection. Assume $A$ is either an LC-regular hypersurface, or a smooth domain with LC-regular boundary. Assume further that either all $Q_0$-degenerate tangents of codimension $k$ are regular, or $\chi(A\cap E)$ is constant for a.e. plane $E$ of codimension $k$. Then
		\begin{displaymath}
		\lim_{\epsilon\to 0^+} \Cr_{S^{n}}(m_k^{\mathbf i\epsilon})(\overline A)=\Cr_{S^{p-1,q}}(m_k)(A).
		\end{displaymath}
	\end{Proposition}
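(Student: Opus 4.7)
The plan is to rewrite both sides of the equality as distributional integrals over the Grassmannian $\Gr_{n+1-k}(V)$ of the common function $\widehat\chi_A$, and then invoke Proposition \ref{prop:boundary_value_grassmannian}(ii) together with continuity of the distributional pairing in the normal H\"ormander topology.

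The first step is to check that $\widehat\chi_{\overline A}=\widehat\chi_A$ as $L^1$-functions on the Grassmannian (and hence, by Proposition \ref{prop:yomdin}, as distributions). Since $\overline A\subset\{Q_0>0\}\cap S^n$ is the radial projection of $A$, for every $E\in\Gr_{n+1-k}(V)$ either the radial projection restricts to a diffeomorphism $A\cap E\to\overline A\cap E$ (when $Q_0|_E$ takes positive values), or both $A\cap E$ and $\overline A\cap E$ are empty. Thus $\chi(A\cap E)=\chi(\overline A\cap E)$ outside the measure-zero locus of degenerate $E$. Combined with Definition \ref{def:smooth_crofton} applied to the smooth measure $m_k^{\mathbf i\epsilon}$, this gives
\begin{displaymath}
\Cr_{S^n}(m_k^{\mathbf i\epsilon})(\overline A)=\int_{\Gr_{n+1-k}(V)}\widehat\chi_A(E)\,dm_k^{\mathbf i\epsilon}(E).
\end{displaymath}

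Next I would interpret the right-hand side of the claimed identity similarly. In case (a) (all $Q_0$-degenerate tangents of codimension $k$ are regular), Corollary \ref{cor:can_compute_on_LC} gives $\Cr_{S^{p-1,q}}(m_k)(A)=\int\widehat\chi_A\,dm_k$ as a distributional pairing. In case (b) ($\widehat\chi_A$ constant a.e., equal to some $c$), Proposition \ref{prop:yomdin} identifies $\widehat\chi_A$ with the constant distribution $c$; its wave front is empty, so the hypotheses of Proposition \ref{prop:apply_crofton_general} are trivially satisfied (using that LC-regularity implies WF-transversality to $\Cr(m_k)$, which is a linear combination of intrinsic volumes by the Hadwiger-type result of \cite{bernig_faifman_solanes_part2}), and both sides reduce to $c\cdot\int dm_k^{\zeta}$, yielding the desired convergence immediately.

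The final step, in case (a), is to establish the distributional convergence
\begin{displaymath}
\int\widehat\chi_A\,dm_k^{\mathbf i\epsilon}\longrightarrow\int\widehat\chi_A\,dm_k\qquad (\epsilon\to 0^+).
\end{displaymath}
By Proposition \ref{prop:boundary_value_grassmannian}(ii), $m_k^{\mathbf i\epsilon}\to m_k$ in the normal H\"ormander topology on $\mathcal M^{-\infty}_{N^*\Lambda}$ with $\Lambda=\bigcup_\nu\Lambda^\nu_{n+1-k}(V)$, while Corollary \ref{cor:CrWF_of_LC} ensures $\WF(\widehat\chi_A)\cap N^*\Lambda=\emptyset$. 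Continuity of the pairing of generalized measures with distributions whose wave fronts are disjoint, in the normal H\"ormander topology (see \cite{brouder_dang_helein}), then delivers the limit.

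The main work has already been carried out in the preparatory sections: the decisive wave front disjointness $\WF(\widehat\chi_A)\cap N^*\Lambda=\emptyset$ ultimately rests on Proposition \ref{prop:LC_regularity_implies_WF}, which is the most technical result upstream. A secondary, less serious subtlety is the identification $\widehat\chi_{\overline A}=\widehat\chi_A$: the radial-projection argument gives pointwise equality outside the measure-zero locus where $Q_0|_E$ is degenerate, and Proposition \ref{prop:yomdin} is needed to promote this to equality as distributions on the Grassmannian.
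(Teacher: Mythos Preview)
Your proof is correct and follows essentially the same route as the paper: rewrite both sides as pairings $\langle m_k^{\mathbf i\epsilon},\chi(A\cap\bullet)\rangle$ and $\langle m_k,\chi(A\cap\bullet)\rangle$ via Proposition~\ref{prop:apply_crofton_general}/Corollary~\ref{cor:can_compute_on_LC}, then pass to the limit using Proposition~\ref{prop:boundary_value_grassmannian}(ii) and the wave front disjointness of Corollary~\ref{cor:CrWF_of_LC}. One harmless simplification: the identity $\chi(\overline A\cap E)=\chi(A\cap E)$ actually holds for \emph{every} $E$, not merely almost every, since the radial projection $x\mapsto x/\sqrt{Q_0(x)}$ furnishes a diffeomorphism $S^n\cap\{Q_0>0\}\to S^{p-1,q}$ that preserves each linear subspace $E$; so the measure-zero caveat and the appeal to Proposition~\ref{prop:yomdin} for this step are not needed.
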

	
	\begin{proof}
		We have by Proposition \ref{prop:apply_crofton_general} 
		\begin{displaymath}
            \Cr_{S^{n}}(m_k^{\mathbf i\epsilon})(\overline A)=\langle m_k^{\mathbf i\epsilon}, \chi(\overline A\cap \bullet)\rangle.
		\end{displaymath}
By Corollary \ref{cor:can_compute_on_LC}, it holds that
		\begin{align*}
		\Cr_{S^{p-1,q}}(m_k)(A)=\langle m_k, \chi(A\cap \bullet)\rangle.
		\end{align*}
		By part ii) of Proposition \ref{prop:boundary_value_grassmannian}, $m_k^{\mathbf i \epsilon}$ tends to $m_k$ as $\epsilon \to 0^+$ in the normal H\"ormander topology on $\mathcal M^{-\infty}_{N^*\Lambda}(\Gr_{n+1-k}(V))$. Combining Corollary \ref{cor:CrWF_of_LC} and Proposition \ref{prop:boundary_value_grassmannian} part i), we see that evaluating at $\chi(\overline A\cap \bullet)=\chi( A\cap \bullet)$ is continuous in this topology, and the statement follows. 
	\end{proof}
	
	We consider for a moment the case $q=1$. We will use two types of templates in the de Sitter sphere $S^{p-1,1}$. The first one is the Riemannian $(p-1)$-unit sphere \begin{displaymath}
	R^{p-1,0}=S^{p-1,1}\cap \{x_{p+1}=0\}.
	\end{displaymath}
	
	Fix $\theta\in (0,\pi/ 4)$. Our second template is
	\begin{displaymath}
	R^{p-1,1}=R^{p-1,1}(\theta)=\{x\in S^{p-1,1}\colon x_{p+1}^2 \leq \tan^2\theta (x_1^2+\cdots+x_{p}^2)\}. 
	\end{displaymath}
	The points of $\partial R^{p-1,1}$ lie at (time-like) distance of $\rho=\mathrm{arctanh}(\tan\theta)$ from $R^{p-1,0}$.

	For each $\zeta>\frac 12$ and $s=0,1$,  we denote by $T_\zeta^{p-1,s}$ the radial projection of $R^{p-1,s}$ on $S_\zeta$. Thus $T_\zeta^{p-1,0}$ is a totally geodesic $(p-1)$-sphere in $S_\zeta$, and the points of $\partial T_\zeta^{p-1,1}$ lie at distance $\varepsilon=\arctan(\sqrt\xi \tan\theta)
	$ from $T_\zeta^{p-1,0}$ where $\xi=\frac{2\zeta-1}{2\zeta+1}$. We then have 
	\begin{displaymath}
	 \frac{d\rho}{d\theta}=\frac{1+\tan^2 \theta}{1-\tan^2 \theta}, \quad  \frac{d\epsilon}{d\theta}=\sqrt{\xi} \frac{1+\tan^2 \theta}{1+\xi\tan^2 \theta}.
	\end{displaymath}

	We will denote by $\mu_k^\zeta\in\mathcal V^\infty(S_\zeta)$ the Riemannian intrinsic volumes in $S_\zeta$, and by $\mu_k\in\mathcal V^{-\infty}(S^{p-1,q})\otimes \C$ the (complex-valued)  intrinsic volumes on $S^{p-1,1}$. Note that $\mu_k^\zeta(T_\zeta^{p-1,0})=\mu_k(R^{p-1,0})$ for all $\zeta>\frac12$.

	\begin{Proposition}\label{prop:continuation_mu}
	 For $s=0,1$, the function $\zeta \mapsto \mu_k^\zeta(T_\zeta^{p-1,s})$ extends to a holomorphic function $f_{k,s}(\zeta)$ on $U_\C$ such that $\lim_{\zeta\to 0}f_{k,s}(\zeta)=\mu_k(R^{p-1,s})$.
	\end{Proposition}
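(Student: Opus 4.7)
The case $s=0$ is immediate via the Weyl principle: $\mu_k^\zeta(T_\zeta^{p-1,0})=\mu_k^{T_\zeta^{p-1,0}}(T_\zeta^{p-1,0})$, and the induced metric $Q_\zeta|_{\{x_{p+1}=0\}}=(2\zeta+1)P_0|_{\{x_{p+1}=0\}}$ makes $T_\zeta^{p-1,0}$ (whose Euclidean radius is $1/\sqrt{2\zeta+1}$) isometric to the standard Riemannian unit $(p-1)$-sphere for every $\zeta$. Hence $\mu_k^\zeta(T_\zeta^{p-1,0})$ is constant in $\zeta$. The same Weyl argument applied to $R^{p-1,0}\subset S^{p-1,1}$, whose induced metric $P_0|_{\{x_{p+1}=0\}}$ is again Riemannian, shows that $\mu_k(R^{p-1,0})$ equals this same constant, so $f_{k,0}$ is constant.

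For $s=1$, the set $T_\zeta^{p-1,1}$ is the spherical tube of geodesic radius $\varepsilon(\zeta)=\arctan(\sqrt{\xi(\zeta)}\tan\theta)$ about the totally geodesic subsphere $T_\zeta^{p-1,0}\subset S_\zeta$, where $\xi(\zeta)=(2\zeta-1)/(2\zeta+1)$. I would compute $\mu_k^\zeta(T_\zeta^{p-1,1})$ explicitly via the Riemannian tube formula in the unit sphere: parameterizing $T_\zeta^{p-1,1}$ by $(x,t)\in T_\zeta^{p-1,0}\times[-\varepsilon,\varepsilon]$ with induced metric $\cos^2(t)\,g_{T_\zeta^{p-1,0}}+dt^2$, each intrinsic volume decomposes into bulk and boundary contributions that are explicit elementary functions $P_k(\sin\varepsilon,\cos\varepsilon)$ (with coefficients involving the fixed intrinsic volumes of $S^{p-1}$). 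Since the M\"obius map $\xi$ sends $U_\C$ into $\C\setminus(-\infty,0]$, on which the chosen branch of $\sqrt{\cdot}$ is holomorphic, the expression $P_k(\sin\varepsilon(\zeta),\cos\varepsilon(\zeta))$ extends to a holomorphic function $f_{k,1}$ on $U_\C$.

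As $\zeta\to 0$ within $U_\C$ one has $\xi(\zeta)\to -1$ and $\sqrt{\xi}\to\bi$ by the branch choice, so $\varepsilon(\zeta)\to\bi\rho$; therefore $\sin\varepsilon\to\bi\sinh\rho$ and $\cos\varepsilon\to\cosh\rho$. To match $f_{k,1}(0)$ with $\mu_k(R^{p-1,1})$, I would parameterize the Lorentzian tube $R^{p-1,1}=\{x\cosh t+e_{p+1}\sinh t:x\in R^{p-1,0},\,|t|\leq\rho\}$, whose induced metric $\cosh^2(t)g_{S^{p-1}}-dt^2$ is precisely the formal substitution $\tilde t=\bi t$ applied to the spherical tube metric at $\zeta=0$. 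The intrinsic volumes of \cite{bernig_faifman_solanes} are built from complex Lipschitz--Killing densities that analytically continue their Riemannian counterparts under a change of signature, the conventional factor $\bi^q$ arising from the square root of the relevant Jacobian; hence the value of $\mu_k(R^{p-1,1})$ should agree termwise with $P_k(\bi\sinh\rho,\cosh\rho)=\lim_{\zeta\to 0}f_{k,1}(\zeta)$.

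The main obstacle is this final termwise matching. The Weyl tube formula produces integrals like $\int_{-\varepsilon}^{\varepsilon}\cos^a(t)\sin^b(t)\,dt$ which, under the substitution $\tilde t=\bi t$, transform into $\bi\int_{-\rho}^{\rho}\cosh^a(t)(\bi\sinh t)^b\,dt$. One must verify that each such continued integral coincides, sign-by-sign and with the correct $\bi^q$-prefactor and boundary orientation conventions, with the corresponding term in the intrinsic-volume formula of \cite{bernig_faifman_solanes} applied to the pseudo-Riemannian tube. This is a traceable but delicate bookkeeping exercise through the curvature-form definitions of the $\mu_k$.
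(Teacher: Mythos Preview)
Your treatment of $s=0$ matches the paper's. For $s=1$ your strategy---compute $\mu_k^\zeta(T_\zeta^{p-1,1})$ as an explicit function of $\sin\varepsilon,\cos\varepsilon$ via the spherical tube formula and then analytically continue $\varepsilon\to\bi\rho$---is a reasonable plan, and the holomorphicity on $U_\C$ would indeed follow from your observation about $\xi(U_\C)$. But the step you label ``the main obstacle'' is not bookkeeping: it is the entire content of the proposition. What needs to be shown is that the \emph{definition} of the pseudo-Riemannian $\mu_k$ from \cite{bernig_faifman_solanes}, built from generalized curvature measures $C^0_{k,p},C^1_{k,p}$ on the cosphere bundle, agrees on $R^{p-1,1}$ with the formal substitution $\varepsilon\mapsto\bi\rho$ in a Riemannian tube expression. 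Your appeal to ``complex Lipschitz--Killing densities that analytically continue their Riemannian counterparts'' is exactly the assertion to be proved, not a tool you may invoke.

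The paper closes this gap differently, and the mechanism is worth noting. Rather than computing closed-form tube integrals, it writes \emph{both} $\mu_k^\zeta(T_\zeta^{p-1,1})$ and $\mu_k(R^{p-1,1})$ as linear combinations, with the \emph{same} real constants $c_{p,k,\nu},d_{p,k}$, of the elementary invariants $[[0,\phi_{k+2\nu,0}^\zeta]]$ resp.\ $\bi^{p-1-k-2\nu}[[0,\phi_{k+2\nu,0}^-]]$ (this identity of coefficients is imported from \cite[Lemma~5.1]{bernig_faifman_solanes}). It then uses the structure equations $d\phi_{k,0}=\theta_0\wedge(\pm k\phi_{k-1,0}-(p-1-k)\phi_{k+1,0})$ to derive a first-order ODE in $\theta$ for $[[0,\phi_{k,0}^\bullet]]$ on the tube, whose $\zeta$-dependent coefficient $\sqrt{\xi}\,\frac{1+\tan^2\theta}{1+\xi\tan^2\theta}$ is holomorphic on $U_\C$ and converges to $\bi\cdot\frac{1+\tan^2\theta}{1-\tan^2\theta}$ as $\zeta\to0$. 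Downward induction from the anchor $k=p$ (the volume, handled by a direct Jacobian computation) and $k=p-1$ (half the boundary volume) then yields holomorphicity and the correct limit, with the power of $\bi$ tracked automatically by the recursion. If you want to push your direct approach through, you would need an analogue of the common expansion \eqref{eq:mu_k_expansion}--\eqref{eq:mu_k_expansion2}; without it the limit identification remains open.
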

	
	\begin{proof} For $s=0$, the statement is trivial as $\mu_k^\zeta(T_\zeta^{p-1,0})$  does not depend on $\zeta$. Let  us consider $s=1$.
		The radial projections $\pi_\zeta:S^{n}\to S_\zeta$ and $\pi_{0}:S^{n}\to S^{p-1,1}$ have Jacobians 
		\begin{align*}
		\mathrm{Jac}\pi_\zeta &= \left(\frac{\cos\epsilon}{\cos\theta}\right)^{p-1}\frac{d\epsilon}{d\theta}=\xi^{\frac12}\left(\frac{1+\tan^2\theta}{1+\xi\tan^2\theta}\right)^{\frac{p-1}2+1}, \quad \xi=\xi(\zeta)=\frac{2\zeta-1}{2\zeta+1}\\
		\mathrm{Jac}\pi_{0} &= \left(\frac{\cosh\rho}{\cos\theta}\right)^{p-1}\frac{d\rho}{d\theta}=\left(\frac{1+\tan^2\theta}{1-\tan^2\theta}\right)^{\frac{p-1}2+1}.
		\end{align*}
		
		 Since $\xi(\zeta)=\frac{2\zeta-1}{2\zeta+1}$ is continuous on $\C\setminus\{-\frac12\}$, it maps $U_\C$ to a simply connected region in $\C\setminus \{0\}$. Moreover $\xi(\zeta)\in\R$ if and only if $\zeta\in\R$,  and $\xi(\zeta)>0$ for $\zeta>\frac12$, so that $\xi(\zeta),1+\xi(\zeta)\tan^2\theta\neq 0$ for $\zeta\in U_\C$. It follows that the right hand side of the first equation extends to a holomorphic function in $U_\C$ whose limit as $\zeta\to 0$ equals the right hand side of the second equation multiplied by $\mathbf i$. The statement follows for $k=p$ since $\mu_{p}^\zeta=\vol_{p}$, and $\mu_{p}=\mathbf i \vol_{p}$  on $S^{p-1,1}$.
		
		Consider now $k=p-1$. Since $\mu_{p-1}(R^{p-1,1})=\frac12\vol_{p-1}(\partial R^{p-1,1}),\mu_{p-1}(T_\zeta^{p-1,1})=\frac12\vol_{p-1}(\partial T_\zeta^{p-1,1})$, and
		\begin{align*}
		\frac{d}{d\theta}\vol (R^{p-1,1}(\theta))&=\frac{d\rho}{d\theta}\frac{d}{d\rho}\vol (R^{p-1,1}(\theta))=\frac{1+\tan^2\theta}{1-\tan^2\theta}\vol_{p-1}(\partial R^{p-1,1}) \\
		\frac{d}{d\theta}\vol (T_\zeta^{p-1,1}(\theta)) &=\frac{d\epsilon}{d\theta}\frac{d}{d\epsilon}\vol (T_\zeta^{p-1,1}(\theta))    =\sqrt\xi\frac{1+\tan^2\theta}{1+\xi\tan^2\theta}\vol_{p-1}(\partial T_\zeta^{p-1,1}),
		\end{align*}this case follows from the previous one.
		
		For $(k-p-1)$ positive and odd, since $N^*R^{p-1,1}$ is contained in the time-like orbit of the cosphere bundle of $S^{p-1,1}$, we have
		\begin{align} \label{eq:mu_k_expansion}
		\mu_k(R^{p-1,1}) & = \sum_\nu \mathbf i^{p-1-k-2\nu}c_{p,k,\nu} [[0,\phi_{k+2\nu,\nu}^-]](R^{p-1,1}) +\mathbf{i} d_{p,k} \vol(R^{p-1,1})\\
		\mu_k^\zeta(T_\zeta^{p-1,1}) & = \sum_\nu c_{p,k,\nu}  [[0,\phi_{k+2\nu,\nu}^\zeta]] (T_\zeta^{p-1,1}) +d_{p,k} \vol(T_\zeta^{p-1,1}),\label{eq:mu_k_expansion2}
		\end{align}
		for certain constants $c_{p,k,\nu },d_{p,k}$, where $\phi_{k,r}^-$ is the smooth form given in Lemma 5.1 of \cite{bernig_faifman_solanes} when $M=S^{p-1,1}$, and $\phi_{k,r}^\zeta$ is the form $\phi_{k,r}^+$  in the same lemma when $M=S_\zeta$. For $k-p-1\geq 0$ and even, equations \eqref{eq:mu_k_expansion}, \eqref{eq:mu_k_expansion2} hold with the volume term removed.   
		
		Since $S_\zeta$ and $S^{p-1,q}$ have constant curvature 1, we have $\phi_{k,r}^-=\phi_{k,0}^-$ and $\phi_{k,r}^\zeta=\phi_{k,0}^\zeta$. By the structure equations (see \cite[eqs. (32),(33)]{bernig_faifman_solanes}) we have 
		\begin{align*}
		d\phi_{k,0}^-&=\theta_0 \wedge (- k\phi_{k-1,0}^- -(p-1-k) \phi_{k+1,0}^-)\\
		d\phi_{k,0}^\zeta&=\theta_0 \wedge (k\phi_{k-1,0}^\zeta -(p-1-k) \phi_{k+1,0}^\zeta),
		\end{align*}
		where $\theta_0$ is the contact 1-form defined by the pseudo-Riemannian metric.
		
		Now take $M=S^{p-1,1}$ and assume $\omega\in\Omega^{\dim M-1}(\mathbb P_M)$, and $d\omega=\theta_0\wedge \omega'$. Let $\nu:\partial R^{p-1,1}(\theta)\to\mathbb P_M$ be the outer normal map, and extend it smoothly to $M$. We then have
		\begin{align*} 
		\frac{d}{d\theta}[[{ 0,}\omega]](R^{p-1,1}(\theta)) & =\frac{d}{d\theta} \left\langle \omega, \llbracket N^*R^{p-1,1}(\theta) \rrbracket \right\rangle \\
		& = \frac{d}{d\theta} \left\langle \nu^*\omega, \llbracket \partial R^{p-1,1}(\theta) \rrbracket \right\rangle\\
		&=\frac{d}{d\theta} \left\langle \nu^*\theta_0\wedge \nu^*\omega', \llbracket R^{p-1,1}(\theta) \rrbracket \right\rangle \\
		& =\left\langle \nu^*\theta_0\wedge \nu^*\omega', \frac{\partial}{\partial \theta}\cdot \llbracket \partial R^{p-1,1}(\theta) \rrbracket \right\rangle\\
		& = \left\langle  \nu^*\theta_0, \frac{\partial}{\partial \theta} \right\rangle \cdot \left\langle \nu^*\omega',  \llbracket \partial R^{p-1,1}(\theta) \rrbracket \right\rangle \\
		& =\frac{d\rho}{d\theta} \cdot [[0,\omega']] (R^{p-1,1}(\theta)).
		\end{align*}
		
		Hence $\frac{d}{d\theta}[[0,\phi^-_{k,0}]] (R^{p-1,1}(\theta))$ equals
		\begin{align*}
		\frac{1+\tan^2\theta}{1-\tan^2\theta}\left(- k[[0,\phi_{k-1,0}^-]] (R^{p-1,1}(\theta)) -(p-1-k) [[0,\phi_{k+1,0}^-]] (R^{p-1,1}(\theta))\right),\end{align*}
		and similarly $\frac{d}{d\theta}[[0,\phi_{k,0}^\zeta]] (T_\zeta^{p-1,1}(\theta))$ is
		\begin{align*}\sqrt\xi \frac{1+\tan^2\theta}{1+\xi\tan^2\theta} \left(k[[0,\phi_{k-1,0}^\zeta]](T_\zeta^{p-1,1}(\theta)) -(p-1-k) [[0,\phi_{k+1,0}^\zeta]] (T_\zeta^{p-1,1}(\theta))\right).
		\end{align*}

		It follows by induction on $k=p,\dots,0$ that $[[0,\phi_{k,0}^\zeta]](T_\zeta^{p-1,1}(\theta))$ is holomorphic in $\zeta\in U_\C$ and 
		\begin{equation}\label{eq:induction_analytic}
		\lim_{\zeta\to 0} [[0,\phi_{k,0}^\zeta]](T_{\zeta}^{p-1,1}(\theta))=\mathbf i^{p-1-k} [[0,\phi_{k,0}^-]](R^{p-1,1}(\theta)).
		\end{equation}
		By \eqref{eq:mu_k_expansion} and \eqref{eq:mu_k_expansion2} this completes the proof.
	\end{proof}

	In order to normalize the leading coefficient in the Crofton formulas we rescale the measures $m_k, \check m_k$ as follows.
	\begin{Definition} \label{def_normailzation}
		Let $M\subset \R^{p,q}$ be the pseudosphere of curvature $\sigma>0$, or the pseudohyperbolic space of curvature $\sigma<0$. We define 
 		\begin{displaymath}
 		 \Cr_k^M={\pi\omega_{k-1}}\sqrt{\sigma^{-1}}^k\Cr_M(m_k).
 		\end{displaymath}
		In the flat pseudo-Euclidean space $M=\R^{p,q}$ we take
		\begin{displaymath}
		 {\Cr_k^M={\pi\omega_{k-1}}\Cr_M(\check m_k).}
		\end{displaymath}
 	\end{Definition}
	
	\begin{Theorem}[Crofton formula]\label{thm_crofton_formula} Let $M$ be a pseudosphere, a pseudohyperbolic space or a pseudo-Euclidean space. Then, independently of the signature of $M$,
 		\begin{equation} \label{eq:crofton}
		{\Cr_k^M= \sum_{j=0}^{\lfloor\frac{n-k}{2}\rfloor}\frac {\omega_{k-1}}{\omega_{k+2j-1}} {-\frac k 2 \choose j}\sigma^{j} \mu_{k+2j}}
		\end{equation}
		where $\sigma$ is the sectional curvature of $M$ and $n$ its dimension.
	\end{Theorem}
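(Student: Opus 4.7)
The strategy combines the Hadwiger-type classification of \cite{bernig_faifman_solanes_part2} with an analytic continuation argument in the parameter $\zeta$: starting from the classical spherical Crofton formula (valid in the Riemannian regime $\zeta>\tfrac12$), I would transport the identity to arbitrary signature through the holomorphic family $\{m_k^\zeta\}_{\zeta\in U_{\mathbb C}}$ and its distributional boundary value at $\zeta=0$.

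First I would verify that the family $\{\Cr_k^M\}$ is compatible with isometric embeddings of space forms. For an isometric inclusion $e\colon M\hookrightarrow\widetilde M$ of pseudospheres (resp.\ pseudohyperbolic spaces), Proposition~\ref{prop:restrictions} gives $e^*m_k=m_k$ on the Grassmannian side, and Proposition~\ref{prop:restrictions_commute} converts this into $\Cr_k^{\widetilde M}|_M=\Cr_k^M$ on the valuation side. Combined with the Weyl principle for intrinsic volumes and the Hadwiger-type classification of Weyl-compatible valuations, this forces
\[
\Cr_k^M=\sum_{j\geq 0} c_j\,\sigma^j\,\mu_{k+2j}^M
\]
with universal scalars $c_j$, reducing the problem to computing $c_j$ in one signature.

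Next I would determine the $c_j$ in the positive-curvature case. For real $\zeta>\tfrac12$, $Q_\zeta$ is a Euclidean form, $S_\zeta$ is a Riemannian sphere of sectional curvature $1$, and $m_k^\zeta$ is the corresponding Haar probability on the Grassmannian; the classical spherical Crofton formula \eqref{eq_spherical_crofton} then immediately identifies $c_j=\frac{\omega_{k-1}}{\omega_{k+2j-1}}{-\tfrac{k}{2}\choose j}$. To confirm that these same coefficients govern genuine pseudo-Riemannian signatures, I would evaluate both sides of the Riemannian identity on the templates $T_\zeta^{p-1,s}$ for $s\in\{0,1\}$ and varying $\theta$, and take the distributional boundary limit as $\epsilon\to 0^+$ along $\zeta=\mathbf i\epsilon$: Proposition~\ref{prop:continuation_mu} shows the right-hand side extends holomorphically in $\zeta\in U_{\mathbb C}$ with limits $\mu_{k+2j}(R^{p-1,s})$, while Proposition~\ref{prop:weak_continuity} shows that the left-hand side converges to $\Cr_{S^{p-1,1}}(m_k)(R^{p-1,s})$. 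The template method then pins down the $c_j$ uniquely on $S^{p-1,1}$, in agreement with the universal value found in the Riemannian case.

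The pseudohyperbolic case is handled by a parallel argument with Riemannian hyperbolic space as the base. The flat pseudo-Euclidean case follows from the pseudospherical case via the homothety-at-infinity construction of Proposition~\ref{prop:derivative_crofton}, which extracts the $k$-th Taylor coefficient in the homothety parameter; the $\sigma^j$ factors collapse in the limit so that only the $j=0$ term survives. The main obstacle is the analytic continuation/boundary-value step: the Crofton identity, holomorphic in $\zeta\in U_{\mathbb C}$, must survive the distributional limit at $\zeta=0$, which requires the wave-front and normal H\"ormander-topology machinery developed in Section~\ref{sec:opq_distributions}, in particular Proposition~\ref{prop:boundary_value_grassmannian}, to ensure continuity of evaluation on the LC-regular templates.
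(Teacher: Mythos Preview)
Your overall architecture matches the paper's, but there is a real gap in your first reduction step. You write that Weyl compatibility together with the Hadwiger-type classification forces
\[
\Cr_k^M=\sum_{j\geq 0} c_j\,\sigma^j\,\mu_{k+2j}^M,
\]
i.e.\ an expansion in the intrinsic volumes alone. This is not what the classification of \cite{bernig_faifman_solanes_part2} gives you. On a fixed generalized pseudosphere, Theorem~C yields only that an isometry-invariant generalized valuation lies in the span of the $\mu_i$ \emph{and their complex conjugates} $\overline{\mu_i}$; and the ``Weyl-principle Hadwiger theorem'' you allude to concerns families defined on all pseudo-Riemannian manifolds, whereas $\Cr_k^M$ is only defined on space forms. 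Moreover the family $\overline{\mu_k}$ itself satisfies the Weyl principle, so no restriction argument alone can exclude it. The paper accordingly starts from
\[
\Cr_k^{M}=\sum_{j}\bigl(a_{j,p,q}\,\mu_{k+2j}+b_{j,p,q}\,\overline{\mu_{k+2j}}\bigr),
\]
uses Proposition~\ref{prop:restrictions} to show $a_{j,p,q},b_{j,p,q}$ are signature-independent, and then needs \emph{both} template families $R^{p-1,0}$ and $R^{p-1,1}$ precisely to separate the two pieces: since $\mu_{k+2j}(R^{p-1,s})\in\mathbf i^s\R$, the template identities give $c_j=a_j+b_j$ and $c_j=a_j-b_j$, whence $b_j=0$. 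You do invoke both templates in your second step, but only as a ``confirmation'' after already assuming the antiholomorphic part vanishes; in fact they are doing the essential work of killing the $b_j$. This is the point the paper flags in the introduction as the formulas being ``holomorphic'', and it requires argument.

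A second, smaller divergence: for the pseudohyperbolic case you propose a ``parallel argument with Riemannian hyperbolic space as the base''. The paper does not build a separate holomorphic family for $\sigma<0$; instead it transfers the already-proved pseudospherical formula through the anti-isometry $j\colon S^{q-1,p}\to H^{p,q-1}$ together with Lemma~\ref{lem:crofton_sign} ($j^*m_k=\overline{m_k}$) and the behaviour of $\mu_k$ under sign change of the metric. Your route may be viable but would require redoing all of Section~\ref{sec:opq_distributions} for a hyperbolic $Q_\zeta$-family, whereas the paper's anti-isometry trick is a two-line reduction. Your treatment of the flat case via Proposition~\ref{prop:derivative_crofton} matches the paper's.
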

	
	\begin{proof} Take first the pseudosphere $M=S^{p-1,q}$ of curvature $\sigma=1$. We can assume $q>0$ as the formula is known in $S^{n}$ (cf. e.g. \cite{fu_wannerer}). We know that
		\begin{displaymath}
		\Cr_k^{M}=\sum_{j=0}^{\lfloor\frac{n-k}2\rfloor} (a_{j,p,q}\mu_{k+2j}+b_{j,p,q}\overline{\mu_{k+2j}})
		\end{displaymath}
		for certain coefficients $a_{j,p,q},b_{j,p,q}\in\C$. Indeed, by \cite[Theorem C]{bernig_faifman_solanes_part2} we may express $\Cr_k^{M}$ as a linear combination of the intrinsic volumes and their complex conjugates. Since both $\mu_r$ and $\Cr_r^M$ are the restrictions of elements in $\Val_r^{-\infty,+}$ and thus belong to the $(-1)^r$-eigenspace of the Euler-Verdier involution, only the displayed terms appear.
		
		Let $e\colon S^{p-1,q}{\hookrightarrow}S^{p-1+l,q}$ and $\tilde e\colon S^{p-1+l,1}{\hookrightarrow}S^{p-1+l,q}$ be standard inclusions.  By Proposition \ref{prop:restrictions}, we have
		\begin{align*}
		\sum_{j=0}^{\lfloor\frac{p-1+q-k}2\rfloor} a_{j,p,q}\mu_{k+2j}+b_{j,p,q}\overline{\mu_{k+2j}} &= \Cr_k^{S^{p-1,q}}=e^*(\Cr_k^{S^{p-1+l,q}})\\
		&=\sum_{j=0}^{\lfloor\frac{p-1+q-k}2\rfloor} a_{j,p+l,q}\mu_{k+2j}+b_{j,p+l,q}\overline{\mu_{k+2j}}\\
		\sum_{j=0}^{\lfloor\frac{p+l-k}2\rfloor} a_{j,p+l,1}\mu_{k+2j}+b_{j,p+l,1}\overline{\mu_{k+2j}}&= \Cr_k^{S^{p-1+l,1}}=\tilde e^*(\Cr_k^{S^{p-1+l,q}})\\
		&=\sum_{j=0}^{\lfloor\frac{p+l-k}2\rfloor} a_{j,p+l,q}\mu_{k+2j}+b_{j,p+l,q}\overline{\mu_{k+2j}}.
		\end{align*}
		
		By the linear independence of $\{\mu_{i}\}_i\cup\{\overline\mu_i\}_i$ \cite[Corollary 7.4]{bernig_faifman_solanes}, and taking $l\geq q-1$, this yields 
		\begin{align*}a_{j,p,q}&=a_{j,p+l,q}=a_{j,p+l,1},\\
		b_{j,p,q}&=b_{j,p+l,q}=b_{j,p+l,1} 
		\end{align*}
		for all $j\leq \frac{p-1+q-k}2$.  
		
		It suffices then to determine $a_j:=a_{j,p,1},b_j:=b_{j,p,1}$; i.e. to prove the statement in the de Sitter sphere $M=S^{p-1,1}$. To this end we evaluate both sides on the templates $R^{p-1,s}\subset M$ with $s=0,1$. In order to compute $\Cr_k^M(R^{p-1,s})$ we use the spherical Crofton formula:
		\begin{equation}\label{eq_crofton_zeta}
		\Cr_{S_\zeta}(\pi\omega_{k-1}m_k^\zeta)= \sum_{j\geq 0}\frac{\omega_{k-1}} {\omega_{k+2j-1}} {-\frac k 2 \choose j}\mu^{\zeta}_{k+2j}=:\sum_{j\geq 0}c_{j}\mu^{\zeta}_{k+2j},
		\end{equation}for $\zeta>\frac12$.
		Given $p\geq k$ and $s=0,1$, let $S^{p}$ be the unit sphere of an arbitrary Euclidean structure in $\R^{p,1}=\R^{p+1}$ and let $T^{p-1,s}$ be the radial projection on $S^{p}$ of $R^{p-1,s}$.  By Definition \ref{def_normailzation}, Proposition   \ref{prop:weak_continuity}, and applying analytic continuation to \eqref{eq_crofton_zeta} via Proposition \ref{prop:continuation_mu},  we have
		\begin{align*}
		\Cr_k^M(R^{p-1,s}) &={\pi \omega_{k-1}}\lim_{\epsilon\to 0^+} \Cr_{S^p}(m_k^{\mathbf{i}\epsilon})(T^{p-1,s})\\
		&= \lim_{\epsilon\to 0^+} \sum_{j\geq 0} c_j  f_{k+2j,s}(\mathbf{i}\epsilon)\\
		&= \sum_{j\geq 0} c_j \mu_{k+2j}(R^{p-1,s}).
		\end{align*}
		
		Now, for  $s=0,1$, taking $p=k+2l-s+1$ we get 
		\begin{align*}
		 \sum_{j\geq 0} c_j \mu_{k+2j}(R^{k+2l-s,s})&=\sum_{j\geq 0} a_j \mu_{k+2j}(R^{k+2l-s,s})+b_j \overline{\mu_{k+2j}(R^{k+2l-s,s})}\\
		&=\sum_{j\geq 0} (a_j+(-1)^{s}b_j) \mu_{k+2j}(R^{k+2l-s,s}),
		\end{align*}
		since $\mu_{k+2j}(R^{k+2l-s,s})\in \mathbf i^s\R$ by \eqref{eq:mu_k_expansion}.
		
		For $l=0$ we have $\mu_{k+2j}(R^{k -s,s})=0$ for all $j \geq 1$ and hence $ c_0=a_0+(-1)^{s}b_0$ for  $s=0,1$, and thus $a_0= c_0,b_0=0$. 
		Suppose that $a_j= c_j, b_j=0$ for all $j< j_0$. Taking $l=j_0$ we deduce
		$ c_{j_0}=a_{j_0}+(-1)^{s}b_{j_0}$  for  $s=0,1$. By induction we deduce $a_j= c_j$ and $b_j=0$ for all $j$, which completes the proof for $\sigma=1$.
		
		For $\sigma>0$ the theorem follows by the homogeneity of the $\mu_k$ (cf. \cite[Proposition 1.2. iii)]{bernig_faifman_solanes}) .

		Let us now turn to $\sigma=-1$, i.e. to $H^{p,q-1}\subset \R^{p,q}$. Note that the anti-isometry $j\colon \R^{q,p}\to \R^{p,q}$  of Lemma \ref{lem:crofton_sign} maps $S^{q-1,p}$ to $H^{p,q-1}$.  Therefore, by Lemma \ref{lem:crofton_sign} and the homogeneity of the $\mu_k$,
		\begin{align*}
		\Cr_k^{H^{p,q-1}}(j(A)) &=\pi\omega_{k-1}\mathbf i^k\int_{\Gr_{n+1-k}} \chi(E\cap j(A)) {dm_k(E)} \\
		&=\pi\omega_{k-1}\mathbf i^k\int_{\Gr_{n+1-k}} \chi(E\cap A) dj^*m_k(E)\\
		&=\mathbf i^k\overline{\Cr_k^{S^{{q-1,p} }}(A)}\\
		&=\mathbf i^k\sum_\nu c_{\nu} \overline{\mu_{k+2\nu}(A)}\\
		&=\mathbf i^k\sum_\nu c_{\nu} \mathbf i^{-k-2\nu}{\mu_{k+2\nu}(j(A))}.
		\end{align*}
		This proves the statement for $\sigma=-1$. The case $\sigma<0$ follows as before from the homogeneity of the $\mu_k$.
				
		Finally we consider the case $\sigma=0$. Let us identify $M=\R^{p-1,q}$ with the tangent space $T_x S^{p-1,q}$ at some $x\in S^{p-1,q}$.
		Let $\Lambda_k^x\colon \mathcal V^{-\infty}(S^{p-1,q})^{\OO(p,q)}\to \Val^{-\infty}(T_x S^{p-1,q})^{O(p-1,q)}$ be given by (cf. \cite[Proposition 3.1.5]{alesker_val_man1})
		\begin{displaymath}
		\Lambda_k^x(\varphi)=\frac{1}{k!}\left. \frac{d^k}{dt^k}\right|_{t=0} h_t^*\phi^*\varphi,
		\end{displaymath}
		where $\phi\colon U\subset T_x S^{p-1,q}\to S^{p-1,q}$ is defined on a neighborhood of $x$ by $$\phi(w)=Q(x+w)^{-\frac12}(x+w)$$ and $h_t(w)=tw$. By Proposition \ref{prop:derivative_crofton}  we have
		\begin{displaymath}
		\Lambda_k^x \Cr_k^{S^{p-1,q}}=\Cr_k^{\R^{p-1,q}}. 
		\end{displaymath}
		
		On the other hand, denoting by $g$  the metric on $S^{p-1,q}$, since $\mu_k\in\mathcal W_k^{-\infty}$, behaves naturally with respect to isometries and is $k$-homogeneous, we have
		\begin{align*}
		\Lambda_k^x \mu_k^g&= \lim_{t\to 0} t^{-k}(\phi\circ h_t)^*\mu_k^g=\lim_{t\to 0} \mu_k^{(\phi\circ h_t)^*g/t^2}.
		\end{align*}
		Since $(\phi\circ h_t)^*g/t^2$ converges, $C^\infty$-uniformly on compact sets, to the flat metric $g_0$, we conclude by \cite[Proposition 1.2 ii)]{bernig_faifman_solanes} that
		\begin{displaymath}
		\Lambda_k^x \mu_k^g =\mu_k^{g_0}.
		\end{displaymath}
		
		Applying $\Lambda_k^x$ to both sides of \eqref{eq:crofton} the case $\sigma=0$ follows.
	\end{proof}
	
	Recall from \cite{bernig_faifman_solanes} that the intrinsic volumes $\mu_k$  were defined  in terms of certain generalized curvature measures $C_{k,p}^0,C_{k,p}^1$. On a manifold of constant curvature $\sigma$, these fulfill $C_{k,p}^i=\sigma^p C_{k,0}^i$. Using this and \cite[Eq. (61)]{bernig_faifman_solanes}, the Crofton formula \eqref{eq:crofton} becomes
	\begin{displaymath}
	 \Cr_k^M =\mathbf i^q \sum_j d_{k,j}\sigma^j \glob(C_{k+2j,0}^0+\mathbf i C_{k+2j,0}^1),
	\end{displaymath}where $\glob:\mathcal C^{-\infty}(M)\to \mathcal V^{-\infty}(M)$ is the globalization map (cf. \cite[Section 2]{bernig_faifman_solanes}). The constants $d_{k,j}$ are independent of the signature and the curvature and can thus be deduced from the case of Euclidean spheres. Therefore, by \cite[\S3.2]{fu_wannerer} we obtain
	\begin{equation}\label{eq:Cr_to_C}
	 \Cr_k^M =\frac{\pi^k}{k!\omega_k}\mathbf i^q \sum_j \left(\frac{\sigma}4\right)^j \glob(C_{k+2j,0}^0+\mathbf i C_{k+2j,0}^1).
	\end{equation}
	
	\begin{Remark}
	It is interesting to note that \eqref{eq:Cr_to_C} yields
	\begin{displaymath}
	  \chi -\frac{\sigma}{2\pi} \Cr_2^M = \mathbf i^q \glob(C_{0,0}^0 + \mathbf i C_{0,0}^1),
	 \end{displaymath}
	 which can be seen as a generalization of the fact that the angular excess of a spherical triangle is proportional to its area.
	\end{Remark}

	\def\cprime{$'$}

%\bibliographystyle{plain}
%\bibliography{../biblio}

\end{document}